\newtheorem{theorem}{Theorem}[section]
\newtheorem{lemma}[theorem]{Lemma}
\newtheorem{proposition}[theorem]{Proposition}
\newtheorem{corollary}[theorem]{Corollary}
\newtheorem{assumption}[theorem]{Assumption}
\newtheorem{question}[theorem]{Question}
\theoremstyle{definition}
\newtheorem{definition}[theorem]{Definition}
\theoremstyle{remark}
\newtheorem{remark}[theorem]{Remark}
\DeclareMathOperator{\A}{\mathbb{A}}
\DeclareMathOperator{\C}{\mathbb{C}}
\DeclareMathOperator{\N}{\mathbb{N}}
\DeclareMathOperator{\Q}{\mathbb{Q}}
\DeclareMathOperator{\R}{\mathbb{R}}
\DeclareMathOperator{\bT}{\mathbb{T}}
\DeclareMathOperator{\Z}{\mathbb{Z}}
\newcommand{\cA}{\mathcal{A}}
\newcommand{\cC}{\mathcal{C}}
\newcommand{\cD}{\mathcal{D}}
\newcommand{\cE}{\mathcal{E}}
\newcommand{\cF}{\mathcal{F}}
\newcommand{\cH}{\mathcal{H}}
\newcommand{\cI}{\mathcal{I}}
\newcommand{\cK}{\mathcal{K}}
\newcommand{\K}{\mathcal{K}}
\newcommand{\cM}{\mathcal{M}}
\newcommand{\cO}{\mathcal{O}}
\newcommand{\roi}{\mathcal{O}}
\newcommand{\cR}{\mathcal{R}}
\newcommand{\cU}{\mathcal{U}}
\newcommand{\cV}{\mathcal{V}}
\newcommand{\cW}{\mathcal{W}}
\newcommand{\ff}{\mathfrak{f}}
\newcommand{\fl}{\mathfrak{l}}
\newcommand{\fm}{\mathfrak{m}}
\newcommand{\fn}{\mathfrak{n}}
\newcommand{\fp}{\mathfrak{p}}
\newcommand{\fq}{\mathfrak{q}}
\newcommand{\fr}{\mathfrak{r}}
\newcommand{\gn}{\mathfrak{n}}
\newcommand{\gp}{\mathfrak{p}}
\newcommand{\gf}{\mathfrak{f}}
\newcommand{\gq}{\mathfrak{q}}
\newcommand{\gm}{\mathfrak{m}}
\newcommand{\gr}{\mathfrak{r}}
\newcommand{\matrixxx}[2]{\left(\begin{matrix}
#1\\
#2
\end{matrix} \right)}
\newcommand{\smallmatrixx}[4]{\big(\begin{smallmatrix}
#1 & #2\\
#3 & #4
\end{smallmatrix}\big)}
\newcommand{\matrixx}[4]{\begin{pmatrix}
#1 & #2\\
#3 & #4
\end{pmatrix}}
\newcommand{\tensorspace}{\roi_{K,p}}
\begin{document}

\title[The Bianchi eigenvariety around non-cuspidal points]{Geometry of the Bianchi eigenvariety around non-cuspidal points and strong multiplicity-one results. 
}


\author{Daniel Barrera Salazar}
\address{}
\curraddr{}
\email{}
\thanks{}

\author{Luis Santiago Palacios}

\keywords{CM modular forms, base change, non-cuspidal Bianchi modular forms, Bianchi Eisenstein eigensystem, Bianchi eigenvariety}

\date{}

\begin{abstract}
Let $K$ be an imaginary quadratic field. In this article, we study the local geometry of the Bianchi eigenvariety around non-cuspidal classical points, in particular, ordinary non-cuspidal base change points. 

To perform this study we introduce Bianchi Eisenstein eigensystems and prove strong multiplicity-one results on the cohomology of the corresponding Bianchi threefolds. We believe these results are of independent interest. 
\end{abstract}

\maketitle

\tableofcontents 

\section{Introduction}

Eigenvarieties are rigid analytic spaces that parametrize $p$-adic automorphic representations. A classical point is a point attached to a usual automorphic representation. The study of an eigenvariety around classical points has shown important arithmetic applications and has been carried out in several settings, as for example \cite{GS93}, \cite{skinner2014iwasawa}, \cite{betina2022failure}, \cite{wiles1990iwasawa}, \cite{bertolini2022heegner}, \cite{loeffler2020bloch}, \cite{boxer2021abelian}, \cite{kisin2003overconvergent}.

 It seems that in most of the literature where the geometry around classical points has been studied the underlying locally symmetric space is a Shimura variety. When there is no a Shimura variety structure, less tools are available and therefore the local geometry of the corresponding eigenvariety becomes much harder to study. The first natural setting where this happens is the context of Bianchi manifolds, which are hyperbolic three manifolds attached to Bianchi groups. The geometry of Bianchi eigenvarieties
 around classical points attached to  \emph{cuspidal} automorphic representations seems to hide deep mysteries (see for example \cite{BW21}, \cite{rawson2024computing}, \cite{serban2022finiteness}, \cite{lee2023p}) and to be connected to important arithmetic questions as explained in \cite{loeffler2021birch}.

This work is devoted to the study of the geometry of Bianchi eigenvarieties around classical points attached to  \emph{non-cuspidal} automorphic representations. To perform this study we prove multiplicity-one results in cohomology which are interesting by themselves. Their strong versions seem to be new in the literature. Thus, our work is focused on two type of results: 
\begin{itemize}
    \item strong multiplicity-one theorems, and
    \item the geometry of the Bianchi eigenvariety.
\end{itemize}

We would like to stress the opposite behavior of $p$-adic families in the Hilbert and Bianchi setting. For example, in the Hilbert setting, where the underlying totally real number field is of degree $2$ over $\Q$, the cuspidal families are $2$-dimensional and Eisenstein families are $1$-dimensional. In the Bianchi setting exactly the opposite is true. Developing this opposite behavior was the main motivation of this work. 

Furthermore, we expect analogue phenomena to the Bianchi situation would be possible to develop for other reductive groups $G$ over $\Q$ such that $G(\R)$ does not have discrete series.
 
\subsection{Strong multiplicity-one}

In his thesis, Weisinger \cite{weisinger1977some} developed a newform theory for elliptic modular Eisenstein series, proving in particular, a strong multiplicity-one theorem. This multiplicity-one theorem was later improved by Rajan \cite{rajan2000refinement}, followed by a further refinement of Thompson and Linowitz \cite{linowitz2015sign}, who showed that Eisenstein newforms are uniquely determined by their Hecke eigenvalues for any set of primes having Dirichlet density greater than $1/2$.

The newform theory developed by Weisinger, was later extended to the Hilbert modular setting by Wiles \cite{wiles1986p} and by Atwill and Linowitz \cite{atwill2013newform}. In particular, \cite{atwill2013newform} proves that Hilbert Eisenstein newforms, just like elliptic modular Eisenstein series, are uniquely determined by their Hecke eigenvalues for any set of primes having Dirichlet density greater than $1/2$.

One of our main goals is to prove analogous results in the Bianchi setting and working with the cohomology of the corresponding Bianchi threefold. We put our attention on \emph{Bianchi Eisenstein eigensystems}, which are systems of Hecke-eigenvalues appearing in the Eisenstein part of the cohomology of a Bianchi threefold. 

Let $K$ be an imaginary quadratic field. Let $\phi= (\phi_1, \phi_2)$ be a couple of Hecke characters of $K$ with $\phi_1,\phi_2$ having conductors $\gn_1$, $\gn_2$ respectively, such that $(\gn_1,\gn_2)=1$ and let $Y_{\K_1(\gn)}$ be the Bianchi threefold of level $\gn=\gn_1\gn_2$ (see \ref{ss: classical cohomology}). Let $S$ be a set of primes of $K$ not dividing $\gn$ and consider the following ideal of the Hecke algebra $\cH_{\fn}\otimes \C$ (see def. \ref{d: algebras de hecke}):
$$\gm_{\phi, S}= (\{T_\gq-\phi_1(\gq)^{-1}-N(\fq)\phi_2(\gq)^{-1},T_{\gq,\gq}-\phi_1(\gq)^{-1}\phi_2(\gq)^{-1} : \gq\in S\}).$$ 

The following theorem produces explicit Bianchi Eisenstein eigensystems of level $\K_1(\gn)$ and weight $(k,\ell)$ for $k, \ell\in \N$. 

\begin{theorem}{\label{t: dim 1 intro}} Suppose that $S$ has Dirichlet density greater than $1/2$. If $\phi$ has infinity type $[(k+1,0),(-1,\ell)]$ (resp. $[(k+1,\ell+1), (-1,-1)]$ and $(k, \ell)\neq (0,0)$), then $$\mathrm{dim}_{\C}H_{Eis}^1(Y_{\K_1(\gn)},\cV_{k,\ell}^*(\C))[[\gm_{\phi,S}]]=1\;(\mathrm{resp.}\;0),$$
$$\mathrm{dim}_{\C}H_{Eis}^2(Y_{\K_1(\gn)},\cV_{k,\ell}^*(\C))[[\gm_{\phi,S}]]=0\;(\mathrm{resp.}\;1),$$
here $\cV_{k,\ell}^*(\C)$ is the locally constant sheaf attached to the polynomials $V_{k, \ell}^{*}(\C)$ (see \ref{ss: classical cohomology}).
\end{theorem}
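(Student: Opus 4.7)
The plan is to combine an explicit description of the Bianchi Eisenstein eigensystems contributing to each cohomological degree with a strong multiplicity-one theorem for Hecke characters of $K$. The argument splits into parametrisation, existence, and uniqueness.

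\emph{Parametrisation and existence.} Since $B$ is (up to conjugacy) the unique proper parabolic subgroup of $\mathrm{GL}_2/K$, the Eisenstein part $H^*_{Eis}(Y_{\K_1(\gn)},\cV^*_{k,\ell}(\C))$ admits a Harder-type (or Franke-type) decomposition indexed by pairs $(\chi_1,\chi_2)$ of Hecke characters of $K$ whose conductors divide $\gn$. A computation of the $(\fg,K_\infty)$-cohomology of the parabolically induced representations $\mathrm{Ind}_B^{\mathrm{GL}_2}(\chi_1,\chi_2)$ against the coefficient system $V_{k,\ell}^*(\C)$ shows that pairs of infinity type $[(k+1,0),(-1,\ell)]$ contribute a one-dimensional Hecke eigenspace to $H^1_{Eis}$ and nothing to $H^2_{Eis}$, while pairs of infinity type $[(k+1,\ell+1),(-1,-1)]$ contribute a one-dimensional Hecke eigenspace to $H^2_{Eis}$ and nothing to $H^1_{Eis}$; the regularity hypothesis $(k,\ell)\neq(0,0)$ in the second case rules out degenerate residual $L^2$-Eisenstein contributions. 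In either case the Hecke eigenvalues are exactly those defining $\gm_{\phi,S}$, so the pair $\phi$ itself produces an explicit non-zero eigenclass in the prescribed cohomological degree matching $\gm_{\phi,S}$, yielding the lower bound of one in the claimed dimensions.

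\emph{Uniqueness.} Let $\psi=(\psi_1,\psi_2)$ be any other pair contributing to the cohomological degree in question whose eigenvalues coincide with those of $\phi$ on every $\gq\in S$. The equalities of $T_\gq$- and $T_{\gq,\gq}$-eigenvalues force the multiset identity
\[
\{\phi_1(\gq)^{-1},\,N\gq\cdot\phi_2(\gq)^{-1}\}=\{\psi_1(\gq)^{-1},\,N\gq\cdot\psi_2(\gq)^{-1}\},
\]
so that $S=A\cup B$ with $A=\{\gq\in S:\phi_1(\gq)=\psi_1(\gq)\}$ and $B=\{\gq\in S:\psi_2(\gq)=N\gq\cdot\phi_1(\gq)\}$. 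A direct infinity-type computation (yielding, for instance, $(-k-3,\ell-1)$ in the first case of the theorem) shows that the Hecke character $\psi_2\phi_1^{-1}N^{-1}$ has non-trivial infinity type in every scenario under consideration, hence has infinite order. By the prime number theorem for Hecke $L$-functions (equivalently, the non-vanishing of $L(s,\psi_2\phi_1^{-1}N^{-1})$ on $\mathrm{Re}(s)=1$), the set $B$ has Dirichlet density zero, and therefore $d(A)>1/2$. When $\psi$ has the same infinity type as $\phi$, the character $\phi_1\psi_1^{-1}$ has trivial infinity type, hence is of finite order, and the strong multiplicity-one theorem for Hecke characters on sets of density $>1/2$ forces $\phi_1=\psi_1$; matching $T_\gq$-eigenvalues then yields $\phi_2=\psi_2$. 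When $\psi$ has the \emph{other} infinity type (the ``cross-degree'' case, which underlies the vanishing claims), the analogous computation shows that $\phi_1\psi_1^{-1}$ also has non-trivial infinity type, so $d(A)=0$; combining with $d(B)=0$ contradicts $d(S)>1/2$, and no such $\psi$ exists.

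The main obstacle I anticipate is the parametrisation step. Without a Shimura variety structure on $Y_{\K_1(\gn)}$, identifying the Hecke eigensystems in each cohomological degree requires a careful $(\fg,K_\infty)$-cohomology calculation together with control over possible residual or Speh-type contributions; the assumption $(k,\ell)\neq(0,0)$ in the second case is precisely what keeps this analysis clean. Once the parametrisation is in hand, the uniqueness step is essentially formal, depending only on the strong multiplicity-one theorem for Hecke characters and on the infinity-type rigidity used above.
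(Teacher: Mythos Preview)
Your approach is essentially the same as the paper's: decompose $H^i_{Eis}$ via Harder's theorem (stated there as Theorem~\ref{Eichler-Shimura}) as a sum of $V_{\chi_f}^{\K_1(\gn)}$ over characters $\chi$ of the appropriate infinity type, then run a density-$>1/2$ multiplicity-one argument for Hecke characters to show only $\chi=\phi$ survives---the paper packages the latter as its Theorem~\ref{t: dim1borde} and then applies it in Corollary~\ref{c: newformseisenstein}, while you invoke equidistribution/PNT for Hecke $L$-functions where the paper uses Rajan's criterion plus orthogonality on the ray class group, but these are interchangeable here.

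Two small points to tighten. First, your assertion that each pair ``contributes a one-dimensional Hecke eigenspace'' is only true at newform level: if the conductor of $\chi$ is strictly smaller than $\gn$, the space $V_{\chi_f}^{\K_1(\gn)}$ can be larger. This is harmless because your uniqueness step shows the only surviving pair is $\phi$ itself, whose conductors are exactly $\gn_1,\gn_2$ with $\gn_1\gn_2=\gn$ by hypothesis, and there Lemma~\ref{l: eigen} gives dimension one. Second, in the first case with $(k,\ell)=(0,0)$ your parametrisation of $H^2_{Eis}$ is not available directly (Harder's description of $H^2_{Eis}$ in Theorem~\ref{Eichler-Shimura} requires $(k,\ell)\neq(0,0)$); the paper handles this residual case by instead using the injection $H^2_{Eis}\hookrightarrow H^2(\partial\overline{Y}_{\K_1(\gn)})$ together with the boundary computation (Proposition~\ref{p: igualdadboundary}), which immediately gives the vanishing.
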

\begin{remark}
\begin{enumerate}
    \item Note that $H_{Eis}^0(Y_{\K_f},\cV_{k,\ell}^*(\C))=0$ unless $k=\ell=0$. In this case there is a contribution from those $\phi$ having infinity type $[(1,1), (-1,-1)]$, and this was excluded in the theorem. Moreover, $H_{Eis}^3(Y_{\K_f},\cV_{k,\ell}^*(\C))=0$ for every $k, \ell\in \N$. 
    \item Combining this theorem with \cite{ramakrishnan1994refinement} we obtain results for compact and full singular cohomology but with the stronger hypothesis of Dirichlet density greater than $7/8$.
\end{enumerate}
\end{remark}

\subsection{Results about the Bianchi Eigenvariety}

Henceforth, we fix a prime $p\nmid \fn$ that splits in $K$ as $\gp\overline{\gp}$ and we fix an embedding $\iota_p:\overline{\Q}\hookrightarrow\overline{\Q}_p$ such that $\gp$  corresponds to $\iota_p$. Remark that $\iota_p$ determines a valuation $v_p$ on $\overline{\Q}_p$.

In order to $p$-adically study our Bianchi Eisenstein eigensystems we consider their $p$-stabilizations. More precisely, let $\phi=(\phi_1,\phi_2)$ be a character as in Theorem \ref{t: dim 1 intro}, which gives us a Bianchi Eisenstein eigensystem of level $\K_1(\gn)$ and weight $(k,\ell)$. Denote by  $\alpha_\gq=\phi_1(\gq)^{-1}$ and $\beta_{\gq}=N(\fq)\phi_2(\gq)^{-1}$ the roots of the Hecke polynomial of $\phi$ at $\gq|p$, then a $p$-stabilization of $\phi$ is a couple $\tilde{\phi}= (\phi, (x_{\fq})_{\fq\mid p})$ where $x_{\fq}\in \{\alpha_\gq, \beta_{\gq}\}$ for each $\fq\mid p$. Let $L/\Q_p$ be a finite extension containing the values of $\tilde{\phi}$, we consider the following ideal of $\cH_{\fn, p}\otimes_{\Z}L$ (see def. \ref{d: algebras de hecke}):
$$\gm_{\tilde{\phi}}:=(\{T_\gq-\phi_1(\gq)^{-1}-N(\fq)\phi_2(\gq)^{-1},T_{\gq,\gq}-\phi_1(\gq)^{-1}\phi_2(\gq)^{-1} : \gq\nmid\gn p\} \cup  \{U_\gq-x_\gq:\gq|p\}).
$$

\begin{theorem}\label{t: dim 1 intro p-adic}
Let $\tilde{\phi}$ as above, and suppose $\phi$ has infinity type $[(k+1,0),(-1,\ell)]$ (resp. $[(k+1,\ell+1), (-1,-1)]$ and $(k,\ell)\neq (0, 0)$) then:
$$\mathrm{dim}_{L}H^1(Y_{\K_1(\gn, p)},\cV_{k,\ell}^*(L))_{\fm_{\tilde{\phi}}}=\mathrm{dim}_{L}H_c^2(Y_{\K_1(\gn, p)},\cV_{k,\ell}^*(L))_{\fm_{\tilde{\phi}}}=1 \text{ (resp. $0$)} $$ 
$$\mathrm{dim}_{L}H^2(Y_{\K_1(\gn, p)},\cV_{k,\ell}^*(L))_{\fm_{\tilde{\phi}}}=\mathrm{dim}_{L}H_c^1(Y_{\K_1(\gn, p)},\cV_{k,\ell}^*(L))_{\fm_{\tilde{\phi}}}= 0 \text{ (resp. $1$)}. $$
\end{theorem}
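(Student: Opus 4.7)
The plan is to bootstrap the $p$-stabilized statement at level $\K_1(\gn, p)$ from Theorem~\ref{t: dim 1 intro} at level $\K_1(\gn)$ via a standard degeneracy-map and $p$-stabilization argument. A first, essentially formal step is to upgrade Theorem~\ref{t: dim 1 intro} by replacing $\gm_{\phi, S}$ with the finer ideal $\gm_{\phi}$ imposing the Hecke relations at \emph{every} $\gq \nmid \gn$. Since $S$ has Dirichlet density $> 1/2$, strong multiplicity-one for Hecke characters of $K$ ensures that the pair $(\phi_1, \phi_2)$ is uniquely determined by its Hecke eigenvalues at $S$, so the $\gm_{\phi,S}$- and $\gm_\phi$-generalized eigenspaces coincide; in particular $\dim_\C H^1_{Eis}(Y_{\K_1(\gn)}, \cV_{k,\ell}^*(\C))[[\gm_\phi]] = 1$ in the first infinity-type case (with the complementary pattern in degree $2$).

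Next, at each prime $\gq \mid p$, pulling back along the two natural degeneracy maps $\pi_1, \pi_2 \colon Y_{\K_1(\gn, \gq)} \to Y_{\K_1(\gn)}$ (the inclusion and a twist by $\mathrm{diag}(\varpi_\gq, 1)$) yields a Hecke-equivariant injection of two copies of the $\gm_\phi$-generalized eigenspace at level $\gn$ into the cohomology at level $\K_1(\gn, \gq)$. On this rank-two image the operator $U_\gq$ acts with characteristic polynomial equal to the Hecke polynomial of $\phi$ at $\gq$, namely $(X - \alpha_\gq)(X - \beta_\gq)$; because $\alpha_\gq \neq \beta_\gq$ in the generic case the action is semi-simple. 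Iterating at $\gp$ and $\overline{\gp}$ and using that $U_\gp, U_{\overline{\gp}}$ commute, the $\gm_\phi$-isotypic component at level $\K_1(\gn, p)$ splits into four simultaneous eigenlines for $(U_\gp, U_{\overline{\gp}})$, one for each $p$-stabilization $\tilde{\phi}$. This produces the desired one-dimensional $\fm_{\tilde{\phi}}$-eigenspace in $H^1$ in the first case of the theorem (and forces the vanishing in $H^2$).

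It remains to verify an upper bound: the $\fm_{\tilde{\phi}}$-part of the \emph{full} cohomology at level $\K_1(\gn, p)$ must be exhausted by these degeneracy images. Any contributing class has the same tame eigensystem as $\phi$, so by the upgraded Theorem~\ref{t: dim 1 intro} it is either the pullback of an Eisenstein class of level $\K_1(\gn)$ (already accounted for) or comes from a Bianchi newform of strictly larger conductor; the latter possibility is excluded because the $U_p$-eigenvalue conditions cannot match those of a $p$-stabilization of $\phi$, and in particular no cuspidal class can contribute by strong multiplicity-one for $\mathrm{GL}_2/K$. For the compactly supported cohomology, I would invoke Poincar\'e duality on the oriented real 3-manifold $Y_{\K_1(\gn, p)}$, which gives $H^i_c(Y, \cV) \cong H^{3-i}(Y, \cV^\vee)^\vee$, swapping $H^1 \leftrightarrow H^2_c$ and interchanging the two infinity-type cases of the theorem via passage to the contragredient character. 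The main obstacle is this upper-bound step: cleanly separating the Eisenstein contribution at level $\K_1(\gn, p)$ from potential cuspidal or higher-conductor contaminants, for which the density-$>1/2$ strong multiplicity-one input from Theorem~\ref{t: dim 1 intro} is indispensable.
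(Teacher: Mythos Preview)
Your route differs from the paper's in two places, and one of them carries a genuine imprecision.

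For the full cohomology $H^i$ at level $\K_1(\gn,p)$, the paper does not bootstrap from level $\K_1(\gn)$ via degeneracy maps. It first reduces from $L$- to $\C$-coefficients through a common number-field rational structure (and from the localization at $\fm_{\tilde\phi}$ to the generalized eigenspace), and then reruns the entire argument of Theorem~\ref{t: main theorem} directly at Iwahori level. Harder's description (Proposition~\ref{t: boundary thm}, Theorem~\ref{Eichler-Shimura}) expresses $H^i_{Eis}$ and $H^i_\partial$ at level $\K_1(\gn,p)$ as sums of Iwahori-fixed vectors $V_{\chi_f}^{\K_1(\gn,p)}$, and Corollaries~\ref{c: p-estab}, \ref{c: dim1borde}, \ref{c: igualdadboundary}, \ref{c: newformeisenstein p-estab} compute those from scratch: for each $\gq\mid p$ the Iwahori invariants of the relevant induced representation are $2$-dimensional with $U_\gq$ acting semisimply with eigenvalues $\alpha_\gq,\beta_\gq$. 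This bypasses your upper-bound step entirely, since nothing is pulled back from lower level and no new/old dichotomy is invoked. Your degeneracy-map picture is morally the same computation in different language and can be completed, but the direct Iwahori computation is cleaner.

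For compact support, the paper does \emph{not} use Poincar\'e duality. It localizes the long exact sequence
\[
0\to H^0_\partial\to H^1_c\to H^1\to H^1_\partial\to H^2_c\to H^2\to H^2_\partial\to 0
\]
at $\fm_{\tilde\phi}$ and reads off $\dim H^i_c$ from the already-known dimensions of $H^i$ and $H^i_\partial$ (Corollary~\ref{c: igualdadboundary}). Your duality proposal is where I see a real gap: Poincar\'e duality pairs $H^i_c(Y,\cV_{k,\ell}^*)$ with $H^{3-i}(Y,\cV_{k,\ell})$, and passing from $\cV_{k,\ell}$ back to $\cV_{k,\ell}^*$ involves a determinant twist that changes the Hecke eigensystem. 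Your assertion that this simply ``interchanges the two infinity-type cases via passage to the contragredient'' is not justified; one has to track exactly which maximal ideal $\fm_{\tilde\phi'}$ appears on the dual side and then separately verify that $\tilde\phi'$ again satisfies the hypotheses of the theorem with the expected dimension. The long-exact-sequence argument avoids all of this bookkeeping.
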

As the cohomology above is concentrated in degrees $1$ and $2$ this theorem gives a complete description of the appearance of the eigenvalues of $\tilde{\phi}$ in the singular and compactly supported cohomology.

Now we take $\tilde{\phi}= (\phi, \alpha_\gp, \beta_{\overline{\gp}})$ when $\phi$ has infinity type $[(k+1,0),(-1,\ell)]$ and $\tilde{\phi}= (\phi, \alpha_\gp, \alpha_{\overline{\gp}})$ when $\phi$ has infinity type $[(k+1,\ell+1), (-1,-1)]$ and $(k,\ell)\neq (0, 0)$. In both cases, these are ordinary $p$-stabilizations (see \ref{r: valuaciones general})

We consider the eigenvarieties $\cE$ (resp $\cE_c$) of tame level $K_1(\gn)$ constructed in \cite{hansen} using the full cohomology (resp. compact supported cohomology) which are endowed with finite maps $\cE\longrightarrow \cW$ (resp. $\cE_c\longrightarrow \cW$) to the $2$-dimensional weight space $\cW$.

\begin{theorem} We have:
\begin{enumerate}
    \item $\tilde{\phi}$ determines a point $x\in \cE$ and the weight map $\cE\longrightarrow \cW$ is etale at $x$.
    \item $\tilde{\phi}$ determines a point $x_c\in \cE_c$ and the weight map $\cE_c\longrightarrow \cW$ is etale at $x_c$.
    \item There exist an open affinoid $\cU\subset \cW$ containing the weight $(k,\ell)$ and a slope $h= (h_{\fp}, h_{\overline{\fp}})\in \R_{\geq 0}^2$ such that if we denote by $\cC \subset \cE_c$ the irreducible component over $\cU$ passing through $x_c$ and $\phi$ has infinity type $[(k+1,0),(-1,\ell)]$ (resp. $[(k+1,\ell+1), (-1,-1)]$ with $(k,\ell)\neq (0, 0)$) then $H^2_c(Y_{\K_1(\gn, p)}, \cD_{\cU})^{\leqslant h}\otimes\cO(\cC)$ (resp. $H^1_c(Y_{\K_1(\gn, p)}, \cD_{\cU})^{\leqslant h}\otimes\cO(\cC)$) is a free $\cO(\cC)$-module of rank $1$. 
\end{enumerate}
\end{theorem}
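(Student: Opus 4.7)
The plan is to deduce all three assertions from a local freeness statement for the slope-$\leqslant h$ overconvergent cohomology in a neighborhood of $w:=(k,\ell)$, combined with Hansen's construction of the eigenvarieties. First I would fix a small open affinoid $\cU\subset\cW$ containing $w$ and, using that $\tilde{\phi}$ is ordinary, choose a slope $h=(h_{\fp},h_{\overline{\fp}})\in\R_{\geq 0}^2$ adapted to $\cU$ so that the slope of $\tilde{\phi}$ is $\leqslant h$. By Hansen's spectral theory, the slope-$\leqslant h$ modules
$$M_{\ast}:=H^{\ast}(Y_{\K_1(\gn,p)},\cD_{\cU})^{\leqslant h},\qquad M_{c,\ast}:=H^{\ast}_c(Y_{\K_1(\gn,p)},\cD_{\cU})^{\leqslant h}$$
are finite projective over $\cO(\cU)$, and $\cE$ (resp.\ $\cE_c$) over $\cU$ is cut out by the image $T$ (resp.\ $T_c$) of the Hecke algebra in $\mathrm{End}_{\cO(\cU)}(M_{\ast})$ (resp.\ $\mathrm{End}_{\cO(\cU)}(M_{c,\ast})$).

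Next I would invoke the control theorem for overconvergent cohomology, which in the ordinary slope-zero situation provides canonical Hecke-equivariant isomorphisms
$$M_{\ast}\otimes_{\cO(\cU)}k(w)\cong H^{\ast}(Y_{\K_1(\gn,p)},\cV_{k,\ell}^{*}(L))^{\leqslant h}$$
and similarly for $M_{c,\ast}$. Localizing at $\fm_{\tilde{\phi}}$ and applying Theorem \ref{t: dim 1 intro p-adic}, the right-hand side is $L$ concentrated in a single cohomological degree (depending on the infinity type of $\phi$) and zero in all other degrees. This already produces the desired points $x\in\cE$ and $x_c\in\cE_c$.

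Now fix the unique degree in which the $\fm_{\tilde{\phi}}$-localization is nonzero. Since $T$ (resp.\ $T_c$) is finite over $\cO(\cU)$, the localization $M_{\fm_{\tilde{\phi}}}$ is finite flat over $\cO(\cU)_{\fm_w}$; its fibre at $w$ is one-dimensional, so by Nakayama it is cyclic, and flatness then forces it to be free of rank $1$. Consequently the Hecke algebra acts faithfully on this rank-one free module, forcing the natural map $\cO(\cU)_{\fm_w}\to T_{\fm_x}$ (resp.\ $\cO(\cU)_{\fm_w}\to (T_c)_{\fm_{x_c}}$) to be an isomorphism; this gives the étaleness in parts (1) and (2). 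For part (3), étaleness at $x_c$ implies that, after possibly shrinking $\cU$, the irreducible component $\cC$ through $x_c$ is cut out by an idempotent $e_{\cC}\in T_c$ with $e_{\cC}\cdot T_c\cong\cO(\cC)$; the corresponding summand $e_{\cC}\cdot M_{c,\ast}$ in the degree specified by the infinity type of $\phi$ gives exactly the $\cO(\cC)$-module of the statement, and it is free of rank $1$ by the local freeness already established.

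The main obstacle will be the passage from cyclicity to genuine freeness of $M_{\fm_{\tilde{\phi}}}$, which requires combining the projectivity of the slope-$\leqslant h$ overconvergent cohomology à la Hansen with a careful application of the control theorem at $w$. The ordinary hypothesis on $\tilde{\phi}$ is essential here: it ensures that slope $0$ is attained throughout a neighborhood of $w$, that classicality applies without additional small-slope conditions, and that the $\tilde{\phi}$-eigensystem is isolated inside the slope-$\leqslant h$ overconvergent cohomology.
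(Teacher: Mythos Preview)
Your overall strategy matches the paper's, but there is a genuine gap in the key technical step. You assert that Hansen's spectral theory makes the slope-$\leqslant h$ modules $M_{\ast}$ and $M_{c,\ast}$ \emph{finite projective} over $\cO(\cU)$, and you later rely on this when you write ``flatness then forces it to be free of rank $1$.'' This is not what Hansen's construction gives: what one obtains is that the slope-$\leqslant h$ cohomology is computed by a \emph{perfect complex} of $\cO(\cU)$-modules, not that each individual cohomology group is projective. For Bianchi threefolds the cohomology lives in two degrees, and in general neither $H^1$ nor $H^2$ of this complex is flat over $\cO(\cU)$. For the same reason, your displayed isomorphism $M_{\ast}\otimes_{\cO(\cU)}k(w)\cong H^{\ast}(Y_{\K_1(\gn,p)},\cV_{k,\ell}^{*}(L))^{\leqslant h}$ is not automatic: specialization of the perfect complex gives the right-hand side, but specialization of its cohomology only agrees with this up to a Tor spectral sequence.

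The paper closes exactly this gap by invoking \cite[Lemmas 2.9(i) and 2.11]{BDJ17}. The input is that, after localizing at $\fm_{\tilde\phi}$ and applying the control theorem, the cohomology of the specialized complex is concentrated in a single degree and is one-dimensional there (this is Corollary~\ref{c: dimension 1 de p-estabilizaciones}); those lemmas then deduce that the localized family cohomology is itself concentrated in that degree and free of rank $1$ over $\cO(\cU)_{(k,\ell)}$. Once this is in hand, the rest of your argument (the inclusion of the local Hecke algebra into the endomorphisms of a rank-one module forces \'etaleness; shrinking $\cU$ to delocalize for part (3), which the paper does via \cite[Lemma 2.10]{BDJ17}) goes through exactly as you outline. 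So the fix is to replace the unjustified projectivity claim by the perfect-complex/concentration argument from \cite{BDJ17}.
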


In (3) we consider $\leqslant h$ part of the cohomology with respect to the Hecke operators at $p$ and tensor product is over the image of the Hecke algebra in the endomorphism of the corresponding cohomology (see \ref{ss: eigenvarieties} for more details). Moreover, the same is true for $x$ but using the full cohomology on degree $1$ when $\phi$ has infinity type $[(k+1,0),(-1,\ell)]$, and the full cohomology on degree $2$ when $\phi$ has infinity type $[(k+1,\ell+1), (-1,-1)]$ with $(k,\ell)\neq (0, 0)$. To prove this theorem we use control theorems on the cohomology of Bianchi manifolds, the multiplicity-one results stated above, and the formalism developed in \cite{BDJ17}. 

An important source of  non-cuspidal Bianchi modular forms is the base change to $\mathrm{GL}_{2, K}$ of cuspidal automorphic representations of $\mathrm{GL}_{2, \Q}$. Our result then implies the etaleness of the ordinary $p$-stabilization. 

\begin{corollary}\label{c: BC intro}
Let $\varphi$ be a Hecke character of $K$ of infinity type $(-k-1,0)$ with $k\geqslant0$ and conductor $\gm$ coprime to $(p)$ satisfying $(\gm,\overline{\gm})=1$. Let $\mathcal{F}$ be the non-cuspidal Bianchi modular form arising from the base change to $K$ of the theta series of $\varphi$, and suppose that its level is exactly $\gm\overline{\gm}$. Then the ordinary $p$-stabilization of $\mathcal{F}$ determines a point $x\in \cE$ (resp $x_c\in \cE_c$) and the weight map $\cE\longrightarrow \cW$ (resp. $\cE_c\longrightarrow \cW$) is etale at $x$ (resp. $x_c$).
\end{corollary}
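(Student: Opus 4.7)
The strategy is to recognize $\mathcal{F}$ as the non-cuspidal Bianchi form attached to an explicit pair $\phi = (\phi_1, \phi_2)$ fitting into the first case of Theorem \ref{t: dim 1 intro}, and then to match its ordinary $p$-stabilization with the distinguished $\tilde{\phi}$ singled out in the paragraph preceding the eigenvariety theorem. Since $\theta_\varphi$ is the automorphic induction of $\varphi$ from $\mathrm{GL}_1(\bA_K)$ to $\mathrm{GL}_2(\bA_\Q)$, its base change to $\mathrm{GL}_2(\bA_K)$ is the isobaric sum $\varphi\boxplus\varphi^c$, where $\varphi^c(\fa):=\varphi(\overline{\fa})$ has conductor $\overline{\gm}$ and infinity type $(0,-k-1)$. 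The hypothesis $(\gm,\overline{\gm})=1$ then guarantees that the associated non-cuspidal representation has level exactly $\gm\overline{\gm}$, and its spherical Hecke eigenvalues at primes $\fq\nmid\gm\overline{\gm}$ are $\varphi(\fq)+\varphi^c(\fq)$. Comparing with the defining formula of $\gm_{\phi,S}$ one reads off that $\mathcal{F}$ is the Eisenstein eigenform attached to a pair $(\phi_1,\phi_2)$ with conductors $\gm$ and $\overline{\gm}$ and pair-infinity-type $[(k+1,0),(-1,0)]$, i.e.\ the first case of Theorem \ref{t: dim 1 intro} with $\ell=0$.

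Next, I would identify the ordinary $p$-stabilization of $\mathcal{F}$ with $\tilde{\phi}=(\phi,\alpha_\fp,\beta_{\overline{\fp}})$ as chosen in the setup. Since $\iota_p$ corresponds to $\fp$ and $\varphi$ has infinity type $(-k-1,0)$, the Hecke polynomial of $\theta_\varphi$ at the split prime $p$ has unit root $\varphi(\fp)$ and non-unit root $\varphi(\overline{\fp})$ (of $p$-adic valuation $k+1$), so $\theta_\varphi$ is ordinary at $p$ and its ordinary stabilization uses the root $\varphi(\fp)$ at $\fp$ and the root $\varphi(\fp)=\varphi^c(\overline{\fp})$ at $\overline{\fp}$. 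A direct unwinding of the dictionary $\alpha_{\fq}=\phi_1(\fq)^{-1}$, $\beta_{\fq}=N(\fq)\phi_2(\fq)^{-1}$ shows that, for the pair $(\phi_1,\phi_2)$ above, these two unit roots correspond respectively to $\alpha_{\fp}$ and $\beta_{\overline{\fp}}$; the asymmetry between $\alpha$ at $\fp$ and $\beta$ at $\overline{\fp}$ reflects the fact that the nontrivial part of the infinity type sits in $\phi_1$ at the first archimedean place and in $\phi_2$ (through the norm twist) at the second. Thus the ordinary $p$-stabilization of $\mathcal{F}$ coincides with the distinguished $\tilde{\phi}$ of the setup.

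Once this identification is in place, parts (1) and (2) of the preceding theorem apply directly to produce points $x\in\cE$ and $x_c\in\cE_c$ at which the weight maps $\cE\to\cW$ and $\cE_c\to\cW$ are etale, which is exactly the assertion of the corollary. In my view, the main obstacle is entirely the careful bookkeeping in the middle step: matching the normalizations implicit in the definition of $(\phi_1,\phi_2)$ (in particular the twist by the norm character hidden in $\beta_\fq$), verifying the conductor computation at primes dividing $\gm\overline{\gm}$ via the explicit description of the local components of $\varphi\boxplus\varphi^c$, and correctly pinning down which Satake parameter at each of $\fp,\overline{\fp}$ is the unit. Once those conventions are locked in, the corollary reduces to a direct invocation of the preceding eigenvariety theorem with $\ell=0$.
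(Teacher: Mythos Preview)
Your overall strategy is exactly the paper's: set $\phi_1=\varphi^{-1}$ and $\phi_2=(\varphi^c)^{-1}|\cdot|_{\A_K}^{-1}$ (this is Remark~\ref{r: recoverBC p-estab}), check that $(\phi_1,\phi_2)$ lands in the first case of the setup, identify the ordinary $p$-stabilization with the distinguished $\tilde\phi$, and then invoke Theorem~\ref{t: main general theorem}. So the approach is correct.

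However, the bookkeeping you flag as the main obstacle has in fact slipped in two places. First, $\mathcal{F}$ has \emph{parallel} weight $(k,k)$, so the pair $(\phi_1,\phi_2)$ has infinity type $[(k+1,0),(-1,k)]$, i.e.\ $\ell=k$, not $\ell=0$: the contribution $(0,k+1)$ from $(\varphi^c)^{-1}$ combined with the $(-1,-1)$ from $|\cdot|_{\A_K}^{-1}$ gives $\phi_2$ infinity type $(-1,k)$. With $\ell=0$ you would be describing a weight-$(k,0)$ eigensystem, which is not that of $\mathcal F$. Second, the paper's convention is $\alpha_\gq=N(\gq)\phi_2(\gq)^{-1}$ and $\beta_\gq=\phi_1(\gq)^{-1}$, the reverse of what you wrote, and the unit root is $\varphi(\overline{\fp})$, not $\varphi(\fp)$: since $\iota_p$ corresponds to $\fp$ and $\varphi$ has infinity type $(-k-1,0)$, one finds $v_p(\varphi(\fp))=k+1$ and $v_p(\varphi(\overline{\fp}))=0$ (compare Remark~\ref{r: valuaciones general}), so the ordinary stabilization is $\mathcal F^{\alpha\alpha}$ with $\alpha=\varphi(\overline{\fp})$. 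Once these two corrections are made your argument goes through and coincides with the paper's.
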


It is interesting to mention that the other $3$ $p$-stabilizations of $\cF$ are of critical slope, then we cannot apply the results of this work because control theorems on the cohomology are not available in these cases. This makes their study more subtle than that of the ordinary $p$-stabilization (see for example \cite{bellaiche2012critical}, \cite{bergdall2024} and \cite{BW21}). We make some remarks and questions at the end of Section \ref{ss: results eigenvariety} relating them to the Coleman-Mazur eigencurve. We hope to study this in the near future.

Returning to the general case, when $\tilde{\phi}= (\phi, \alpha_\gp, \alpha_{\overline{\gp}})$ and $\phi$ has infinity type $[(k+1,\ell+1), (-1,-1)]$ with $(k,\ell)\neq (0, 0)$, in Section \ref{sss: $p$-adic $L$-functions} we attach $p$-adic $L$-functions and families of them around the point attached to $\tilde\phi$ essentially using the formalism developed in  \cite{chris2017} (see also \cite[\S2.4]{BW21}). It would be interesting to connect these $p$-adic $L$-functions to special values of Hecke $L$-functions. When $p$ is inert in $K$, $\phi=(\phi_1,\phi_2)$ has infinity type $[(k+1,\ell+1), (-1,-1)]$ and the conductors of $\phi_1,\phi_2$ satisfy $p||\gn_1$ and $p\nmid\gn_2$, We emphasize that we obtain an ordinary Bianchi Eisenstein system and all the results of this work can be applied. In particular, after establishing a connection to special values of Hecke $L$-functions of the $p$-adic $L$-function, it would be interesting to check the presence of trivial zeros and to develop the Greenberg-Stevens formalism as in the Hilbert situation.

\subsection*{Acknowledgements} We would like to thank Adel Betina, 
Chris Williams, Mladen Dimitrov, Denis Benois, Andrei Jorza, Héctor del Castillo and Tobias Berger for discussions and comments about this work.

DBS was supported by ECOS230025 and Anid Fondecyt grants 11201025 and 1241702. LP was supported by Proyecto Postdoc Dicyt USA2155\_DICYT, ANR-18-CE40-0029 and Anid Fondecyt Postdoctorado 3240129.

\section{Cohomology of the Bianchi threefold}{\label{s:cohomology of threefolds}}

\subsection{Notations}
Let $p$ be a rational prime and fix throughout the paper embeddings $\iota_\infty:\overline{\Q}\hookrightarrow\C$ and $\iota_p:\overline{\Q}\hookrightarrow\overline{\Q}_p$, note that the latter fixes a $p$-adic valuation $v_p$ on $\overline{\Q}_p$.

Let $K$ be an imaginary quadratic field with discriminant $-D$, ring of integers $\mathcal{O}_K$ and different ideal $\mathcal{D}$ generated by $\delta=\sqrt{-D}$. Denote by $K_{\gq}$ the completion of $K$ with respect to the prime $\gq$ of $K$, $\mathcal{O}_{\gq}$ the ring of integers of $K_{\gq}$ and fix a uniformizer $\varpi_\gq$ at $\gq$.

Throughout the paper, we suppose $p$ splits in $K$ as $\gp\overline{\gp}$ with $\gp$ being the prime corresponding to the embedding $\iota_p$. In particular, if we denote $\cO_{K, p}:= \cO_{K}\otimes_{\Z}\Z_p$ we have $\cO_{K, p}\cong \cO_{\fp}\times \cO_{\overline{\fp}}\cong \Z_p\times \Z_p$.

We denote by $\mathbb{A}_K$ the ring of adeles of $K$, $\mathbb{A}_K^f$ the finite adeles and $\widehat{\cO}_K=\prod_{\gq}\cO_\gq$. If $\chi: K\backslash\A_K^\times\rightarrow \C$ is a Hecke character of conductor $\gf$, we denote by $\chi_{\infty}$, $\chi_f$ and $\chi_{\gq}$ the restriction of $\chi$ to $\C$, $\mathbb{A}_K^f$ and $K_{\gq}^{\times}$ respectively. For an ideal $\gm\subset\cO_K$ write $\chi_\gm:=\prod_{\gq|\gm}\chi_\gq$.

A Hecke character $\chi$ as above can be identified with a function on ideals of $K$ that has support on those that are coprime to $\gf$, concretely, for each prime ideal $\gq\nmid\gf$, we define $\chi(\gq)=\chi_\gq(\varpi_\gq)$ (which is independent of the choice of the uniformizer) and $\chi(\gq)=0$ otherwise. We define $\chi^c:=\chi\circ c$ where $c$ denotes complex conjugation on the ideals
of $K$. Denote by $\mathrm{Cl}_K(\gf):=K^\times\backslash\A_K^\times/I(\gf)\C^\times$ the ray class group of $K$ modulo $\gf$, where $I(\ff):=\{x\in\widehat{\cO}_K^\times:x\equiv1\text{ (mod $\ff$)}\}$.

Let $B$ be the Borel subgroup of upper triangular matrices of $GL_{2/K}$, with unipotent radical $U$ and maximal torus $T$. For an ideal $\gn\subset\cO_K$, denote by $\K_1(\gn)$ (resp. $\K_0(\fn)$) the subgroup of $\mathrm{GL}_2(\widehat{\cO}_K)$ of matrices congruent to $\smallmatrixx{*}{*}{0}{1}$ (resp. $\smallmatrixx{*}{*}{0}{*}$) modulo $\gn$. Let $\gr$ be an ideal coprime to $\gn$, we will also consider the group $\K_1(\gn,\gr):=\K_1(\gn)\cap \K_0(\gr)$. Moreover, for every prime ideal $\fq$ we write $\K_1(\gn)_\gq\subset \mathrm{GL}_2(\cO_\gq)$ (resp. $\K_0(\fn)_\gq\subset \mathrm{GL}_2(\cO_\gq)$) for the subgroup of matrices congruent to $\smallmatrixx{*}{*}{0}{1}$ (resp. $\smallmatrixx{*}{*}{0}{*}$) modulo $\fq^{s_\gq}$ where $\gq^{s_\gq}||\gn$. Note that for a prime $\gr$, the group $\K_0(\gr)_\gr=I_\gr$ is just the Iwahori subgroup at $\gr$.

\begin{definition} \label{d: algebras de hecke}
For $\gn\subset\cO_K$ an ideal (resp. coprime to $p$) consider the abstract Hecke algebra: 
$$\cH_\gn= \Z[\{T_\gq,T_{\gq,\gq} : (\gq,\gn)=1\}],$$
$$ (\text{resp. }\cH_{\gn,p}= \Z[\{T_\gq,T_{\gq,\gq} : (\gq,p\gn)=1\}\cup \{U_\gq :\gq|p\} ]).$$
\end{definition}

Let $N$ be a module over a ring $R$ and endowed with a linear action by $\cH_\gn$. By a \textit{system of Hecke eigenvalues} appearing in $N$ we mean a ring homomorphism $\psi:\cH_\gn\rightarrow R$ such that there exists a non-zero vector $n\in N$ satisfying $t\cdot h=\psi(t)\cdot h$ for all $t\in\cH_\gn$.

\subsection{Induced representations}{\label{ss: induced representations}}

\subsubsection{Definitions}
Let $\chi_1, \chi_2: K^\times\backslash\A_K^\times\rightarrow\C^\times$ be two Hecke characters of conductor $\gm_1$, $\gm_2$ and infinity type $(k_1,\ell_1)$, $(k_2,\ell_2)$ respectively. They determine a character $\chi=(\chi_1,\chi_2)$ in $T(K)\backslash T(\A_K)$ of infinity type $[(k_1,\ell_1),(k_2,\ell_2)]$ and finite part $\chi_f=(\chi_{1,f},\chi_{2,f})$.

For each prime $\gq$, $(\chi_1,\chi_2)_\gq=(\chi_{1,\gq},\chi_{2,\gq})$ defines a character
\begin{align}\label{rep borel}
\chi_\gq:B(K_\gq)&\rightarrow\C^\times \\
\nonumber \smallmatrixx{a}{b}{0}{d}&\mapsto \chi_{1,\gq}(a)\chi_{2,\gq}(d),
\end{align}
and we denote by 
\begin{equation*}
V_{\chi_\gq}=\{h:\mathrm{GL_2}(K_\gq)\rightarrow\C: h(b g)=\chi_\gq(b)h(g), \forall b\in B(K_\gq), g\in \mathrm{GL_2}(K_\gq)\},   
\end{equation*}
the corresponding local induced representation.

We consider the right action of $\mathrm{GL_2}(K_\gq)$ on $V_{\chi_\gq}$ given by $(h\cdot g)(x)= h(x\cdot g^{-1})$ for $g, x\in \mathrm{GL_2}(K_\gq)$. For example, if $\K_\gq$ is a compact subgroup of $\mathrm{GL}_2(K_\gq)$ then we have $V_{\chi_\gq}^{\K_\gq}=\{h\in V_{\chi_\gq}: h(gk)=h(g), \forall k\in \K_\gq, g \in \mathrm{GL_2}(K_\gq) \}$. 

\begin{remark} \label{r: no cero entonces conductor 1} Note that if $V_{\chi_\gq}^{\mathrm{GL}_2(\cO_{\fq})}\neq \{0\}$ then $\chi_{1, \fq}$ and $\chi_{2, \fq}$ are necessarily unramified characters.
\end{remark}

Similarly, we define the space $V_{\chi_f}$ which is endowed with a right action of $\mathrm{GL}_2(\A_K^f)$ in the same way as before. Note that if $\K_f=\prod_{\gq\nmid \infty}\K_\gq\subset \mathrm{GL}_2(\A_K^f)$ is an open compact subgroup, then we have
$V_{\chi_f}^{\K_f}=\bigotimes_{\gq\nmid \infty} V_{\chi_\gq}^{\K_\gq}$.

For each prime ideal $\gq\subset \cO_K$ we define the Hecke operators $T_{\fq}$, $T_{\fq,\fq}$ as follows. Let $x=\smallmatrixx{1}{0}{0}{\varpi_\gq}\in \mathrm{GL}_2(\A_K^{f})$, where we consider $\varpi_\gq\in\A_K^{f,\times}$ as being $\varpi_\gq$ at $\fq$ and trivial outside $\fq$, then for $h \in V_{\chi_f}^{\K_f}$ and each $g\in \mathrm{GL}_2(\A_K^{f})$ we put: 
\begin{equation*}
(T_\gq\cdot h)(g)= \sum_{\gamma\in\K_fx\K_f/\K_f}h(g\gamma^{-1}),
\end{equation*}
\begin{equation*}
(T_{\gq,\gq}\cdot h)(g)= h(g \smallmatrixx{\varpi_\gq}{0}{0}{\varpi_\gq}^{-1}).
\end{equation*}
Observe that these operators are independent of the choice of uniformizer. When $\K_f =\K_1(\gn)$ we write $U_\gq$ instead $T_\gq$ for primes $\gq|\gn$. In particular, we obtain an action of the Hecke algebra $\cH_{\fn}\otimes_{\Z}\C$. The same observations follow when $\K_f =\K_1(\gn,\gr)$ for $\fr$ some ideal and $\fq\nmid \fn\gr$. In this case, we obtain an action of the Hecke algebra $\cH_{\fn, p}\otimes_{\Z}\C$.

\begin{remark}
Note that the action of the Hecke operators $T_\gq$ above is consistent with \cite[\S 2.9.3]{berger}. Moreover, it differs from the action in \cite[\S 3.3]{berger2008denominators}, which is motivated by our interest in the right-$\mathrm{GL}_2(K)$-module $V_{k,\ell}^*(\C)$ instead of the left-$\mathrm{GL}_2(K)$-module $V_{k,\ell}(\C)$ (see Section \ref{ss: classical cohomology} and Remark \ref{moduloberger}). 
\end{remark}

\subsubsection{Multiplicity-one}\label{sss:  mult one}

\begin{lemma}\label{l: eigen} Let $\chi_1,\chi_2$ be Hecke characters of $K$ of conductors $\gm_1$, $\gm_2$ respectively such that $(\gm_1,\gm_2)=1$ and set $\gm=\gm_1\gm_2$. Then $V_{\chi_f}^{\K_1(\gm)}$ is a 1-dimensional $\C$-vector space and each $h\in V_{\chi_f}^{\K_1(\gm)}$ is an eigenvector of:
\begin{itemize}
\item $T_\gq$ for $\gq\nmid\gm$ (resp. $U_\gq$ for $\gq|\gm$), with eigenvalues $N(\fq) \chi_2(\gq)^{-1}+  \chi_1(\gq)^{-1};$ 
\item $T_{\gq,\gq}$ for $\gq\nmid\gm$, with eigenvalues $\chi_1(\gq)^{-1}\chi_2(\gq)^{-1};$
\end{itemize}
\end{lemma}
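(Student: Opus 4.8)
The plan is to reduce everything to a purely local computation prime-by-prime, using the tensor product decomposition $V_{\chi_f}^{\K_1(\gm)}=\bigotimes_{\gq\nmid\infty}V_{\chi_\gq}^{\K_1(\gm)_\gq}$ already recorded in the excerpt. For a prime $\gq\nmid\gm$ the local component of $\K_1(\gm)$ is $\mathrm{GL}_2(\cO_\gq)$, and since $(\gm_1,\gm_2)=1$ at most one of $\chi_{1,\gq},\chi_{2,\gq}$ is ramified at a given $\gq\mid\gm$; in fact at a prime $\gq\mid\gm_1$ we have $\chi_{2,\gq}$ unramified and vice versa. So the first step is: (i) at $\gq\nmid\gm$, show $V_{\chi_\gq}^{\mathrm{GL}_2(\cO_\gq)}$ is one-dimensional. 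This is the standard fact that an unramified principal series has a unique (up to scalar) spherical vector — one builds it via the Iwasawa decomposition $\mathrm{GL}_2(K_\gq)=B(K_\gq)\mathrm{GL}_2(\cO_\gq)$, which forces $h$ to be determined by its value at the identity, and conversely the formula $h(bk)=\chi_\gq(b)$ is well-defined because $B(K_\gq)\cap\mathrm{GL}_2(\cO_\gq)=B(\cO_\gq)$ on which $\chi_\gq$ is trivial (unramified).

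The second step is the analogous statement at the ramified primes $\gq\mid\gm$. Here $\K_1(\gm)_\gq$ consists of matrices $\equiv\smallmatrixx{*}{*}{0}{1}\pmod{\gq^{s_\gq}}$. One again uses Iwasawa $\mathrm{GL}_2(K_\gq)=B(K_\gq)\mathrm{GL}_2(\cO_\gq)$ together with the Bruhat-type decomposition of $\mathrm{GL}_2(\cO_\gq)$ relative to $\K_1(\gm)_\gq$: the double cosets $B(\cO_\gq)\backslash\mathrm{GL}_2(\cO_\gq)/\K_1(\gm)_\gq$ are indexed by $\mathbb{P}^1(\cO_\gq/\gq^{s_\gq})$, and an invariant functional $h$ must vanish on every coset whose representative $\smallmatrixx{a}{b}{c}{d}$ has $c\not\equiv0$, because on such a coset the constraint from $B(\cO_\gq)$-equivariance on the left (through $\chi_{1,\gq}$, which is ramified if $\gq\mid\gm_1$) and $\K_1(\gm)_\gq$-invariance on the right becomes incompatible unless the value is $0$; the surviving coset is the "big cell" $B(\cO_\gq)\K_1(\gm)_\gq$, on which $h$ is forced to be $\chi_\gq$ restricted there, and one checks that the ramified character does kill the relevant intersection $B(\cO_\gq)\cap(\text{that double coset's stabilizer})$ precisely because of the shape of $\K_1(\gm)_\gq$ (the lower-left entry is $\equiv 0$ and the lower-right is $\equiv 1$). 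This gives a one-dimensional space at each $\gq\mid\gm$, hence $\dim_\C V_{\chi_f}^{\K_1(\gm)}=1$ overall.

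The third step is to compute the Hecke eigenvalues on the (unique up to scalar) nonzero $h$. For $T_{\gq,\gq}$ with $\gq\nmid\gm$ this is immediate from the definition: $(T_{\gq,\gq}h)(g)=h(g\,\mathrm{diag}(\varpi_\gq,\varpi_\gq)^{-1})=\chi_\gq(\mathrm{diag}(\varpi_\gq,\varpi_\gq)^{-1})h(g)=\chi_{1,\gq}(\varpi_\gq)^{-1}\chi_{2,\gq}(\varpi_\gq)^{-1}h(g)$, i.e.\ eigenvalue $\chi_1(\gq)^{-1}\chi_2(\gq)^{-1}$, using that the central matrix is in $B(K_\gq)$. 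For $T_\gq$ (and $U_\gq$ for $\gq\mid\gm$) one must explicitly enumerate the coset representatives of $\K_f x\K_f/\K_f$ with $x=\smallmatrixx{1}{0}{0}{\varpi_\gq}$; the standard choice is $\smallmatrixx{1}{0}{0}{\varpi_\gq}$ together with $\smallmatrixx{\varpi_\gq}{j}{0}{1}$ for $j$ running over $\cO_\gq/\varpi_\gq$ (so $N(\fq)$ of the latter in the unramified case, while for $\gq\mid\gm$ only the single coset $\smallmatrixx{\varpi_\gq}{0}{0}{1}$ survives the $\K_1$-condition, giving the "$U_\gq$" normalization). Summing $h(g\gamma^{-1})$ over these and using left $B$-equivariance gives $\chi_{1,\gq}(\varpi_\gq)^{-1}+N(\fq)\chi_{2,\gq}(\varpi_\gq)^{-1}$ in the unramified case and $\chi_1(\gq)^{-1}$ alone... one must be careful with the normalization of the induction (whether it is normalized or not, which shifts by $|\det|^{1/2}$ and explains the factor $N(\fq)$); matching the stated eigenvalue $N(\fq)\chi_2(\gq)^{-1}+\chi_1(\gq)^{-1}$ pins down that the induction here is the \emph{un}normalized one, consistent with the space $V_{\chi_\gq}$ as defined.

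The main obstacle I expect is the bookkeeping at the ramified primes: carefully identifying the double coset representatives $B(\cO_\gq)\backslash\mathrm{GL}_2(\cO_\gq)/\K_1(\gm)_\gq$, verifying that a $\chi_\gq$-equivariant functional is forced to vanish off the big cell (this is where the hypothesis $(\gm_1,\gm_2)=1$ is essential — it guarantees that on the distinguished surviving coset exactly the right one of $\chi_{1,\gq},\chi_{2,\gq}$ is unramified so that the defining relation is consistent), and then re-doing the $U_\gq$ coset computation in this ramified setting so that only one term survives. The unramified steps and the $T_{\gq,\gq}$ computation are routine; the normalization-of-induction check (accounting for the $N(\fq)$) is a small but genuine point to get right.
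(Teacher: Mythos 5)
Your overall architecture --- local tensor decomposition, Iwasawa decomposition, double-coset analysis at the ramified primes, explicit coset computation of the eigenvalues --- is the same as the paper's, and your unramified one-dimensionality argument and the $T_{\gq,\gq}$ computation are fine. The gaps are in the ramified-prime analysis, which you correctly identify as the crux. You assert that at \emph{every} $\gq\mid\gm$ the new vector is supported on the double coset $B(K_\gq)\K_1(\gm)_\gq$ containing the identity. That cannot hold for both types of ramified primes: if $h$ is supported there with $h(1)\neq 0$, then for any unit $u$ the matrix $\smallmatrixx{u}{0}{0}{1}$ lies both in $B(K_\gq)$ and in $\K_1(\gm)_\gq$, so right-invariance gives $h\smallmatrixx{u}{0}{0}{1}=h(1)$ while left equivariance gives $h\smallmatrixx{u}{0}{0}{1}=\chi_{1,\gq}(u)h(1)$; hence the identity cell can carry a nonzero vector only when $\chi_{1,\gq}$ is unramified. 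Since $(\gm_1,\gm_2)=1$, exactly one of $\chi_{1,\gq},\chi_{2,\gq}$ is ramified at each $\gq\mid\gm$, and in the case where it is $\chi_{1,\gq}$ the vector must live on a different cell of the decomposition $\mathrm{GL}_2(K_\gq)=\bigsqcup_{i=0}^{s_\gq}B(K_\gq)\smallmatrixx{1}{0}{\varpi_\gq^{i}}{1}\K_1(\gm)_\gq$ used in the paper. This case distinction is not cosmetic: it is precisely why the $U_\gq$-eigenvalue is $N(\fq)\chi_2(\gq)^{-1}$ at primes dividing one of the conductors and $\chi_1(\gq)^{-1}$ at primes dividing the other (in the statement the term involving the ramified character is read as $0$). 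Your uniform answer ``$\chi_1(\gq)^{-1}$ alone'' is therefore wrong for half of the ramified primes.

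The second gap is the coset bookkeeping for $T_\gq$ and $U_\gq$. With the convention $(T_\gq h)(g)=\sum_\gamma h(g\gamma^{-1})$ one must sum over representatives of $\K_f\backslash\K_f x\K_f$; in the unramified case these are $\smallmatrixx{1}{a}{0}{\varpi_\gq}$, $a\in\cO_\gq/\varpi_\gq$, together with $\smallmatrixx{\varpi_\gq}{0}{0}{1}$. Your representatives $\smallmatrixx{\varpi_\gq}{j}{0}{1}$ all lie in a single such coset, and feeding them into $\sum h(g\gamma^{-1})$ returns $N(\fq)\chi_1(\gq)^{-1}+\chi_2(\gq)^{-1}$, i.e.\ the roles of $\chi_1$ and $\chi_2$ swapped relative to the statement. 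At ramified level it is the coset of $\smallmatrixx{\varpi_\gq}{0}{0}{1}$ that drops out and the $N(\fq)$ cosets $\smallmatrixx{1}{a}{0}{\varpi_\gq}$ that remain --- the opposite of what you claim --- so obtaining the eigenvalue $\chi_1(\gq)^{-1}$ when $\chi_{2,\gq}$ is ramified requires evaluating the sum at a point of the supporting cell (the paper uses $\smallmatrixx{0}{1}{1}{0}$) and checking that all but one of the $N(\fq)$ terms cancel because a ramified character sums to zero over the units; none of this appears in your sketch. Both gaps would surface immediately upon writing out the computation, but as it stands the plan does not produce the stated eigenvalues.
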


\begin{proof} The proof of this lemma follows exactly the arguments of \cite[\S 3.2]{berger2008denominators} and the calculations given in \cite[Lemma 3.11]{berger}. Since the level $\K^1(\gn)$ used in Berger's works differs from the level $\K_1(\gn)$ we use, then we prefer to describe his work in our setting.  

First note that $V_{\chi_f}^{\K_1(\gm)}=\bigotimes_{\gq\nmid \infty} V_{\chi_\gq}^{\K_1(\gm)_\gq}$, and by \cite[Thm 1]{casselman1973some} we know that $V_{\chi_\gq}^{\K_1(\fq^{s_{\fq}})}$ is a one dimensional $\C$-vector space. 

For the study of the Hecke eigenvalues we produce an explicit non-zero vector in $V_{\chi_f}^{\K_1(\gm)}$, by considering the different cases of $\fq$.

If $\gq\nmid \gm$ then we define $h_{\fq}\in V_{\chi_\gq}$ given by the formula: 
$$h_\fq(g)= \chi_{1, \fq}(a)\chi_{2, \fq}(d)$$
where $g= \smallmatrixx{a}{b}{0}{d}\cdot k$ with $k\in \K_1(\gm)_\gq=\mathrm{GL}_2(\cO_{\fq})$. Note that $h_{\fq}\in V_{\chi_\gq}^{\mathrm{GL}_2(\cO_{\fq})}$.

Now we consider the case $\gq\mid \gm$, for which we use the following decomposition: 
$$\mathrm{GL}_2(K_\fq)=  \bigsqcup_{i= 0}^{s_\gq} B(K_{\fq})\smallmatrixx{1}{0}{\varpi_{\fq}^{i}}{1} \K_1(\gm)_\gq,$$
where $\gq^{s_\gq}||\gm$.

If $\gq\mid \gm_1$ then we define $h_{\fq}\in V_{\chi_\gq}$ given by the formula:
$$
h_\fq(g)= \begin{cases} 
			\chi_{1, \fq}(a) \chi_{2, \fq}(d) & \text{if $g= \smallmatrixx{a}{b}{0}{d}\cdot k\in B(K_{\fq}) \K_1(\gm)_\gq$}\\
           0  & \text{if not,}
\end{cases}
$$
and note that $h_{\fq}\in V_{\chi_\gq}^{\K_1(\gm)_\gq}$.

If $\gq\mid \gm_2$ then we define $h_{\fq}\in V_{\chi_\gq}$ given by the formula:
$$
h_\fq(g)= \begin{cases}
			\chi_{1, \fq}(a) \chi_{2, \fq}(d) & \text{if $g= \smallmatrixx{a}{b}{0}{d}\cdot \smallmatrixx{1}{0}{1}{1}\cdot k\in B(K_{\fq})\smallmatrixx{1}{0}{1}{1}  \K_1(\gm)_\gq$}\\
           0  & \text{if not,}
\end{cases}
$$
and note that $h_{\fq}\in V_{\chi_\gq}^{\K_1(\gm)_\gq}$.

By construction, we have $h:= \prod_{\fq\nmid \infty} h_{\fq}\in V_{\chi_f}^{\K_1(\gm)}$. To obtain the Hecke-eigenvalues we calculate the action of the Hecke operators on $h$. 

If $\gq\nmid \gm$ we have that $(T_\fq\cdot h)(1)=$
\begin{equation}\label{e: operador hecke dimension 1}
=\sum_{a\in \cO_\fq/ \varpi_\fq} h\smallmatrixx{1}{-a\varpi_{\fq}^{-1}}{0}{\varpi_{\fq}^{-1}}+ h\smallmatrixx{\varpi_{\fq}^{-1}}{0}{0}{1}= \left(\sum_{a\in \cO_\fq/ \varpi_\fq} h_{\fq}\smallmatrixx{1}{-a\varpi_{\fq}^{-1}}{0}{\varpi_{\fq}^{-1}}+ h_{\fq}\smallmatrixx{\varpi_{\fq}^{-1}}{0}{0}{1}\right)\cdot \prod_{\fl\neq \fq}h_{\fl}(1) 
\end{equation}
$$= \left(N(\fq)\chi_{2, \fq}(\varpi_\fq)^{-1}+ \chi_{1, \fq}(\varpi_\fq)^{-1} \right) \cdot \prod_{\fl\neq \fq}h_{\fl}(1)= \left(N(\fq)\chi_{2, \fq}(\varpi_\fq)^{-1}+ \chi_{1, \fq}(\varpi_\fq)^{-1} \right)\cdot h(1),$$
then, the $T_\gq$-eigenvalue is $N(\fq)\chi_{2, \fq}(\varpi_\fq)^{-1}+ \chi_{1, \fq}(\varpi_\fq)^{-1}$.

If $\gq\mid \gm_1$ we have: 
$$(U_\fq\cdot h)(1)= \sum_{a\in \cO_\fq/ \varpi_\fq} h\smallmatrixx{1}{-a\varpi_{\fq}^{-1}}{0}{\varpi_{\fq}^{-1}}= \left(\sum_{a\in \cO_\fq/ \varpi_\fq} h_{\fq}\smallmatrixx{1}{-a\varpi_{\fq}^{-1}}{0}{\varpi_{\fq}^{-1}}\right)\cdot \prod_{\fl\neq \fq}h_{\fl}(1)$$
$$= \left(N(\fq)\chi_{2, \fq}(\varpi_\fq)^{-1} \right) \cdot \prod_{\fl\neq \fq}h_{\fl}(1)= N(\fq)\chi_{2, \fq}(\varpi_\fq)^{-1}\cdot h(1)$$
then, the $U_\gq$-eigenvalue is $N(\fq)\chi_{2, \fq}(\varpi_\fq)^{-1}$.

If $\gq\mid \gm_2$, we have:
$$(U_\gq h)\smallmatrixx{0}{1}{1}{0}=\left(\sum_{a\in \cO_\fq/ \varpi_\fq} h_{\fq}\smallmatrixx{0}{1}{1}{0}\smallmatrixx{1}{-a\varpi_{\fq}^{-1}}{0}{\varpi_{\fq}^{-1}}\right)\cdot \prod_{\fl\neq \fq}h_{\fl}\smallmatrixx{0}{1}{1}{0}$$
$$=\left(h_\gq\smallmatrixx{\varpi_\gq^{-1}}{0}{0}{1}\smallmatrixx{0}{1}{1}{0}+ \sum_{a\in \cO_\fq/ \varpi_\fq, a\neq 0} h_{\fq}\smallmatrixx{a^{-1}}{-\varpi_{\fq}^{-1}}{0}{a\varpi_{\fq}^{-1}}\smallmatrixx{1}{0}{\varpi_\gq a^{-1}}{-1}\right)\cdot\prod_{\fl\neq \fq}h_{\fl}\smallmatrixx{0}{1}{1}{0},$$
and since $h_\gq\smallmatrixx{\varpi_\gq^{-1}}{0}{0}{1}\smallmatrixx{0}{1}{1}{0}=h_\gq\smallmatrixx{-\varpi_\gq^{-1}}{(1+\varpi_\gq)\varpi_\gq^{-1}}{0}{1}\smallmatrixx{1}{0}{1}{1}\smallmatrixx{1+\varpi_\gq}{-1}{\varpi_\gq}{1}=\chi_{1, \fq}(-\varpi_\gq^{-1})$ and the sum over $a\in \cO_\fq/ \varpi_\fq$ non-trivial  is zero, then 
$$(U_\gq h)\smallmatrixx{0}{1}{1}{0}=\chi_{1, \fq}(-\varpi_\gq^{-1}) \cdot\prod_{\fl\neq \fq}h_{\fl}\smallmatrixx{0}{1}{1}{0}.$$
Now, since
$$h_\gq\smallmatrixx{0}{1}{1}{0}=h_\gq \smallmatrixx{-1}{1+\varpi_\gq}{0}{1}\smallmatrixx{1}{0}{1}{1}\smallmatrixx{1+\varpi_\gq}{-1}{-\varpi_\gq}{1}=\chi_{1, \fq}(-1)$$
we have 
$$(U_\gq h)\smallmatrixx{0}{1}{1}{0}=\chi_{1, \fq}(\varpi_\gq)^{-1} h\smallmatrixx{0}{1}{1}{0},$$
and the $U_\gq$-eigenvalue is $\chi_{1, \fq}(\varpi_\fq)^{-1}$.

Finally, for $\gq\nmid\gm$ we have 
$$(T_{\gq,\gq}\cdot h)(1)= 
h\smallmatrixx{{\varpi_\gq}^{-1}}{0}{0}{{\varpi_\gq}^{-1}}=\chi_1(\varpi_\gq)^{-1}\chi_2(\varpi_\gq)^{-1}\cdot h(1),$$
then, the $T_{\gq,\gq}$-eigenvalue is $\chi_1(\gq)^{-1}\chi_2(\gq)^{-1}$.
\end{proof}

Let $\phi=(\phi_1,\phi_2)$ be a couple of Hecke characters of $K$ of conductors $\fn_1$, $\fn_2$ respectively, such that $(\gn_1,\gn_2)=1$. Let $S$ be a set of primes of $K$ not dividing $\gn:=\fn_1 \fn_2$. Motivated by the previous lemma we consider the following ideal of $\cH_{\fn}\otimes_{\Z} \C$:
\begin{equation} \label{e: ideal}
 \gm_{\phi, S}= (\{T_\gq-\phi_1(\gq)^{-1}-N(\fq)\phi_2(\gq)^{-1},T_{\gq,\gq}-\phi_1(\gq)^{-1}\phi_2(\gq)^{-1} : \gq\in S\}).
\end{equation}

Moreover, we will just write $\gm_{\phi}$ when $S=\{\gq \text{ prime }: \gq\nmid\gn\}$. 

\begin{theorem}\label{t: dim1borde} Suppose that $S$ has Dirichlet density greater than $1/2$ and $\phi$ as above with infinity type $[(k_1,\ell_1),(k_2,\ell_2)]$ satisfying $(k_1,\ell_1)\neq (k_2+1,\ell_2+1)$. Let $\chi_1,\chi_2$ be a pair of Hecke characters such that $V_{(\chi_1,\chi_2)_f}^{\K_1(\fn)}[[\gm_{\phi, S}]]\neq \{0\}$, then either $(\chi_1,\chi_2)=(\phi_1,\phi_2)$ or $(\chi_1,\chi_2)= (\phi_2|\cdot|_{\A_K},\phi_1|\cdot|_{\A_K}^{-1})$. Moreover, we have that $V_{(\chi_1,\chi_2)_f}^{\K_1(\fn)}[[\gm_{\phi, S}]]$ is a 1-dimensional $\C$-vector space. 
\end{theorem}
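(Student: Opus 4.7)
The plan is to turn the matching of Hecke eigenvalues into an identity of Hecke characters, and then invoke the classical fact that a nontrivial Hecke character of $K$ takes the value $1$ on a set of prime ideals of Dirichlet density at most $1/2$ (with equality only for quadratic characters), while an infinite order Hecke character vanishes on a density zero set of primes.

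First I reduce to a local computation. Non-vanishing of $V_{(\chi_1,\chi_2)_f}^{\K_1(\gn)}$ and Remark \ref{r: no cero entonces conductor 1} force $\chi_{1,\gq}$ and $\chi_{2,\gq}$ to be unramified at every $\gq\nmid\gn$, so the computation inside the proof of Lemma \ref{l: eigen} shows that $T_\gq$ and $T_{\gq,\gq}$ act on $V_{(\chi_1,\chi_2)_f}^{\K_1(\gn)}$ as the scalars $\chi_1(\gq)^{-1}+N(\gq)\chi_2(\gq)^{-1}$ and $\chi_1(\gq)^{-1}\chi_2(\gq)^{-1}$ respectively. The existence of a vector in $V_{(\chi_1,\chi_2)_f}^{\K_1(\gn)}[[\gm_{\phi,S}]]$ therefore yields, at every $\gq\in S$, the multiset identity
$$\{\chi_1(\gq)^{-1},\,N(\gq)\chi_2(\gq)^{-1}\}=\{\phi_1(\gq)^{-1},\,N(\gq)\phi_2(\gq)^{-1}\}.$$
Taking products gives $\chi_1(\gq)\chi_2(\gq)=\phi_1(\gq)\phi_2(\gq)$ for all $\gq\in S$; since $S$ has Dirichlet density $>1/2$, this forces the global identity $\chi_1\chi_2=\phi_1\phi_2$.

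Now I set $\eta:=\chi_1\phi_1^{-1}$ and $\nu:=\phi_2|\cdot|_{\A_K}\phi_1^{-1}$. The hypothesis $(k_1,\ell_1)\neq(k_2+1,\ell_2+1)$ is exactly the condition that $\nu$ has nontrivial infinity type, hence is of infinite order. The multiset identity rewrites as $S=A\cup B$ where
$$A=\{\gq\in S:\eta(\gq)=1\},\qquad B=\{\gq\in S:\eta(\gq)=\nu(\gq)\},$$
and $A\cap B\subseteq\{\gq:\nu(\gq)=1\}$ has density $0$. The identity $\nu=\eta\cdot(\eta\nu^{-1})^{-1}$ shows that $\eta$ and $\eta\nu^{-1}$ cannot both be of finite order. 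If $\eta$ has infinite order then $A$ has density $0$, so $B$ has density $>1/2$, and the density bound forces $\eta\nu^{-1}=1$, hence $(\chi_1,\chi_2)=(\phi_2|\cdot|_{\A_K},\phi_1|\cdot|_{\A_K}^{-1})$. Symmetrically, if $\eta\nu^{-1}$ has infinite order we obtain $\eta=1$ and $(\chi_1,\chi_2)=(\phi_1,\phi_2)$. In either case the conductors of $\chi_1,\chi_2$ satisfy the coprimality hypothesis of Lemma \ref{l: eigen}, so the ambient space $V_{(\chi_1,\chi_2)_f}^{\K_1(\gn)}$ is one-dimensional and equal to its $\gm_{\phi,S}$-localization.

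The main subtlety is the density input: a nontrivial Hecke character of $K$ is trivial on a set of primes of Dirichlet density at most $1/2$, with equality exactly for quadratic characters, and an infinite order Hecke character is trivial only on a density zero set. The infinity-type hypothesis is used precisely to rule out the degenerate configuration in which $\eta$ and $\eta\nu^{-1}$ are both quadratic characters each realizing density $1/2$; this would force $\nu^2=1$ and contradict the nonzero infinity type of $\nu$. Apart from this density input, every step is a direct unwinding of Lemma \ref{l: eigen} and the matching of Hecke eigenvalues.
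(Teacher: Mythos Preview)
Your argument is correct and follows the same blueprint as the paper's proof: extract from the generalized eigenspace condition the equalities of Hecke eigenvalues at each $\gq\in S$, then combine Rajan's finite-order criterion (an infinite-order Hecke character cannot be trivial at any degree-one unramified prime, hence its $1$-set has density zero) with the Chebotarev/orthogonality bound (a nontrivial finite-order character has $1$-set of density $\le 1/2$) to pin down $(\chi_1,\chi_2)$. The packaging differs slightly: the paper factors the relation as $(\chi_1(\gq)^{-1}-\phi_1(\gq)^{-1})(\chi_1(\gq)^{-1}-N(\gq)\phi_2(\gq)^{-1})=0$, uses Rajan to show the \emph{same} factor vanishes at every $\gq\in S$, and then applies orthogonality on a ray class group; you instead write $S=A\cup B$, observe $\nu=\phi_2|\cdot|_{\A_K}\phi_1^{-1}$ is infinite-order so $\eta$ and $\eta\nu^{-1}$ cannot both be finite-order, and let the density inequality do the rest. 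Your route is marginally cleaner in that you never need to shrink $S$ to exclude the conductors $\gf_1,\gf_2$ of $\chi_1,\chi_2$ (Remark~\ref{r: no cero entonces conductor 1} already forces them to divide $\gn$), and you never need the intermediate step that exactly one factor vanishes at each prime. The final paragraph about ``both quadratic'' is redundant---once $\nu$ is infinite-order the case is already excluded---but this does not affect correctness.
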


\begin{proof} 
Denote by $\gf_1,\gf_2$ the conductor of $\chi_1,\chi_2$ respectively. Shrinking $S$ we can suppose that every prime $\fq\in S$ has degree $1$ and does not divide $\gf_1\gf_2\cD$. Note that $S$ still has the same Dirichlet density since the set of primes of degree $1$ have Dirichlet density 1 (see for example \cite{lang1994algebraic}[Ch. VIII \S 4]).

Suppose $V_{(\chi_1,\chi_2)_f}^{\K_1(\fn)}[[\gm_{\phi, S}]]\neq \{0\}$ and let $h\in V_{(\chi_1,\chi_2)_f}^{\K_1(\fn)}[[\gm_{\phi, S}]]$ be a non-trivial vector. Then for each $\gq\in S$ there exists $c\in\Z_{>0}$, such that $(T_\gq-\phi_1(\gq)^{-1}-N(\fq)\phi_2(\gq)^{-1})^c h=0$ and $(T_{\gq,\gq}-\phi_1(\gq)^{-1}\phi_2(\gq)^{-1})^c h=0$.

By the proof of the Lemma \ref{l: eigen} we also have that $T_\gq\cdot h = (\chi_1(\gq)^{-1}+N(\gq)\chi_2(\gq)^{-1})h$ and $T_{\gq,\gq}\cdot h = \chi_1(\gq)^{-1}\chi_2(\gq)^{-1}h$ for all $\gq\nmid\gn\gf_1\gf_2$. From these equations, in particular, we have for each prime $\gq\in S$:
\begin{itemize}
\item  $(\chi_1(\gq)^{-1}+N(\gq)\chi_2(\gq)^{-1}-\phi_1(\gq)^{-1}-N(\gq)\phi_2(\gq)^{-1})^c\cdot h=0$;
\item $(\chi_1(\gq)^{-1}\chi_2(\gq)^{-1}-\phi_1(\gq)^{-1}\phi_2(\gq)^{-1})^c\cdot h=0$.
\end{itemize}

Since $h\neq 0$, we deduce that 
\begin{equation}\label{e: tq}
\chi_1(\gq)^{-1}+N(\gq)\chi_2(\gq)^{-1}=\phi_1(\gq)^{-1}+N(\gq)\phi_2(\gq)^{-1},   \text{ for all $\gq\in S$;}
\end{equation}
\begin{equation}\label{e: diam}
\chi_1(\gq)^{-1}\chi_2(\gq)^{-1}=\phi_1(\gq)^{-1}\phi_2(\gq)^{-1}, \text{ for all $\gq\in S$}. 
\end{equation}

Multiplying (\ref{e: tq}) by $\chi_1(\gq)^{-1}$, substituting (\ref{e: diam}) and factorizing, for each $\gq\in S$ we have
\begin{equation}\label{e: final}
(\chi_1(\gq)^{-1}-\phi_1(\gq)^{-1})(\chi_1(\gq)^{-1}-N(\gq)\phi_2(\gq)^{-1})=0. 
\end{equation}

Suppose both factors on equation (\ref{e: final}) vanish for some prime $\fr\in S$, then the characters $(\chi_1^{-1}\phi_1)_\fr,(\chi_1^{-1}\phi_2|\cdot|_{\A_K})_{\fr}:K_{\fr}^\times\rightarrow\C^\times$ are trivial, and we obtain from \cite[Prop 3]{rajan2000refinement} that the characters
$\chi_1^{-1}\phi_1$ and $\chi_1^{-1}\phi_2|\cdot|_{\A_K}$ have finite order. This implies that $\chi_1$ has infinity type $(k_1,\ell_1)$ and $(k_2+1,\ell_2+1)$ at the same time, which contradicts the hypothesis on the infinity types of $\phi_1$ and $\phi_2$. Note that once either $\chi_1(\fr)^{-1}-\phi_1(\fr)^{-1}=0$ or $\chi_1(\fr)^{-1}-N(\fr)\phi_2(\fr)^{-1}=0$, this determines the infinity type of $\chi_1$ and then either $\chi_1(\gq)^{-1}-\phi_1(\gq)^{-1}=0$ or $\chi_1(\gq)^{-1}-N(\gq)\phi_2(\gq)^{-1}=0$ for all $\gq\in S$

Suppose that the first factor of (\ref{e: final}) vanishes, i.e,
$\chi_1(\gq)=\phi_1(\gq)$ for all $\gq\in S$. Since $\chi_{1,\infty}=\phi_{1,\infty}$, we have that $\chi_1\phi_1^{-1}$ defines a character on $\mathrm{Cl}_K(\gf)$ where $\gf=\gn\gf_1\gf_2$, then to prove the Hecke characters $\chi_1,\phi_1$ are equal it suffices to prove that $\chi_1\phi_1^{-1}$ is the trivial character on $\mathrm{Cl}_K(\gf)$. 

Let $\gq_1,\gq_2,..\gq_t$ be prime ideals representing $\mathrm{Cl}_K(\gf)$. Similarly as the proof of \cite[Thm 3.6]{atwill2013newform}, if $\chi_1(\gq_i)=\phi_1(\gq_i)$ for $s$ values of $i$, then the density of primes for which $\chi_1(\gq)=\phi_1(\gq)$ is at most $s/t$. It now follows that $\chi_1(\gq_i)=\phi_1(\gq_i)$ for
more than $t/2$ values of $i$.

The orthogonality relations on characters show that $\sum_{i=1}^t\chi_1\phi_1^{-1}(\gq_i)=0$ unless $\chi_1\phi_1^{-1}$ is the trivial character on $\mathrm{Cl}_K(\gf)$. Since we showed that $\chi_1\phi_1^{-1}(\gq_i)=1$ for more than $t/2$ of the $\gq_i$, then $\chi_1\phi_1^{-1}$ is the trivial character on $\mathrm{Cl}_K(\gf)$ and we conclude that $\chi_1=\phi_1$. 

By equation (\ref{e: diam}), we have that $\chi_2^{-1}(\gq)\phi_2(\gq)=1$ for all $\gq\in S$, therefore $(\chi_2^{-1}\phi_2)_\gq$ is trivial and as before, using \cite[Prop 3]{rajan2000refinement} we obtain that $\chi_2^{-1}\phi_2$ has finite order. Thus $\chi_2^{-1}\phi_2$ determines a character on $\mathrm{Cl}_K(\gf)$, which by equation (\ref{e: diam}), is the trivial character, and we deduce that $\chi_2=\phi_2$.

If the second factor of equation (\ref{e: final}) vanishes for all $\gq\in S$, we proceed exactly as before to deduce that $\chi_1=\phi_2|\cdot|_{\A_K}$ and $\chi_2=\phi_1|\cdot|_{\A_K}^{-1}$.

We conclude that for each case $(\chi_1,\chi_2)=(\phi_1,\phi_2)$ or $(\chi_1,\chi_2)= (\phi_2|\cdot|_{\A_K},\phi_1|\cdot|_{\A_K}^{-1})$, the space $V_{(\chi_1,\chi_2)_f}^{\K_1(\fn)}[[\gm_{\phi, S}]]$ is a 1-dimensional $\C$-vector space by Lemma \ref{l: eigen}.
\end{proof}

\begin{remark}\label{r: 0dim result}
\begin{enumerate}
    \item[i)] We will apply this theorem to obtain a multiplicity-one result on the cohomology of our threefold in Theorem \ref{t: main theorem}. Observe that the hypothesis on the weights above are satisfied (see Remark \ref{r: dimesnion 1 borde}).
    \item[ii)] From the above theorem we deduce directly that if the infinity type of $(\chi_1,\chi_2)$ is different to $[(k_1,\ell_1),(k_2,\ell_2)]$ and $[(k_2+1,\ell_2+1),(k_1-1,\ell_1-1)]$, then $V_{(\chi_1,\chi_2)_f}^{\K_1(\fn)}[[\gm_{\phi, S}]]=\{0\}$, which we record here for future reference.
\end{enumerate}
\end{remark}

\subsubsection{Multiplicity-one with level at $p$}\label{sss: mult one old}

The goal of this section is to prove multiplicity-one results as in the previous section for eigensystems which are old at $p$. We start with the following corollary of Lemma \ref{l: eigen}.
\begin{corollary}\label{c: p-estab}
Let $\chi_1,\chi_2$ be as in Lemma \ref{l: eigen} and let $\fr$ be a prime not dividing $\gm$. 
Then there is a Hecke equivariant decomposition $V_{\chi_f}^{\K_1(\gm,\fr)}=V_1\oplus V_2$ with $V_1,V_2$ 1-dimensional $\C$-vector spaces. Moreover, if  $i\in\{1,2\}$ then any $h\in V_i$ is an eigenvector of:
\begin{itemize}
\item $T_\gq$ for $\gq\nmid\gm\fr$ (resp. $U_\gq$ for $\gq|\gm$) with eigenvalues $N(\fq) \chi_2(\gq)^{-1}+  \chi_1(\gq)^{-1};$
\item $U_\fr$ with eigenvalue 
$N(\fr)^{i-1}\chi_i(\fr)^{-1};$ 
\item $T_{\gq,\gq}$ for $\gq\nmid\gm\fr$, with eigenvalues $\chi_1(\gq)^{-1}\chi_2(\gq)^{-1}.$ 
\end{itemize}
\end{corollary}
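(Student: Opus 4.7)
The plan is to factor the local factor at $\fr$ off from the rest of the restricted tensor product, handle the outside places by quoting Lemma \ref{l: eigen}, and at $\fr$ carry out the classical Iwahori-level $p$-stabilization. At every prime $\fq\neq\fr$ one has $\K_1(\gm,\fr)_\fq=\K_1(\gm)_\fq$, so (the proof of) Lemma \ref{l: eigen} supplies a one-dimensional space $V_{\chi_\fq}^{\K_1(\gm)_\fq}$ generated by an explicit vector $h_\fq$ realising the Hecke eigenvalues stated in the corollary for $T_\fq$ (or $U_\fq$ when $\fq\mid\gm$) and $T_{\fq,\fq}$. At $\fq=\fr$ the level is the Iwahori $I_\fr$ and, since $\fr\nmid\gm$, both $\chi_{1,\fr}$ and $\chi_{2,\fr}$ are unramified; by Casselman's theorem \cite[Thm 1]{casselman1973some} (or a direct Bruhat argument) $V_{\chi_\fr}^{I_\fr}$ is two-dimensional. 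Hence the factorisation
\[
V_{\chi_f}^{\K_1(\gm,\fr)} \;=\; V_{\chi_\fr}^{I_\fr}\otimes \bigotimes_{\fq\neq\fr} V_{\chi_\fq}^{\K_1(\gm)_\fq}
\]
reduces the whole problem to diagonalising $U_\fr$ on the two-dimensional factor.

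For the local computation at $\fr$, I would use the Bruhat decomposition $\mathrm{GL}_2(K_\fr) = B(K_\fr) I_\fr \sqcup B(K_\fr) w I_\fr$ with $w=\smallmatrixx{0}{1}{1}{0}$ to write down a natural basis $\{f_1,f_2\}$ of $V_{\chi_\fr}^{I_\fr}$: declare $f_1(bk)=\chi_\fr(b)$, $f_1(bwk)=0$, $f_2(bk)=0$, $f_2(bwk)=\chi_\fr(b)$ for $b\in B(K_\fr)$ and $k\in I_\fr$, which is well-defined precisely because $\chi_{1,\fr}$ and $\chi_{2,\fr}$ are unramified. Combined with the coset decomposition
\[
I_\fr\smallmatrixx{1}{0}{0}{\varpi_\fr}I_\fr = \bigsqcup_{a\in\cO_\fr/\varpi_\fr}\smallmatrixx{1}{a}{0}{\varpi_\fr}I_\fr,
\]
direct evaluation of $U_\fr f_i$ at the Bruhat representatives $e$ and $w$ -- sorting each matrix $w\smallmatrixx{1}{-a\varpi_\fr^{-1}}{0}{\varpi_\fr^{-1}}$ into the appropriate double coset according to whether $a\in\cO_\fr^\times$ modulo $\varpi_\fr$ or not -- produces the matrix
\[
[U_\fr]_{\{f_1,f_2\}} = \begin{pmatrix} N(\fr)\chi_2(\fr)^{-1} & 0 \\ * & \chi_1(\fr)^{-1} \end{pmatrix},
\]
which is lower triangular with distinct eigenvalues $N(\fr)\chi_2(\fr)^{-1}$ and $\chi_1(\fr)^{-1}$ (in the generic case tacitly required by the statement). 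Diagonalising yields a basis $\{g_1,g_2\}$ of $V_{\chi_\fr}^{I_\fr}$ with $U_\fr g_i = N(\fr)^{i-1}\chi_i(\fr)^{-1} g_i$.

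To finish, set $V_i := \C\cdot g_i\otimes \bigotimes_{\fq\neq\fr} V_{\chi_\fq}^{\K_1(\gm)_\fq}$. Each $V_i$ is one-dimensional and the decomposition $V_{\chi_f}^{\K_1(\gm,\fr)} = V_1\oplus V_2$ is Hecke equivariant: the operators at $\fq\neq\fr$ act through the outside tensor factor with the eigenvalues supplied by Lemma \ref{l: eigen}, while $U_\fr$ acts on the $\fr$-factor with eigenvalue $N(\fr)^{i-1}\chi_i(\fr)^{-1}$ on $V_i$ by construction. The main technical step is the matrix computation at $\fr$; the only subtlety is tracking which Iwahori translates of $w\smallmatrixx{1}{-a\varpi_\fr^{-1}}{0}{\varpi_\fr^{-1}}$ land in $B(K_\fr)I_\fr$ versus $B(K_\fr)wI_\fr$, and this is exactly the standard bookkeeping underlying the classical $p$-stabilisation and presents no substantive difficulty.
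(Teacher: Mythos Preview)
Your approach is correct and follows the same route as the paper: factor off the local component at $\fr$, identify $V_{\chi_\fr}^{I_\fr}$ as two-dimensional via the Bruhat decomposition $\mathrm{GL}_2(K_\fr)=B(K_\fr)I_\fr\sqcup B(K_\fr)wI_\fr$, and read off the two $U_\fr$-eigenvalues from the resulting $2\times 2$ matrix. Your explicit observation that the Bruhat basis yields a lower-triangular (rather than diagonal) matrix for $U_\fr$, followed by diagonalisation, is in fact more careful than the paper's argument, which asserts that the basis vectors $h_1,h_2$ are already $U_\fr$-eigenvectors; strictly speaking only $h_2$ is (with eigenvalue $\chi_1(\fr)^{-1}$), while $U_\fr h_1$ picks up an off-diagonal contribution at $w$, though the diagonal entries---and hence the eigenvalues and the corollary---are exactly as stated.
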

\begin{proof}
First note that for each prime $\gq\neq\gr$ the local  representations $V_{\chi_\gq}^{\K_1(\gm)_\gq}$ and $V_{\chi_\gq}^{\K_1(\gm,\fr)_\gq}$ are equal, then we are only interested in the local representation $V_{\chi_\gr}^{\K_1(\gm,\fr)_\gr}=V_{\chi_\gr}^{I_\gr}$.

By Bruhat decomposition for $\mathrm{GL}_2(K_\gr)$, $I_\gr$ has two orbits on $\mathbb{P}^1(\cO_\gr)$ represented by $id=\smallmatrixx{1}{0}{0}{1}$ and $w=\smallmatrixx{0}{1}{1}{0}$. In particular,
$$B(K_\gr)\backslash \mathrm{GL}_2(K_\gr)/ I_\gr= \{[id],[w]\},$$ and then
$V_{\chi_\gr}^{I_\gr}$ is a 2-dimensional $\C$-vector space. Now, we describe an explicit basis of $U_{\fr}$-eigenvectors of $V_{\chi_\gr}^{I_\gr}$ by defining  $h_1,h_2\in V_{\chi_\gr}$ by the formula:
$$
h_1(g)= \begin{cases}
			\chi_{1, \fq}(a) \chi_{2, \fq}(d) & \text{if $g= \smallmatrixx{a}{b}{0}{d}\cdot k\in B(K_{\fq}) I_\gr$}\\
           0  & \text{if not,}
\end{cases}
$$
$$
h_2(g)= \begin{cases}
			\chi_{1, \fq}(a) \chi_{2, \fq}(d) & \text{if $g= \smallmatrixx{a}{b}{0}{d}\cdot w\cdot k\in B(K_{\fq})w I_\gr$}\\
           0  & \text{if not,}
\end{cases}
$$
noting that $h_1,h_2\in V_{\chi_\gr}^{I_\gr}$ and by the proof of Lemma \ref{l: eigen} each of them has the $U_\gr$-eigenvalue desired.
\end{proof}

Let $\phi= (\phi_1, \phi_2)$ be a couple of Hecke characters of $K$, with $\phi_1,\phi_2$ of conductors $\fn_1,\fn_2$ respectively, such that $(\gn_1,\gn_2)=1$ and $p\nmid\gn:= \fn_1 \fn_2$. For $\gq|p$ we consider the Hecke polynomial of $\phi$ given by 
\begin{equation}\label{e: hecke polynomial}
    x^2-(\phi_1(\gq)^{-1}+\phi_2(\gq)^{-1}N(\gq))x+ \phi_1(\gq)^{-1}\phi_2(\gq)^{-1}N(\gq)
\end{equation}
which has roots $\alpha_\gq=\phi_2(\gq)^{-1}N(\gq)$ and $\beta_\gq=\phi_1(\gq)^{-1}$.  

Motivated by the previous corollary we introduce the following ideal. For any set $S$ of primes of $K$ not dividing $p\gn$, and choices of $x_{\fq}\in \{\alpha_{\fq}, \beta_{\fq}\}$ for each $\fq\mid p$, we consider the following ideal of $\cH_{\fn,p}\otimes_{\Z}\C$: 
\begin{align} \label{e: ideal p-estabilizado}
    \gm_{\tilde{\phi},S}:= & (\{T_\gq-\phi_1(\gq)^{-1}-N(\fq)\phi_2(\gq)^{-1},T_{\gq,\gq}-\phi_1(\gq)^{-1}\phi_2(\gq)^{-1} : \gq\in S\} \\ 
\nonumber & \cup  \{U_\gq-x_\gq:\gq|p\}).
\end{align}

Moreover, we will just write $\gm_{\tilde\phi}$ when $S=\{\gq \text{ prime }: \gq\nmid p\gn\}$. 

\begin{corollary}\label{c: dim1borde} 
Suppose $S$ has Dirichlet density greater than $1/2$ and let $\phi_1,\phi_2,\chi_1,\chi_2$ be Hecke characters as in Theorem \ref{t: dim1borde} such that $p\nmid\gn$. If $V_{(\chi_1,\chi_2)_f}^{\K_1(\fn,p)}[[\gm_{\tilde{\phi},S}]]\neq \{0\}$ then either $(\chi_1,\chi_2)=(\phi_1,\phi_2)$ or $(\chi_1,\chi_2)= (\phi_2|\cdot|_{\A_K},\phi_1|\cdot|_{\A_K}^{-1})$. Moreover, we have that $V_{(\chi_1,\chi_2)_f}^{\K_1(\fn,p)}[[\gm_{\tilde{\phi},S}]]$ is a 1-dimensional $\C$-vector space. 
\end{corollary}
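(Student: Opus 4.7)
The plan is to bootstrap Theorem \ref{t: dim1borde} and Corollary \ref{c: p-estab}, exploiting that every $\gq \in S$ is coprime to $p\gn$, so the Hecke operators $T_\gq, T_{\gq,\gq}$ indexed by $S$ act on the $\K_1(\fn,p)$-invariants by exactly the same local formulas as on the $\K_1(\fn)$-invariants. I would begin by shrinking $S$ to primes of degree $1$ avoiding $p\gn\gf_1\gf_2\cD$, where $\gf_1, \gf_2$ denote the conductors of $\chi_1, \chi_2$; as in the proof of Theorem \ref{t: dim1borde} this preserves the Dirichlet density.

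Next, fix $0 \neq h \in V_{(\chi_1,\chi_2)_f}^{\K_1(\fn,p)}[[\gm_{\tilde\phi,S}]]$. For $\gq \in S$ one has $\K_1(\fn,p)_\gq = \mathrm{GL}_2(\cO_\gq) = \K_1(\fn)_\gq$, so the local computation from the proof of Lemma \ref{l: eigen} still applies and gives
$$T_\gq \cdot h = (\chi_1(\gq)^{-1} + N(\gq)\chi_2(\gq)^{-1})\, h, \quad T_{\gq,\gq}\cdot h = \chi_1(\gq)^{-1}\chi_2(\gq)^{-1}\, h.$$
Combining these with the generalized eigenvalue relations from $\gm_{\tilde\phi,S}$ reproduces the identities (\ref{e: tq})--(\ref{e: final}) of Theorem \ref{t: dim1borde} verbatim, and the same orthogonality-of-characters argument then forces either $(\chi_1,\chi_2)=(\phi_1,\phi_2)$ or $(\chi_1,\chi_2)=(\phi_2|\cdot|_{\A_K}, \phi_1|\cdot|_{\A_K}^{-1})$. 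In particular, in both cases $\chi_1, \chi_2$ are unramified at $\gp$ and $\overline{\gp}$, which is what legitimises applying the local theory below at primes of $p$.

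For the dimension statement I would apply Corollary \ref{c: p-estab} at $\gp$ and $\overline{\gp}$ to obtain $V_{(\chi_1,\chi_2)_\gq}^{I_\gq} = V_1^{(\gq)} \oplus V_2^{(\gq)}$, where the two $1$-dimensional summands carry $U_\gq$-eigenvalues $\chi_1(\gq)^{-1}$ and $N(\gq)\chi_2(\gq)^{-1}$. A direct check using $\alpha_\gq = N(\gq)\phi_2(\gq)^{-1}$ and $\beta_\gq = \phi_1(\gq)^{-1}$ shows that for \emph{both} candidate pairs $(\chi_1,\chi_2)$ this set equals $\{\alpha_\gq,\beta_\gq\}$ (the two pairs simply swap which summand carries which eigenvalue). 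Hence the condition $U_\gq - x_\gq$ from $\gm_{\tilde\phi,S}$ selects exactly one summand at each $\gq \mid p$, and combining with the $1$-dimensional factors at primes away from $p$ produced by Lemma \ref{l: eigen} yields a $1$-dimensional space, as required. The only delicate point is this bookkeeping of local $U_\gq$-eigenvalues under the swap $(\phi_1,\phi_2) \leftrightarrow (\phi_2|\cdot|_{\A_K}, \phi_1|\cdot|_{\A_K}^{-1})$; no substantive new argument beyond Theorem \ref{t: dim1borde} and Corollary \ref{c: p-estab} is required.
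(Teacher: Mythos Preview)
Your argument is correct and follows essentially the same route as the paper: first invoke the proof of Theorem \ref{t: dim1borde} (noting, as you do, that for $\gq\in S$ the local factor of $\K_1(\fn,p)$ agrees with that of $\K_1(\fn)$), and then apply Corollary \ref{c: p-estab} at each $\gq\mid p$ to split the $4$-dimensional space $V_{(\chi_1,\chi_2)_f}^{\K_1(\fn,p)}$ into lines indexed by $(x_\gp,x_{\overline\gp})$, with the explicit eigenvalue check you carry out matching the paper's observation that both candidate pairs yield the same $U_\gq$-eigenvalue set $\{\alpha_\gq,\beta_\gq\}$. Your write-up is slightly more explicit about the local bookkeeping than the paper's, but no new idea is involved.
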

\begin{proof}
First, note that the same proof of Theorem \ref{t: dim1borde} gives us that either $(\chi_1,\chi_2)=(\phi_1,\phi_2)$ or $(\chi_1,\chi_2)= (\phi_2|\cdot|_{\A_K},\phi_1|\cdot|_{\A_K}^{-1})$.

By using Corollary \ref{c: p-estab} twice, we have $$V_{(\phi_1,\phi_2)_f}^{\K_1(\gn,p)}\cong V^{\alpha_\gp,\alpha_{\overline{\gp}}}\oplus V^{\alpha_\gp,\beta_{\overline{\gp}}}\oplus V^{\beta_\gp,\alpha_{\overline{\gp}}}\oplus V^{\beta_\gp,\beta_{\overline{\gp}}}, $$ with $V^{x_\gp,x_{\overline{\gp}}}$ a 1-dimensional $\C$-vector space for $x_{\gq}\in \{\alpha_{\gq}, \beta_{\gq}\}$ with $\gq|p$.  Moreover, let $h\in V^{x_\gp,x_{\overline{\gp}}}$, then 
\begin{itemize}
\item $T_\gq\cdot h=(\phi_1(\gq)^{-1}+N(\fq)\phi_2(\fq)^{-1})h$ for all $\gq\nmid\gn p$,
\item $T_{\gq,\gq}\cdot h=\phi_1(\gq)^{-1}\phi_2(\gq)^{-1}h$ for all $\gq\nmid\gn p$,
\item $U_\gq\cdot h=x_\gq h$ for $\gq|p$.
\end{itemize}
Since the spaces $V_{(\phi_1,\phi_2)_f}^{\K_1(\gn,p)}$ and $V_{(\phi_2|\cdot|_{\A_K},\phi_1|\cdot|_{\A_K}^{-1})_f}^{\K_1(\fn,p)}$ have the same eigenvalues, the latter can be written as a direct sum of four 1-dimensional $\C$-vector spaces having respectively the same eigenvalues of the $V^{x_\gp,x_{\overline{\gp}}}$'s above.

We conclude that for each case $(\chi_1,\chi_2)=(\phi_1,\phi_2)$ or $(\chi_1,\chi_2)= (\phi_2|\cdot|_{\A_K},\phi_1|\cdot|_{\A_K}^{-1})$, the space
$V_{(\chi_1,\chi_2)_f}^{\K_1(\fn,p)}[[\gm_{\tilde{\phi},S}]]\cong V^{x_\gp,x_{\overline{\gp}}}$ is a 1-dimensional $\C$-vector space.  
\end{proof}

\begin{remark}\label{r: 0dim result p-estab}
Analogously as part ii) of Remark \ref{r: 0dim result}, we record here for future reference that from the above corollary we deduce that if the infinity type of $(\chi_1,\chi_2)$ is different to $[(k_1,\ell_1),(k_2,\ell_2)]$ and $[(k_2+1,\ell_2+1),(k_1-1,\ell_1-1)]$, then $V_{(\chi_1,\chi_2)_f}^{\K_1(\fn)}[[\gm_{\tilde{\phi},S}]]=\{0\}$.
\end{remark}

\subsection{Cohomology of threefolds}\label{ss: classical cohomology}

Let $\K_f$ be an open compact subgroup of $\mathrm{GL}_2(\widehat{\cO}_K)$ and put $\K_\infty=\mathrm{SU}_2(\C)\C^\times$, we define the Bianchi threefold of level $\K_f$ to be the locally symmetric space
\begin{equation*}
Y_{\K_f}:=\mathrm{GL}_2(K)\backslash\mathrm{GL}_2(\A_K)/\K_\infty\K_f.
\end{equation*}

Let $\cR$ be a sheaf on $Y_{\K_f}$, we denote by $H^i(Y_{\K_f},\cR)$  (resp. $H_c^i(Y_{\K_f},\cR)$) the $i$-th singular cohomology group of $\cR$ (resp. with compact support). There is a homotopy-equivalence $Y_{\K_f}\hookrightarrow\overline{Y}_{\K_f}$, where $\overline{Y}_{\K_f}$ denotes the Borel-Serre compactification of $Y_{\K_f}$, and we obtain the long exact sequence
\begin{equation}\label{e: exact sequence}
...\rightarrow H_c^i(Y_{\K_f},\cR)\rightarrow H^i(Y_{\K_f},\cR)\rightarrow H^i(\partial \overline{Y}_{\K_f},\tilde{\cR})\rightarrow H_c^{i+1}(Y_{\K_f},\cR)\rightarrow...
\end{equation}
where $\tilde{\cR}$ is a certain extension of $\cR$ to $\overline{Y}_{\K_f}$.

Let $A$ be a ring and $R$ be an $A$-module endowed with an action of $\mathrm{GL}_2(K)$. Let $\cR$ be the corresponding local system over $Y_{\cK_1(\fn)}$, then the Hecke algebra $\cH_{\fn}\otimes_{\Z}A$ acts on each cohomology group in (\ref{e: exact sequence}) (for more details see \cite[p.346-7]{hida1988p}, \cite[\S3,\S8]{hida1994critical}, \cite[\S 2.1]{urban1995formes}). In the same way, when $p\nmid \fn$ we have an action of the Hecke algebra $\cH_{\fn, p}\otimes_{\Z}A$ on each cohomology group appearing in (\ref{e: exact sequence}).

We are interested on certain sheaves over $\overline{Y}_{\K_f}$ corresponding to irreducible representations of $\mathrm{SU}_2(\C)$. Let $k,\ell$ be two non-negative integers. If $A$ is a $K$-algebra we denote by $V_{k,\ell}(A)$ the space of polynomials over $A$ that are homogeneous of degree $k$ in two variables $x,y$ and homogeneous of degree $\ell$ in two further variables $\overline{x}, \overline{y}$ with the $\mathrm{GL}_2(K)$-action 
\begin{equation*}
\smallmatrixx{a}{b}{c}{d}\cdot P\left[\matrixxx{x}{y},\matrixxx{\overline{x}}{\overline{y}}\right]=P\left[\matrixxx{dx+by}{cx+ay},\matrixxx{\overline{d}\overline{x}+\overline{b}\overline{y}}{\overline{c}\overline{x}+\overline{a}\overline{y}}\right].
\end{equation*}
Let $V_{k,\ell}^*(A)$ be the dual space of $V_{k,\ell}(A)$ and denote by $\cV_{k,\ell}^*(A)$ the corresponding local system on the space $\overline{Y}_{\K_f}$. 

\begin{remark}{\label{moduloberger}}
For our study of the cohomology of the Bianchi threefold we will use the works \cite{harder1984eisenstein} and \cite{berger2008denominators}, 
then we elaborate on the relation between different group actions over the polynomial spaces. First note that $V_{k,\ell}^*(\C)$ is isomorphic as a right-$\mathrm{GL}_2(K)$-module to the space of polynomials that are homogeneous of degree $k$ in two variables $s,t$ and homogeneous of degree $\ell$ in two further variables $\overline{s}, \overline{t}$ endowed with the right-$\mathrm{GL}_2(K)$-action given by 
$$ \left(P|_{\smallmatrixx{a}{b}{c}{d}}\right)\left[\matrixxx{s}{t},\matrixxx{\overline{s}}{\overline{t}}\right]=P\left[\matrixxx{ds-ct}{-bs+at},\matrixxx{\overline{d}\overline{s}-\overline{c}\overline{t}}{-\overline{b}\overline{s}+\overline{a}\overline{t}}\right].$$ Moreover, the latter space endowed with its corresponding left action of $\mathrm{GL}_2(K)$ given by $\gamma\cdot P=P|_{\gamma^{-1}}$ is isomorphic to the space $M(k,\ell,-k,-\ell)_{\C}$ from \cite[\S 2.4]{berger2008denominators}.
\end{remark}

\subsection{Boundary cohomology}\label{ss: Boundary}

Let $\cK_{f}\subset \mathrm{GL}_2(\widehat{\cO}_K)$ be an open compact subgroup. We are interested in certain system of Hecke eigenvalues appearing in the cohomology groups  $H^i(\partial\overline{Y}_{\K_f},\cV_{k,\ell}^*(\C))$ for $i=0,1,2$. This groups can be completely described in terms of the spaces $V_{\chi_f}^{\K_f}$ studied in Section \ref{ss: induced representations}, explicitly:

\begin{proposition}(Harder){\label{t: boundary thm}}
We have Hecke-equivariant isomorphisms:
\begin{equation*}
H^0(\partial\overline{Y}_{\cK_{f}},\cV_{k,\ell}^*(\C))\cong \bigoplus_{\substack{\chi\;:\;T(K)\backslash T(\A_K)\rightarrow \C^\times \\ \text{of inf. type}\; [(0,0),(k,\ell)] }}V_{\chi_f}^{\cK_{f}},
\end{equation*}
\begin{equation*}
H^1(\partial\overline{Y}_{\cK_{f}},\cV_{k,\ell}^*(\C))\cong \bigoplus_{\substack{\chi\;:\;T(K)\backslash T(\A_K)\rightarrow \C^\times  \\ \text{of inf. type}\;[(k+1,0),(-1,\ell)]}} \left(V_{\chi_f}^{\cK_{f}}\oplus V_{\chi_f'}^{\cK_{f}} \right),
\end{equation*}
where $\chi':=(\chi_2|\cdot|_{\A_K},\chi_1|\cdot|_{\A_K}^{-1})$,
\begin{equation*}
H^2(\partial\overline{Y}_{\cK_{f}},\cV_{k,\ell}^*(\C))\cong \bigoplus_{\substack{\chi\;:\;T(K)\backslash T(\A_K)\rightarrow \C^\times\mathrm{of\; inf.} \\ \text{type}\; [(k+1,\ell+1)(-1,-1)]}}V_{\chi_f}^{\cK_{f}}.
\end{equation*}
\end{proposition}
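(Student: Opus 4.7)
The plan is to follow Harder's original strategy for Eisenstein cohomology of arithmetic groups, specialized to the Bianchi setting, rather than to reprove everything from scratch. The first step is to invoke Borel--Serre reduction theory to write
$$\partial\overline{Y}_{\cK_f} = \bigsqcup_{[P]} \partial_P \overline{Y}_{\cK_f},$$
where $[P]$ runs over the $\mathrm{GL}_2(K)$-conjugacy classes of proper $K$-parabolic subgroups of $\mathrm{GL}_{2/K}$. For $\mathrm{GL}_2$ only the Borel $B$ contributes, and its boundary face decomposes according to cusps as
$$\partial_B \overline{Y}_{\cK_f} \;=\; \bigsqcup_{g\,\in\, B(K)\backslash \mathrm{GL}_2(\A_K^f)/\cK_f} \Gamma_g\backslash X_B,$$
each piece being a two-torus bundle (coming from the real two-dimensional unipotent radical $U(\C)$) over a compact torus arising from the arithmetic quotient of $T(\R)$.

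The second step is to compute the cohomology of each such piece using the Hochschild--Serre spectral sequence associated with the exact sequence $1\to U\to B\to T\to 1$, combined with a Shapiro/van Est isomorphism that identifies the totality as an induction of $B$-representations up to $\mathrm{GL}_2$. The key input is the Kostant-type decomposition of the unipotent cohomology $H^q(\fu,V_{k,\ell}^*(\C))$, where $\fu$ is viewed as a four-dimensional real Lie algebra (equivalently, a two-dimensional complex one, with holomorphic and antiholomorphic parts). This decomposition produces exactly three families of torus characters: the trivial twist of the highest weight in degree $0$ giving infinity type $[(0,0),(k,\ell)]$, two mixed twists in degree $1$ giving $[(k+1,0),(-1,\ell)]$ and $[(0,\ell+1),(k,-1)]$, and the $\rho$-shifted lowest weight in degree $2$ giving $[(k+1,\ell+1),(-1,-1)]$.

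The third step is to package these character contributions into the adelic induced representations $V_{\chi_f}^{\cK_f}$ of Section \ref{ss: induced representations}. The two degree-one families are exchanged by the involution $\chi\mapsto \chi'=(\chi_2|\cdot|_{\A_K},\chi_1|\cdot|_{\A_K}^{-1})$ (this is the ``long Weyl element'' acting on the unnormalized characters appearing in the degree-one Kostant piece), so summing over $\chi$ of infinity type $[(k+1,0),(-1,\ell)]$ automatically captures both through the sum $V_{\chi_f}^{\cK_f}\oplus V_{\chi_f'}^{\cK_f}$. Hecke equivariance is then a matter of comparing the natural action on the induced representations with the one inherited from the boundary cohomology, which one checks directly at good primes using the explicit description of $T_\gq$, $T_{\gq,\gq}$ in Section \ref{ss: induced representations}.

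The main technical obstacle, as always in this circle of results, is keeping the normalizations straight. The Kostant decomposition produces weights that are naturally twisted by the half-sum of positive roots $\rho=(1,1)$ (in each of the complex and antiholomorphic directions), and one must verify that the $|\cdot|_{\A_K}^{\pm 1}$ shifts appearing in $\chi'$ are exactly those produced by this $\rho$-shift combined with the choice of the dual polynomial representation $V_{k,\ell}^*(\C)$ rather than $V_{k,\ell}(\C)$ (compare Remark \ref{moduloberger}). Once these normalizations are aligned with those of \cite{harder1984eisenstein} and \cite{berger2008denominators}, the three displayed isomorphisms follow.
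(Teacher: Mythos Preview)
Your proposal is correct and aligns with the paper's approach: the paper's own proof is simply a citation to Harder \cite[Thm 1 and \S 2.9]{harder1984eisenstein} and Berger \cite[p.13, 30]{berger2008denominators} (via Remark \ref{moduloberger}), and what you have written is precisely a sketch of Harder's argument---Borel--Serre boundary stratification by the single Borel, the Hochschild--Serre/van Est reduction to unipotent Lie algebra cohomology, the Kostant decomposition producing the four torus characters in the indicated degrees, and the adelic repackaging into the induced modules $V_{\chi_f}^{\cK_f}$. Your identification of the degree-one pairing $\chi\leftrightarrow\chi'$ with the long Weyl element twist and your caveat about the $\rho$-shift versus the dual module $V_{k,\ell}^*(\C)$ are exactly the normalization issues that Remark \ref{moduloberger} is there to address.
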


\begin{remark}\label{r: Hecke equivariant} More precisely, if $\cK_{f}=\K_1(\fn)$ then the above isomorphisms are $\cH_{\fn}$-equivariant and when $\cK_{f}=\K_1(\fn, p)$ we have $\cH_{\fn,p}$-equivariant isomorphisms.
\end{remark}

\begin{proof}
This is proven by Harder in \cite[Thm 1 and \S 2.9]{harder1984eisenstein}. See also \cite[p.13, 30]{berger2008denominators} using Remark \ref{moduloberger}, for the description of $H_\partial^i$ for $i=0,1$.   
\end{proof}

\begin{proposition}\label{p: igualdadboundary}
Let $\phi_1,\phi_2$ be Hecke characters of conductors $\gn_1$, $\gn_2$ respectively, satisfying $(\gn_1,\gn_2)=1$. Write $\fn= \fn_1 \fn_2$ and let $\gm_{\phi, S}$ be the ideal introduced in equation (\ref{e: ideal}). If $\phi=(\phi_1,\phi_2)$ has infinity type $[(k+1,0),(-1,\ell)]$ (resp. $[(k+1,\ell+1), (-1,-1)]$) and $S$ has Dirichlet density greater than $1/2$, then $$\mathrm{dim}_{\C}H^0(\partial\overline{Y}_{\K_1(\gn)},\cV_{k,\ell}^*(\C))[[\gm_{\phi,S}]]=0\;(\mathrm{resp.}\;1),$$ 
$$\mathrm{dim}_{\C}H^1(\partial\overline{Y}_{\K_1(\gn)},\cV_{k,\ell}^*(\C))[[\gm_{\phi,S}]]=2\;(\mathrm{resp.}\;0),$$
$$\mathrm{dim}_{\C}H^2(\partial\overline{Y}_{\K_1(\gn)},\cV_{k,\ell}^*(\C))[[\gm_{\phi,S}]]=0\;(\mathrm{resp.}\;1).$$ 
\end{proposition}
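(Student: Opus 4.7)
The plan is to combine Harder's decomposition (Proposition~\ref{t: boundary thm}) with the multiplicity-one statement of Theorem~\ref{t: dim1borde} together with part~(ii) of Remark~\ref{r: 0dim result}. Writing $\phi':=(\phi_2|\cdot|_{\A_K},\phi_1|\cdot|_{\A_K}^{-1})$, Theorem~\ref{t: dim1borde} reduces the question, for each character $\chi$ appearing in Harder's sum, to asking whether $\chi\in\{\phi,\phi'\}$: only such $\chi$ contribute, and each contributes a one-dimensional generalized eigenspace. The whole proof should therefore collapse to a bookkeeping exercise on infinity types.

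The first step will be to record how the involution $\chi\mapsto\chi'$ acts on infinity types. Since $|\cdot|_{\A_K}$ shifts the infinity type by $(1,1)$, if $\chi$ has infinity type $[(k_1,\ell_1),(k_2,\ell_2)]$ then $\chi'$ has infinity type $[(k_2+1,\ell_2+1),(k_1-1,\ell_1-1)]$. Thus in Case~1 (where $\phi$ has type $[(k+1,0),(-1,\ell)]$) the swap $\phi'$ has type $[(0,\ell+1),(k,-1)]$, while in Case~2 (where $\phi$ has type $[(k+1,\ell+1),(-1,-1)]$) the swap $\phi'$ has type $[(0,0),(k,\ell)]$.

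Next I will compare these against Harder's running indices $[(0,0),(k,\ell)]$, $[(k+1,0),(-1,\ell)]$ and $[(k+1,\ell+1),(-1,-1)]$ in degrees $0,1,2$ respectively, remembering that in degree $1$ each index $\chi$ contributes both $V_{\chi_f}$ and $V_{\chi'_f}$. In Case~1 only $\chi=\phi$ matches the degree-$1$ index, and it contributes once via $V_{\chi_f}=V_{\phi_f}$ and once via $V_{\chi'_f}=V_{\phi'_f}$, for a total dimension of $2$; no index matches in degrees $0$ or $2$. In Case~2 the two characters split the other way: $\phi'$ matches the degree-$0$ index and $\phi$ matches the degree-$2$ index, each giving dimension $1$, while in degree $1$ neither $\phi$ nor $\phi'$ has the required infinity type even after accounting for the $\chi\mapsto\chi'$ shift, so the dimension is $0$.

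There is no genuine obstacle here, since the hard work is done by Theorem~\ref{t: dim1borde}. The only routine check is the non-degeneracy hypothesis $(k_1,\ell_1)\neq(k_2+1,\ell_2+1)$ of that theorem for the character $\phi$, which in both cases amounts to $(k+1,\ell+1)\neq(0,0)$ and therefore holds for $k,\ell\in\N$.
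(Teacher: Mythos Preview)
Your proof is correct and follows essentially the same approach as the paper: both combine Harder's decomposition (Proposition~\ref{t: boundary thm}) with Theorem~\ref{t: dim1borde} and Remark~\ref{r: 0dim result}(ii), reducing the computation to checking which of $\phi,\phi'$ has the infinity type indexing each degree. The paper writes out only degree~$0$ and declares degrees~$1,2$ analogous, whereas you carry out the bookkeeping explicitly in all three degrees; your verification of the non-degeneracy hypothesis of Theorem~\ref{t: dim1borde} is also what the paper alludes to in Remark~\ref{r: dimesnion 1 borde}.
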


\begin{proof}
We just prove the result on degree $0$ since degrees $1,2$ are proved analogously.

By Proposition \ref{t: boundary thm} we have,
$$H^0(\partial\overline{Y}_{\K_1(\gn)},\cV_{k,\ell}^*(\C))[[\gm_{\phi,S}]]\cong \bigoplus_{\substack{\chi\;:\;T(K)\backslash T(\A_K)\rightarrow \C^\times \\ \text{of inf. type}\; [(0,0),(k,\ell)] }}V_{(\chi_1,\chi_2)_f}^{\cK_1(\fn)}[[\gm_{\phi,S}]].$$

If $\phi=(\phi_1,\phi_2)$ has infinity type $[(k+1,0),(-1,\ell)]$, then all direct summands are $\{0\}$ by part ii) of Remark \ref{r: 0dim result}, and $H^0(\partial\overline{Y}_{\K_1(\gn)},\cV_{k,\ell}^*(\C))[[\gm_{\phi,S}]]=\{0\}$.

If $\phi=(\phi_1,\phi_2)$ has infinity type $[(k+1,\ell+1), (-1,-1)]$, note that by Theorem \ref{t: dim1borde} if some direct summand $V_{(\chi_1,\chi_2)_f}^{\cK_1(\fn)}[[\gm_{\phi,S}]]\neq \{0\}$, then either $(\chi_1,\chi_2)=(\phi_1,\phi_2)$ or $(\chi_1,\chi_2)= (\phi_2|\cdot|_{\A_K},\phi_1|\cdot|_{\A_K}^{-1})$, however the former option is not possible since the infinity types are different. 

We conclude by Theorem \ref{t: dim1borde} that 
\begin{equation*}
H^0(\partial\overline{Y}_{\K_1(\gn)},\cV_{k,\ell}^*(\C))[[\gm_{\phi,S}]]\cong V_{(\phi_2|\cdot|_{\A_K},\phi_1|\cdot|_{\A_K}^{-1})_f}^{\K_1(\gn)},
\end{equation*}
is a 1-dimensional space.
\end{proof}

\begin{remark}\label{r: dimesnion 1 borde} Observe that the weights of $\phi$ satisfy the hypothesis on the weights in our multiplicity-one Theorem \ref{t: dim1borde}. This allows us to obtain our results, which are a key ingredient in the proof of one of our main results: Theorem \ref{t: main theorem}.
\end{remark}

\begin{corollary}\label{c: igualdadboundary}
Let $\phi$ be as in Proposition \ref{p: igualdadboundary} with $p\nmid\gn$. Let $\gm_{\tilde{\phi}, S}$ be the ideal introduced in equation (\ref{e: ideal p-estabilizado}). If $\phi=(\phi_1,\phi_2)$ has infinity type $[(k+1,0),(-1,\ell)]$ (resp. $[(k+1,\ell+1), (-1,-1)]$) and $S$ has Dirichlet density greater than $1/2$, then $$\mathrm{dim}_{\C}H^0(\partial\overline{Y}_{\K_1(\gn,p)},\cV_{k,\ell}^*(\C))[[\gm_{\tilde{\phi},S}]]=0\;(\mathrm{resp}\;1),$$ 
$$\mathrm{dim}_{\C}H^1(\partial\overline{Y}_{\K_1(\gn,p)},\cV_{k,\ell}^*(\C))[[\gm_{\tilde{\phi},S}]]=2\;(\mathrm{resp}\;0),$$
$$\mathrm{dim}_{\C}H^2(\partial\overline{Y}_{\K_1(\gn,p)},\cV_{k,\ell}^*(\C))[[\gm_{\tilde{\phi},S}]]=0\;(\mathrm{resp}\;1).$$ 
\end{corollary}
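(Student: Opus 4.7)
The plan is to adapt the proof of Proposition \ref{p: igualdadboundary} verbatim, simply replacing the multiplicity-one input Theorem \ref{t: dim1borde} by its level-at-$p$ refinement Corollary \ref{c: dim1borde}, and replacing the vanishing input Remark \ref{r: 0dim result}(ii) by its level-at-$p$ counterpart Remark \ref{r: 0dim result p-estab}. The key observation is that Harder's description (Proposition \ref{t: boundary thm}) of the boundary cohomology in terms of induced representations is available for any open compact $\cK_f$, in particular for $\K_1(\gn, p)$, and that by Remark \ref{r: Hecke equivariant} the resulting decomposition is equivariant for the Hecke algebra $\cH_{\gn, p}$ that contains the operators $U_\fq$ for $\fq\mid p$ appearing in $\gm_{\tilde\phi, S}$.

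Concretely, first I would apply Proposition \ref{t: boundary thm} with $\cK_f = \K_1(\gn, p)$ to decompose $H^i(\partial\overline{Y}_{\K_1(\gn, p)},\cV_{k,\ell}^*(\C))$ for $i=0,1,2$ as direct sums of spaces $V_{(\chi_1,\chi_2)_f}^{\K_1(\gn,p)}$ indexed by Hecke characters of infinity types $[(0,0),(k,\ell)]$, $[(k+1,0),(-1,\ell)]$, $[(k+1,\ell+1),(-1,-1)]$ respectively, with the additional $V_{\chi'_f}^{\K_1(\gn,p)}$ twist-summand in degree $1$. Then I would localize at $\gm_{\tilde\phi, S}$ and analyse which summands survive by matching infinity types. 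By Remark \ref{r: 0dim result p-estab}, a summand $V_{(\chi_1,\chi_2)_f}^{\K_1(\gn,p)}[[\gm_{\tilde\phi, S}]]$ can only be non-zero if the infinity type of $(\chi_1,\chi_2)$ equals either that of $\phi$ or that of the twist $\phi' = (\phi_2|\cdot|_{\A_K},\phi_1|\cdot|_{\A_K}^{-1})$, and when it is non-zero Corollary \ref{c: dim1borde} forces it to be $1$-dimensional.

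The case analysis is then routine. When $\phi$ has infinity type $[(k+1,0),(-1,\ell)]$, the type of $\phi'$ is $[(0,\ell+1),(k,-1)]$; neither matches the $H^0$-type $[(0,0),(k,\ell)]$ nor the $H^2$-type $[(k+1,\ell+1),(-1,-1)]$, so both these cohomologies vanish after localization, whereas in degree $1$ the characters $\chi = \phi$ and $\chi' = \phi'$ each contribute a $1$-dimensional summand, giving dimension $2$. When $\phi$ has infinity type $[(k+1,\ell+1),(-1,-1)]$, its twist $\phi'$ has type $[(0,0),(k,\ell)]$; under the assumption $(k,\ell)\neq (0,0)$ neither type appears in the degree-$1$ summand, so $H^1$ localizes to $0$, while $H^0$ and $H^2$ each pick up a $1$-dimensional contribution from $\phi'$ and $\phi$ respectively.

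I do not anticipate a serious obstacle here: the argument is essentially combinatorial bookkeeping of four possible infinity types against the three Harder types, and the non-trivial content (the newform-style uniqueness across $p$-stabilizations) has already been packaged in Corollary \ref{c: dim1borde}. The only point to keep in mind is that the exclusion $(k,\ell)\neq (0,0)$ in the second case is needed precisely to prevent $\phi'$ from being a character of finite order whose infinity type $[(0,0),(0,0)]$ would conflict with the degree-$0$ analysis (as noted in the remark following Theorem \ref{t: dim 1 intro}).
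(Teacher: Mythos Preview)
Your approach is exactly the paper's: the authors' proof is the single sentence ``The proof is exactly the same as the proof of Proposition \ref{p: igualdadboundary} using Corollary \ref{c: dim1borde} and Remark \ref{r: 0dim result p-estab} instead Theorem \ref{t: dim1borde} and part ii) of Remark \ref{r: 0dim result},'' which is precisely what you propose.

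One small correction: the exclusion $(k,\ell)\neq(0,0)$ you invoke in the second case is neither present in the statement nor needed here. The infinity types $[(k+1,\ell+1),(-1,-1)]$ and $[(0,0),(k,\ell)]$ never coincide with the degree-$1$ types $[(k+1,0),(-1,\ell)]$ or $[(0,\ell+1),(k,-1)]$ for any $k,\ell\geq 0$, so the $H^1$-vanishing already holds unconditionally; the hypothesis $(k,\ell)\neq(0,0)$ only enters later in the paper (Corollary \ref{c: newformseisenstein} and onwards), not in this boundary computation.
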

\begin{proof}
The proof is exactly the same as the proof of Proposition \ref{p: igualdadboundary} using Corollary \ref{c: dim1borde} and Remark \ref{r: 0dim result p-estab} instead Theorem \ref{t: dim1borde} and part ii) of Remark \ref{r: 0dim result}.
\end{proof}

\section{Bianchi Eisenstein eigensystems}\label{s: BEE}

\subsection{Non-cuspidal base change} \label{ss: basechange}
For $\cK_f$ an open compact subgroup of $\mathrm{GL}_2(\widehat{\cO}_K)$ and $(k,\ell)\in \N\times \N$ we denote by $\cM_{(k,\ell)}(\cK_{f})$ the finite-dimensional $\C$-vector space of Bianchi modular forms of weight $(k,\ell)$ and level $\cK_{f}$. 
We denote by $S_{(k,\ell)}(\cK_{f})$ the subspace of cuspidal Bianchi modular forms. From \cite[Cor 2.2]{hida1994critical} we have that $S_{(k,\ell)}(\cK_{f})=\{0\}$ if $k\neq \ell$ i.e., all non-trivial cuspidal Bianchi modular forms have parallel weight $(k,k)$. Hecke operators act on the spaces $\cM_{(k,\ell)}(\K_f)$ and $S_{(k,\ell)}(\K_f)$. In particular, if $\cK_{f}=\K_1(\fn)$ then the Hecke algebra $\cH_{\fn}\otimes_{\Z}\C$ acts on those spaces. In the same way, if $\cK_{f}=\K_1(\fn, p)$ then we have an action of $\cH_{\fn,p}\otimes_{\Z}\C$ on the corresponding spaces.  
For more details on Bianchi modular forms we refer the reader to \cite{chris2017}.

An example of a Bianchi modular form which is not cuspidal is obtained by base change from $\mathrm{GL}_{2, \Q}$. Let $\varphi: \A_K^{\times}\rightarrow \C^{\times}$ be a Hecke character of $K$ of infinity type $(-k-1,0)$ and conductor $\gm$. We put $M=N(\gm)$ and we denote by $\varphi_{\mathbb{Z}}: (\Z/ M\Z)^{\times}\rightarrow \C^{\times}$ the Dirichlet character induced by the map $a\mapsto \varphi(a\mathcal{O}_K)a^{-k-1}$, for $a\in \Z$ coprime to $M$. Let $f$ be the theta series associated to $\varphi$, which is a newform of weight $k+2$, level $\Gamma_0(DM)$ and nebentypus $\epsilon_{f}=\chi_K\varphi_{\mathbb{Z}}$ where $\chi_K: (\Z/D\Z)^{\times}\rightarrow \{\pm 1\}$ is the quadratic character of $K/\Q$.

Let $\pi$ be the automorphic representation of $\mathrm{GL}_2(\mathbb{A}_{\mathbb{Q}})$ generated by $f$ and let BC($\pi$) be the base change of $\pi$ to $\mathrm{GL}_2(\mathbb{A}_K)$ (see \cite{langlands1980base}). The \textit{base change} of $f$ to $K$ is the normalized new vector $\cF$ in BC($\pi$) which is a non-cuspidal Bianchi modular form of weight $(k, k)$, level $\K_0(\fn)$ with $M\mathcal{O}_K|\fn|DM\mathcal{O}_K$ (see \cite[\S 2.3]{friedberg1983imaginary}) and nebentypus $\epsilon_{\cF}=\epsilon_{f}\circ N_{K/\Q}$. 

In order to apply the main results of this work to this non-cuspidal base change case, we will suppose (compare with Remark 3.6 in \cite{BW21}): 
\begin{assumption}
The conductor, $\gm$, of $\varphi$ is coprime to $(p)$, $(\gm,\overline{\gm})=1$ and the level of $\cF$ is exactly $M\cO_K$.
\end{assumption}

The Hecke eigenvalues of $\cF$ can be described in terms of the eigenvalues $a_q$ of $f$, in particular in terms of $\varphi$: for every prime $\gq$ of $K$ above $q$,
$$
a_{\gq}=\begin{cases}
			a_q=\varphi(\gq)+\varphi(\overline{\gq}), & \text{if $q= \gq\overline{\gq}$}\\
            a_q=\varphi(\gq), & \text{if $q$ ramifies}\\
            a_q^2-2\chi_K(q)\varphi_{\mathbb{Z}}(q)q^{k+1}=2\varphi(\gq) & \text{if $q$ is inert.}
		 \end{cases}
$$

\subsection{Bianchi Eisenstein eigensystems}
We can ask about examples of non-cuspidal Bianchi modular forms other than the base change described in the previous section. From Harder's work we can consider Bianchi modular forms as systems of Hecke-eigenvalues appearing in the group $H^i(Y_{\K_f},\cV_{k,\ell}^*(\C))$ and we will focus on this point of view.

For each $i\in \N$, recall the Hecke-equivariant decomposition
$$H^i(Y_{\K_f},\cV_{k,\ell}^*(\C))\cong H_{cusp}^i(Y_{\K_f},\cV_{k,\ell}^*(\C)) \oplus H_{Eis}^i(Y_{\K_f},\cV_{k,\ell}^*(\C)),$$
where the cuspidal cohomology is defined by 
\begin{equation*}
H_{\mathrm{cusp}}^i(Y_{\K_f},\cV_{k,\ell}^*(\C))=\mathrm{Im}\{H_c^i(Y_{\K_f},\cV_{k,\ell}^*(\C))\rightarrow H^i(Y_{\K_f},\cV_{k,\ell}^*(\C))\},    
\end{equation*}
and its complement, the Eisenstein cohomology $H_{Eis}^i(Y_{\K_f},\cV_{k,\ell}^*(\C))$, is defined as the image of $H^i(\partial\overline{Y}_{\K_f},\cV_{k,\ell}^*(\C))$ by Harder’s Eisenstein map (see \cite{harder1984eisenstein}) 
\begin{equation*}
\mathrm{Eis}: H^i(\partial\overline{Y}_{\K_f},\cV_{k,\ell}^*(\C))\rightarrow H^i(Y_{\K_f},\cV_{k,\ell}^*(\C)).   
\end{equation*}

The following theorem describes the cuspidal and Eisenstein cohomology.

\begin{theorem}{\label{Eichler-Shimura}}(Harder)
Let $k$ and $\ell$ be two non-negative integers.

i) The following two vanishing results hold:
$$ 
\begin{array}{ll}
H_{cusp}^i(Y_{\K_f},\cV_{k,\ell}^*(\C))=0 & \textrm{unless} \;k\neq\ell \;\textrm{and either}\; i = 1 \;\mathrm{or}\; 2.   \\ 
H_{Eis}^0(Y_{\K_f},\cV_{k,\ell}^*(\C))=0 & \textrm{unless}\; k=\ell=0 \;\text{when it is equal to}\;\C. 
\end{array}
$$

ii) There are Hecke-equivariant isomorphisms.
\begin{equation*}
H_{cusp}^1(Y_{\K_f},\cV_{k,k}^*(\C))\cong H_{cusp}^2(Y_{\K_f},\cV_{k,k}^*(\C))\cong S_{k,k}(\K_f),
\end{equation*}
\begin{equation*}
H_{Eis}^1(Y_{\K_f},\cV_{k,\ell}^*(\C))\cong\bigoplus_{\substack{\chi\;:\;T(K)\backslash T(\A_K)\rightarrow \C^\times \\ \text{of inf. type}\; [(k+1,0),(-1,\ell)] }}V_{\chi_f}^{\K_f},
\end{equation*}
Moreover, if $(k,\ell)\neq(0,0)$ then 
\begin{equation*}
H_{Eis}^2(Y_{\K_f},\cV_{k,\ell}^*(\C))\cong H^2(\partial\overline{Y}_{\cK_{f}},\cV_{k,\ell}^*(\C)) \cong \bigoplus_{\substack{\chi\;:\;T(K)\backslash T(\A_K)\rightarrow \C^\times \;\text{of inf. type}\\ [(k+1,\ell+1),(-1,-1)]}}V_{\chi_f}^{\K_f}.    
\end{equation*}
\end{theorem}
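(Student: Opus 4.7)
The plan is to combine three ingredients: the Matsushima--Franke decomposition of cohomology into contributions from automorphic representations, the explicit description of boundary cohomology from Proposition \ref{t: boundary thm}, and the long exact sequence (\ref{e: exact sequence}) together with Poincaré--Lefschetz duality on the real $3$-manifold $Y_{\K_f}$.

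For part (i), the vanishing $H^i_{\mathrm{cusp}}=0$ for $i\notin\{1,2\}$ follows from the $(\mathfrak{g},K_\infty)$-cohomology of the cuspidal automorphic spectrum of $\mathrm{GL}_{2,K}$: the admissible representations of $\mathrm{GL}_2(\C)$ contributing to cuspidal cohomology are cohomological only in degrees $1$ and $2$, half the real dimension of the symmetric space $\mathrm{GL}_2(\C)/\K_\infty$. The parallel-weight restriction $k=\ell$ is Hida's Corollary~2.2 of \cite{hida1994critical}, already invoked in Section \ref{ss: basechange}. For the vanishing of $H^0_{\mathrm{Eis}}$ unless $(k,\ell)=(0,0)$: since $H^0_{\mathrm{cusp}}=0$ always, it suffices to observe that $H^0(Y_{\K_f},\cV_{k,\ell}^*(\C))$ is computed by $\Gamma$-invariants of $V_{k,\ell}^*(\C)$ for the relevant congruence subgroups $\Gamma$; these are Zariski dense in $\mathrm{GL}_{2,K}$, so the invariants agree with $\mathrm{GL}_2(K)$-invariants, which vanish for every non-trivial irreducible algebraic representation. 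In the remaining case $(k,\ell)=(0,0)$ one has $V_{0,0}^*(\C)=\C$ and recovers the line of locally constant functions.

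For part (ii), the isomorphisms $H^i_{\mathrm{cusp}}(Y_{\K_f},\cV_{k,k}^*(\C))\cong S_{k,k}(\K_f)$ for $i=1,2$ are the Eichler--Shimura description for Bianchi forms: each irreducible cuspidal automorphic representation of parallel weight $(k,k)$ contributes a one-dimensional piece to $(\mathfrak{g},K_\infty)$-cohomology in each of degrees $1$ and $2$, and the $\K_f$-fixed part of its finite component identifies Hecke-equivariantly with the corresponding space of Bianchi cusp forms. For $H^1_{\mathrm{Eis}}$, Proposition \ref{t: boundary thm} gives
\[ H^1(\partial\overline{Y}_{\K_f},\cV_{k,\ell}^*(\C))\cong\bigoplus_{\chi}\bigl(V_{\chi_f}^{\K_f}\oplus V_{\chi'_f}^{\K_f}\bigr), \]
and Harder's analysis in \cite{harder1984eisenstein}---via analytic continuation and the functional equation of the Borel Eisenstein series at the relevant evaluation point---shows that exactly one summand in each pair lies in the image of the map $\mathrm{Eis}$ while the other lies in its kernel. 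This selects the claimed direct sum.

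For $H^2_{\mathrm{Eis}}$ with $(k,\ell)\neq(0,0)$ I would use the segment
\[ H^2(Y_{\K_f},\cV_{k,\ell}^*(\C))\longrightarrow H^2(\partial\overline{Y}_{\K_f},\cV_{k,\ell}^*(\C))\longrightarrow H^3_c(Y_{\K_f},\cV_{k,\ell}^*(\C)) \]
of (\ref{e: exact sequence}). By Poincaré--Lefschetz duality, $H^3_c(Y_{\K_f},\cV_{k,\ell}^*(\C))$ is dual to $H^0(Y_{\K_f},\cV_{k,\ell}(\C))$, which vanishes by the same density argument as above; hence the restriction to the boundary is surjective in degree $2$. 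Since $H^2_{\mathrm{cusp}}$ lies in the kernel of this restriction by definition, $\mathrm{Eis}$ identifies $H^2_{\mathrm{Eis}}$ with the whole of $H^2(\partial\overline{Y}_{\K_f},\cV_{k,\ell}^*(\C))$, and Proposition \ref{t: boundary thm} supplies the final isomorphism. The main non-formal step---and the hard part of the proof---is Harder's selection of which half of the boundary $H^1$ survives in the Eisenstein cohomology; everything else reduces to formal consequences of the long exact sequence, Poincaré duality, and the $(\mathfrak{g},K_\infty)$-cohomology computation.
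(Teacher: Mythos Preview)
Your proposal is correct and follows essentially the same route as the paper, which simply defers to Taylor's thesis \cite{taylor1988congruences} (itself a compilation of Harder's results) together with Proposition~\ref{t: boundary thm}; you have unpacked what those references contain. One small linguistic wrinkle: in your $H^1_{\mathrm{Eis}}$ paragraph, speaking of summands lying ``in the image'' or ``kernel'' of $\mathrm{Eis}$ is slightly off, since $\mathrm{Eis}$ maps boundary to interior---what Harder actually shows is that $\mathrm{res}\!\restriction_{H^1_{\mathrm{Eis}}}$ has image equal to one half of $H^1_\partial$ (equivalently, the functional equation forces the Eisenstein classes built from $V_{\chi_f}$ and $V_{\chi'_f}$ to coincide up to the intertwining operator)---but the mathematical content you intend is right.
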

\begin{remark} Note that as in Remark \ref{r: Hecke equivariant}, if $\cK_{f}=\K_1(\fn)$ then the above isomorphisms are $\cH_{\fn}\otimes_{\Z}\C$-equivariant and when $\cK_{f}=\K_1(\fn, p)$ we have an $\cH_{\fn,p}\otimes_{\Z}\C$-equivariant isomorphism.
\end{remark}

\begin{proof} 
For all the results except the description of $H_{Eis}^i$ for $i=1,2$, see \cite[pag. 101]{taylor1988congruences}. The result of $H_{Eis}^1$ is a consequence of \cite[p. 105]{taylor1988congruences}, Remark \ref{moduloberger} and Proposition \ref{t: boundary thm}. The result of $H_{Eis}^2$ comes from \cite[p. 101]{taylor1988congruences} combined with Proposition \ref{t: boundary thm}.
\end{proof}

By Theorem \ref{Eichler-Shimura}, the space of cuspidal Bianchi modular forms  of weight $(k,\ell)$ and level $\K_1(\gn)$ (which is defined analytically), corresponds precisely to the space of systems of Hecke eigenvalues $\cH_\gn\otimes_{\Z}\C\rightarrow\C$ appearing in $H_{cusp}^i(Y_{\K_f},\cV_{k,\ell}^*(\C))$ for some $i \in \{1,2\}$ (for each $i\in \{ 1, 2\}$, by Theorem \ref{Eichler-Shimura} part ii)). This motivates the following definition:

\begin{definition}{\label{Bianchi Eisentein}} 
A \textit{Bianchi Eisenstein eigensystem} of level $\cK_1(\gn)$ and weight $(k,\ell)$ is a system of Hecke eigenvalues appearing in $H_{Eis}^i(Y_{\cK_1(\gn)},\cV_{k,\ell}^*(\C))$ for some $i\in \{0, 1,2\}$.
\end{definition}

\begin{remark} The index $i= 0$ in the above definition only appears when $(k, \ell)= (0, 0)$. In the main results of this text we do not study this case.     
\end{remark}


Proposition \ref{p: igualdadboundary} allows us to state the following result about Bianchi Eisenstein eigensystems.
 
\begin{corollary}\label{c: newformseisenstein} Let $\phi_1,\phi_2$ be Hecke characters of conductors $\gn_1$, $\gn_2$ respectively, satisfying $(\gn_1,\gn_2)=1$. Write $\fn= \fn_1 \fn_2$ and let $\gm_{\phi, S}$ the ideal introduced in equation (\ref{e: ideal}) with $S$ of Dirichlet density greater than $1/2$. If $\phi=(\phi_1,\phi_2)$ has infinity type $[(k+1,0),(-1,\ell)]$ (resp. $[(k+1,\ell+1), (-1,-1)]$ with $(k, \ell)\neq (0,0)$) then
$$\mathrm{dim}_{\C}H_{Eis}^1(Y_{\K_1(\gn)},\cV_{k,\ell}^*(\C))[[\gm_{\phi,S}]]=1\;(\mathrm{resp.}\;0),$$
$$\mathrm{dim}_{\C}H_{Eis}^2(Y_{\K_1(\gn)},\cV_{k,\ell}^*(\C))[[\gm_{\phi,S}]]=0\;(\mathrm{resp.}\;1).$$
\end{corollary}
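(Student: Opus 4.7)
The plan is to combine Harder's decomposition of the Eisenstein cohomology from Theorem \ref{Eichler-Shimura}(ii) with the strong multiplicity-one result in Theorem \ref{t: dim1borde} (equivalently, with Proposition \ref{p: igualdadboundary}). The overarching observation is this: Theorem \ref{Eichler-Shimura}(ii) exhibits each $H^i_{Eis}(Y_{\K_1(\gn)},\cV_{k,\ell}^*(\C))$ as a direct sum of the spaces $V_{\chi_f}^{\K_1(\gn)}$ over Hecke characters $\chi=(\chi_1,\chi_2)$ with a specific infinity type, while Theorem \ref{t: dim1borde} forces every $\chi$ with $V_{\chi_f}^{\K_1(\gn)}[[\gm_{\phi,S}]]\neq 0$ to be either $\phi$ itself or $\phi':=(\phi_2|\cdot|_{\A_K},\phi_1|\cdot|_{\A_K}^{-1})$. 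Each computation then reduces to matching the infinity types of $\phi$ and $\phi'$ against Harder's index set.

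First I would treat $\phi$ of infinity type $[(k+1,0),(-1,\ell)]$. For $H^1_{Eis}$, the index set consists of characters of infinity type $[(k+1,0),(-1,\ell)]$; only $\chi=\phi$ qualifies, because $\phi'$ has the shifted infinity type $[(0,\ell+1),(k,-1)]$. Hence $H^1_{Eis}[[\gm_{\phi,S}]]\cong V_{\phi_f}^{\K_1(\gn)}$, which is $1$-dimensional by Lemma \ref{l: eigen}. For $H^2_{Eis}$, when $(k,\ell)\neq(0,0)$ Theorem \ref{Eichler-Shimura}(ii) identifies it with $H^2(\partial\overline{Y}_{\K_1(\gn)},\cV_{k,\ell}^*(\C))$, and Proposition \ref{p: igualdadboundary} gives dimension $0$; when $(k,\ell)=(0,0)$ one instead uses the inclusion $H^2_{Eis}\hookrightarrow H^2(\partial\overline{Y}_{\K_1(\gn)},\cV_{0,0}^*(\C))$ coming from the Eisenstein map to reach the same conclusion.

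Second, for $\phi$ of infinity type $[(k+1,\ell+1),(-1,-1)]$ with $(k,\ell)\neq(0,0)$, the assumption on $(k,\ell)$ is exactly what allows Theorem \ref{Eichler-Shimura}(ii) to identify $H^2_{Eis}$ with $H^2(\partial\overline{Y}_{\K_1(\gn)},\cV_{k,\ell}^*(\C))$; Proposition \ref{p: igualdadboundary} then delivers dimension $1$. For $H^1_{Eis}$, Harder's index set runs over characters of infinity type $[(k+1,0),(-1,\ell)]$, whereas Remark \ref{r: 0dim result}(ii) forces a non-vanishing summand to have infinity type $[(k+1,\ell+1),(-1,-1)]$ (coming from $\phi$) or $[(0,0),(k,\ell)]$ (coming from $\phi'$). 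Neither matches $[(k+1,0),(-1,\ell)]$: the first would require $\ell=-1$ and the second $k=-1$, both impossible. Hence $H^1_{Eis}[[\gm_{\phi,S}]]=0$, which finishes the argument. There is no genuine obstacle beyond careful bookkeeping of infinity types; the substantive input is Theorem \ref{t: dim1borde}, and the only mildly delicate point is the $(k,\ell)=(0,0)$ exception in the first case, handled via the inclusion into boundary cohomology.
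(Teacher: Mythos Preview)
Your proposal is correct and follows essentially the same route as the paper: combine the Hecke-equivariant description of $H^i_{Eis}$ from Theorem \ref{Eichler-Shimura}(ii) with Theorem \ref{t: dim1borde}/Remark \ref{r: 0dim result}(ii) in degree $1$, and with the identification $H^2_{Eis}\cong H^2_\partial$ plus Proposition \ref{p: igualdadboundary} in degree $2$. Your treatment of the $(k,\ell)=(0,0)$ subcase via the injection $H^2_{Eis}\hookrightarrow H^2_\partial$ is exactly how the paper handles it as well (it simply records $\dim H^2_\partial[[\gm_{\phi,S}]]=0$ there); your explicit infinity-type bookkeeping for $\phi'$ is a slight elaboration of what the paper leaves implicit.
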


\begin{proof}
We start with the results in degree 1. 
Suppose $\phi$ has infinity type $[(k+1,0),(-1,\ell)]$, then by part ii) of Theorem \ref{Eichler-Shimura} together with Theorem \ref{t: dim1borde} we have that
$$H_{Eis}^1(Y_{\K_1(\gn)},\cV_{k,\ell}^*(\C))[[\gm_{\phi,S}]]\cong\bigoplus_{\substack{\chi\;:\;T(K)\backslash T(\A_K)\rightarrow \C^\times, \text{ of}\\ \text{inf. type}\; [(k+1,0),(-1,\ell)] }}V_{\chi_f}^{\K_1(\fn)}[[\gm_{\phi,S}]]=V_{(\phi_1,\phi_2)_f}^{\K_1(\fn)},$$
is a 1-dimensional $\C$-vector space. 

Consider now $\phi$ of infinity type $[(k+1,\ell+1), (-1,-1)]$ with $(k, \ell)\neq (0,0)$, then by part ii) of Remark \ref{r: 0dim result} we obtain $H_{Eis}^1(Y_{\K_1(\gn)},\cV_{k,\ell}^*(\C))[[\gm_{\phi,S}]]=\{0\}$.

We now proceed with the results on degree 2. Suppose first $(k,\ell)\neq (0, 0)$, then we have
$$H_{Eis}^2(Y_{\K_1(\gn)},\cV_{k,\ell}^*(\C))\cong H^2(\partial\overline{Y}_{\K_1(\gn)},\cV_{k,\ell}^*(\C)),$$
(see \cite{taylor1988congruences}), and by Proposition \ref{p: igualdadboundary} we have the result.

For the remaining case in degree $2$ where $\phi$ has weight $[(1,0),(-1,0)]$, we obtain the result since $\mathrm{dim}_{\C}H^2(\partial Y_{\K_1(\gn)},\cV_{k,\ell}^*(\C))[[\gm_{\phi,S}]]=0.$ 
\end{proof}

Note that in particular, the previous corollary gives us that each $\phi=(\phi_1,\phi_2)$ determines a Bianchi Eisenstein eigensystem of level $\cK_1(\gn)$ and weight $(k,\ell)$.

Recall that we write $\gm_{\phi}$ instead of $\gm_{\phi, S}$ when $S=\{\gq \text{ prime }: \gq\nmid\gn\}$. 

\begin{theorem}{\label{t: main theorem}}
Let $k,\ell$ be two non-negative integers, let $\phi_1,\phi_2$ be two Hecke characters of conductors $\gn_1$, $\gn_2$ such that $(\gn_1,\gn_2)=1$, and write $\fn= \fn_1 \fn_2$.

i) If $(k,\ell)\neq(0,0)$ then the cohomology groups $H_{?}^i(Y_{\K_f},\cV_{k,\ell}^*(\C))$ for $?\in\{\emptyset, c\}$ are concentrated in degree 1 and 2.

ii) If $\phi=(\phi_1,\phi_2)$ has infinity type $[(k+1,0),(-1,\ell)]$ (resp. $[(k+1,\ell+1), (-1,-1)]$ with $(k, \ell)\neq (0, 0)$) then: 
$$\mathrm{dim}_{\C}H^1(Y_{\K_1(\gn)},\cV_{k,\ell}^*(\C))[[\gm_\phi]]=\mathrm{dim}_{\C}H_c^2(Y_{\K_1(\gn)},\cV_{k,\ell}^*(\C))[[\gm_\phi]]=1 \;(\mathrm{resp.}\; 0),$$ 
$$
\mathrm{dim}_{\C}H^2(Y_{\K_1(\gn)},\cV_{k,\ell}^*(\C))[[\gm_\phi]]=\mathrm{dim}_{\C}H_c^1(Y_{\K_1(\gn)},\cV_{k,\ell}^*(\C))[[\gm_\phi]]=0 \;(\mathrm{resp.}\; 1).$$
\end{theorem}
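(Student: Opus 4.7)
My plan is to reduce the theorem to the boundary and Eisenstein dimension counts from Proposition~\ref{p: igualdadboundary} and Corollary~\ref{c: newformseisenstein}, together with a strong multiplicity one argument and a diagram chase in the Borel--Serre long exact sequence.

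For part (i) I would use that $Y_{\K_f}$ is a non-compact real $3$-manifold, so $H^i(Y_{\K_f},\cV_{k,\ell}^*(\C))=0$ for $i\geq 3$ and $H^0_c(Y_{\K_f},\cV_{k,\ell}^*(\C))=0$ (as $H^0_c\hookrightarrow H^0$ from the long exact sequence~\eqref{e: exact sequence}). Theorem~\ref{Eichler-Shimura}(i) then gives $H^0(Y_{\K_f},\cV_{k,\ell}^*(\C))=0$ when $(k,\ell)\neq(0,0)$, since both the cuspidal and Eisenstein pieces vanish. Finally, Poincar\'e--Lefschetz duality identifies $H^3_c(Y_{\K_f},\cV_{k,\ell}^*(\C))$ with the dual of $H^0(Y_{\K_f},\cV_{k,\ell}(\C))$, which vanishes for the same reason applied to the dual coefficient system.

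For part (ii) I would handle the full cohomology via the decomposition $H^i=H^i_{cusp}\oplus H^i_{Eis}$. The cuspidal part localized at $\gm_\phi$ would vanish by strong multiplicity one: a cuspidal Hecke eigensystem belonging to $\gm_\phi$ would have Satake parameters matching those of the principal series $\pi(\phi_1,\phi_2)$ at every prime $\gq\nmid\gn$ (via the Hecke polynomial~\eqref{e: hecke polynomial}), and strong multiplicity one for $\mathrm{GL}_2/K$ would force the corresponding cuspidal automorphic representation to be globally isomorphic to $\pi(\phi_1,\phi_2)$, contradicting cuspidality. Corollary~\ref{c: newformseisenstein} then computes $H^i_{Eis}[[\gm_\phi]]$, yielding the stated dimensions of $H^i(Y_{\K_1(\gn)},\cV_{k,\ell}^*(\C))[[\gm_\phi]]$ in both infinity-type cases.

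The dimensions of the compactly supported cohomology follow by localizing~\eqref{e: exact sequence} at $\gm_\phi$ and feeding in Proposition~\ref{p: igualdadboundary}. The crucial formal input is the injectivity of the restriction map $r^i\colon H^i_{Eis}(Y_{\K_1(\gn)},\cV_{k,\ell}^*(\C))\hookrightarrow H^i(\partial \overline{Y}_{\K_1(\gn)},\cV_{k,\ell}^*(\C))$, which is immediate from the definition of the Eisenstein cohomology combined with the identity $H^i_{cusp}=\mathrm{Im}(H^i_c\to H^i)=\ker(r^i)$ provided by the exactness of~\eqref{e: exact sequence}. In the first infinity type, the localized segment $0\to H^1_c[[\gm_\phi]]\to 1\hookrightarrow 2\to H^2_c[[\gm_\phi]]\to 0$ gives $\dim H^1_c[[\gm_\phi]]=0$ and $\dim H^2_c[[\gm_\phi]]=1$; in the second, the segments $0\to 1\to H^1_c[[\gm_\phi]]\to 0$ and $0\to H^2_c[[\gm_\phi]]\to 1\xrightarrow{\sim} 1\to H^3_c[[\gm_\phi]]\to 0$ give $\dim H^1_c[[\gm_\phi]]=1$ and $\dim H^2_c[[\gm_\phi]]=0$. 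The main obstacle is the cuspidal vanishing $H^i_{cusp}[[\gm_\phi]]=0$; once that is established, everything else collapses to a formal dimension count, using the Hecke-equivariant injectivity of $r^i$ on Eisenstein cohomology.
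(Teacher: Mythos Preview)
Your proposal is correct and follows essentially the same route as the paper: the cuspidal/Eisenstein decomposition together with Corollary~\ref{c: newformseisenstein} handles $H^i[[\gm_\phi]]$, and the localized Borel--Serre sequence with Proposition~\ref{p: igualdadboundary} handles $H^i_c[[\gm_\phi]]$. You are simply more explicit than the paper about two points it leaves implicit---the cuspidal vanishing $H^i_{cusp}[[\gm_\phi]]=0$ asserted in~\eqref{e: betti} and the injectivity of the restriction $r^i$ on Eisenstein cohomology needed to pin down the dimensions in the exact sequence---and for part~(i) you spell out the duality argument where the paper simply cites \cite{lee2023p}.
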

\begin{proof}
For (i) see the proof of \cite[Lem 4.3.1]{lee2023p}. 

We now prove ii) when $(k,\ell)\neq(0,0)$. To ease notation we write $H^i$ instead of $H^i(Y_{\K_1(\gn)},\cV_{k,\ell}^*(\C))$, and the same for the type of cohomologies. The results for $H^i$ for $i=1,2$ in part ii) follows from 
\begin{equation}\label{e: betti}
H^i[[\gm_\phi]]=H_{cusp}^i[[\gm_\phi]]\oplus H_{\mathrm{Eis}}^i[[\gm_\phi]]=H_{\mathrm{Eis}}^i[[\gm_\phi]], 
\end{equation}
and Corollary \ref{c: newformseisenstein}.

For the results in compact support, we consider the following exact sequence
\begin{equation}\label{exact}
0\rightarrow H_\partial^0\rightarrow H_c^1\rightarrow H^1\rightarrow H_\partial^1\rightarrow H_c^2\rightarrow H^2\rightarrow H_\partial^2\rightarrow 0.   
\end{equation}
Now we take the generalized eigenspace corresponding to $\gm_\phi$ in (\ref{exact}), and after using the previous results of $H^j$ for $j=1,2$ and the results for $H_\partial^j$ for $j=0,1,2$ of Proposition \ref{p: igualdadboundary}, we conclude by dimensional reasons and the exactness of the sequence.

We now proceed with the result when $(k,\ell)=(0,0)$ in ii), i.e., the character $\phi$ has infinity type $[(1,0),(-1,0)]$. First, we note that the proof of \cite[Lem 4.3.1]{lee2023p} also give us that $H^3(Y_{\K_1(\gn)},\C)=H_c^0(Y_{\K_1(\gn)},\C)=\{0\}$. Now, consider the exact sequences 
\begin{align}\label{e: caso (0,0)}
   & 0\rightarrow H_c^0[[\gm_\phi]]\rightarrow H^0[[\gm_\phi]]\rightarrow H_\partial^0[[\gm_\phi]]\rightarrow \cdots \\
   & \nonumber\cdots\rightarrow H_\partial^2[[\gm_\phi]]\rightarrow H_c^3[[\gm_\phi]]\rightarrow H^3[[\gm_\phi]]\rightarrow \cdots,
\end{align}
and note that by Proposition \ref{p: igualdadboundary} we have $H_\partial^0[[\gm_\phi]]=H_\partial^2[[\gm_\phi]]=0$, and in particular, by the previous argument $H_c^0[[\gm_\phi]]=H^3[[\gm_\phi]]=0$, then we conclude that $H^0[[\gm_\phi]]=H_c^3[[\gm_\phi]]=0$.

The result for $H^j$ for $j=1,2$ follows by (\ref{e: betti}) and Corollary \ref{c: newformseisenstein}, and the result for $H_c^j$ for $j=1,2$ follows in the same way as before, by taking the generalized eigenspace in (\ref{exact}) and using the previous results about $H^j$ for $j=1,2$ and $H_\partial^j$ for $j=0,1,2$.
\end{proof}

\begin{remark}
\begin{enumerate}
    \item Note that by setting $\phi_1=\varphi^{-1}$ and $\phi_2=(\varphi^c)^{-1}|\cdot|_{\A_K}^{-1}$ in the above theorem, we recover the eigensystem determined by the base change Bianchi modular form associated to $\varphi$ (see Section \ref{ss: basechange}).    
    \item We can prove a strong multiplicity-one version of the theorem above by invoking the main result of  \cite{ramakrishnan1994refinement} when studying the cuspidal part of the cohomology. However, the lower bound on the density of $S$ should be $7/8$ instead of $1/2$, condition inherited from \cite{ramakrishnan1994refinement}.

\end{enumerate}
\end{remark}

\subsection{\texorpdfstring{$p$}{Lg}-stabilized eigensystems} \label{ss: $p$-stabilized eigensystems} 
Points corresponding to Bianchi modular forms in the Bianchi eigenvariety are attached through the so called $p$-\emph{stabilizations}. We precise them first for the base change Bianchi modular form $\cF$ attached to a Hecke character $\varphi$ from Section \ref{ss: basechange}, followed by Bianchi Eisenstein eigensystems. 

We suppose that the conductor of $\varphi$, $\fm$, is coprime to $p$, then $\cF$ has level coprime to $p$. Some of the $p$-stabilizations of $\cF$ can be linked to the $p$-stabilizations of the theta series associated to $\varphi$,
$f$, as follows.

The Hecke polynomial of $f$ at $p$ is given by $x^2-a_px+ \epsilon_{f}(p)p^{k+1}$. Since
$a_p=\varphi(\gp)+\varphi(\overline{\gp})$ and $\epsilon_{f}(p)=\chi_K(p)\varphi_{\mathbb{Z}}(p)=\varphi(p\mathcal{O}_K)/p^{k+1}=\varphi(\gp\overline{\gp})/p^{k+1}$,
the roots of this polynomial are $\alpha=\varphi(\overline{\gp})$ and $\beta=\varphi(\gp)$. 

Likewise, the Hecke polynomial of $\cF$ at $\gq|p$ is given by $x^2-a_{\gq}x+ \epsilon_{\cF}(\gq)N(\gq)^{k+1}$, and since
$a_\gq=a_p$ (see Section \ref{ss: basechange}), $N(\gq)=p$, $\epsilon_{\cF}(\gq)=\epsilon_{f}(p)$, then the Hecke polynomials of $\cF$ at $\gp$ and $\overline{\gp}$ and the Hecke polynomial of $f$ at $p$ are equal. Thus, we obtain four $p$-stabilizations attached to $\cF$ denoted by $\cF^{\alpha\alpha}, \cF^{\beta\beta}, \cF^{\alpha\beta}$ and  $\cF^{\beta\alpha}$ (see for example \cite{palacios}). The form $\cF^{\alpha\alpha}$ has $U_{\fq}$-eigenvalue given by $\alpha$ for each $\fq\mid p$ and similarly for $\cF^{\beta\beta}, \cF^{\alpha\beta}$ and  $\cF^{\beta\alpha}$.

Denote by $f^{\alpha}$ and $f^{\beta}$ the $p$-stabilizations of $f$ corresponding to $\alpha$ and $\beta$ respectively. Then they correspond to the $p$-stabilizations $\cF^{\alpha\alpha}$ and $\cF^{\beta\beta}$ of $\cF$  respectively. This fact, have important consequences on the geometry of the Bianchi eigenvariety around points corresponding to $\cF^{\alpha\alpha}$ and $\cF^{\beta\beta}$ (see Section \ref{ss: results eigenvariety}). 


In the case of Bianchi Eisenstein eigensystems we can perform an analogous process of the $p$-stabilization. In fact, let $\phi=(\phi_1,\phi_2)$ be a character as in Corollary \ref{c: newformseisenstein}, which gives us a Bianchi Eisenstein eigensystem of level $\K_1(\gn)$ and weight $(k,\ell)$, and suppose additionally that $p\nmid\gn$. For $\gq|p$ recall the Hecke polynomial (\ref{e: hecke polynomial}) and its roots $\alpha_\fq$ and $\beta_\fq$.

\begin{definition}\label{d:stabilized BEE}
A \emph{$p$-stabilized Bianchi Eisenstein eigensystem} of $\phi$ is a couple $\tilde{\phi}= (\phi, (x_{\fq})_{\fq\mid p}))$ where $x_{\fq}\in \{\alpha_{\fq}, \beta_{\fq}\}$ for each $\fq\mid p$. 
\end{definition} 
Recall the ideal  $\gm_{\tilde{\phi}}$ from (\ref{e: ideal p-estabilizado}) with $S$ the full set of primes coprime to $p\fn$.
\begin{corollary}\label{c: newformeisenstein p-estab}
Let $\phi$ be as in Corollary \ref{c: newformseisenstein} with $p\nmid\gn$ and infinity type $[(k+1,0),(-1,\ell)]$ (resp. $[(k+1,\ell+1), (-1,-1)]$ with $(k, \ell)\neq (0, 0)$). Then:
$$\mathrm{dim}_{\C} H_{Eis}^1(Y_{\K_1(\gn,p)},\cV_{k,\ell}^*(\C))[[\gm_{\tilde{\phi}}]]=1,\;(\mathrm{resp.}\;0) ,$$
$$\mathrm{dim}_{\C} H_{Eis}^2(Y_{\K_1(\gn,p)},\cV_{k,\ell}^*(\C))[[\gm_{\tilde{\phi}}]]=0,\;(\mathrm{resp.}\;1).$$
\end{corollary}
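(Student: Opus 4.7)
The plan is to mirror the proof of Corollary \ref{c: newformseisenstein} verbatim, substituting the multiplicity-one input Theorem \ref{t: dim1borde} (respectively Remark \ref{r: 0dim result}, Proposition \ref{p: igualdadboundary}) by its Iwahori-at-$p$ analogue Corollary \ref{c: dim1borde} (respectively Remark \ref{r: 0dim result p-estab}, Corollary \ref{c: igualdadboundary}). By Theorem \ref{Eichler-Shimura} applied at level $\K_1(\gn,p)$, the degree $1$ Eisenstein cohomology decomposes Hecke-equivariantly as
\[
H_{Eis}^1(Y_{\K_1(\gn,p)},\cV_{k,\ell}^*(\C)) \;\cong\; \bigoplus_{\chi\text{ of inf. type }[(k+1,0),(-1,\ell)]} V_{\chi_f}^{\K_1(\gn,p)},
\]
and, when $(k,\ell)\neq(0,0)$, the degree $2$ Eisenstein cohomology is isomorphic to $H^2(\partial\overline{Y}_{\K_1(\gn,p)},\cV_{k,\ell}^*(\C))$, which in turn decomposes as a sum over characters of infinity type $[(k+1,\ell+1),(-1,-1)]$. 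Since localization at $\gm_{\tilde{\phi}}$ commutes with direct sums, the task reduces to identifying the surviving summands.

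For degree $1$ with $\phi$ of infinity type $[(k+1,0),(-1,\ell)]$, Corollary \ref{c: dim1borde} shows that only the summand at $\chi=\phi$ survives, since the partner character $(\phi_2|\cdot|_{\A_K},\phi_1|\cdot|_{\A_K}^{-1})$ has infinity type $[(0,\ell+1),(k,-1)]$, which is not of the required form; this summand is then $1$-dimensional (explicitly, the component $V^{x_{\gp},x_{\overline{\gp}}}$ cut out by the choice of $U_{\gp}, U_{\overline{\gp}}$-eigenvalues encoded in $\tilde{\phi}$). For degree $1$ with $\phi$ of infinity type $[(k+1,\ell+1),(-1,-1)]$, every indexing $\chi$ has an infinity type incompatible with both $\phi$ and its partner, so Remark \ref{r: 0dim result p-estab} forces every summand to vanish.

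The degree $2$ case is symmetric. When $\phi$ has infinity type $[(k+1,\ell+1),(-1,-1)]$ with $(k,\ell)\neq(0,0)$, only $\chi=\phi$ contributes: its partner $(\phi_2|\cdot|_{\A_K},\phi_1|\cdot|_{\A_K}^{-1})$ has infinity type $[(0,0),(k,\ell)]$, which lies outside the index set precisely because $(k,\ell)\neq(0,0)$; Corollary \ref{c: dim1borde} again yields dimension $1$. When $\phi$ has infinity type $[(k+1,0),(-1,\ell)]$, the delicate point is that Theorem \ref{Eichler-Shimura} does not provide the isomorphism $H_{Eis}^2\cong H^2(\partial)$ when $(k,\ell)=(0,0)$; I would circumvent this by using instead that $H_{Eis}^2$ is by definition a Hecke-equivariant quotient of $H^2(\partial\overline{Y}_{\K_1(\gn,p)},\cV_{k,\ell}^*(\C))$, and then apply Corollary \ref{c: igualdadboundary} to conclude that the localization at $\gm_{\tilde{\phi}}$ already vanishes on the boundary. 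No substantive new obstacle appears beyond the routine verification that shifting $\phi$ by $|\cdot|_{\A_K}$ moves its infinity type outside the relevant index set in each case.
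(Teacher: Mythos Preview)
Your proposal is correct and follows essentially the same approach as the paper: the paper's proof simply states that one repeats the argument of Corollary \ref{c: newformseisenstein} with Corollary \ref{c: dim1borde} and Remark \ref{r: 0dim result p-estab} replacing Theorem \ref{t: dim1borde} and Remark \ref{r: 0dim result}(ii), which is precisely your plan. One small slip: in the degree $2$ case with $\phi$ of infinity type $[(k+1,\ell+1),(-1,-1)]$, the partner's infinity type $[(0,0),(k,\ell)]$ lies outside the index set because $k+1>0$, not ``precisely because $(k,\ell)\neq(0,0)$''; the hypothesis $(k,\ell)\neq(0,0)$ is needed only to invoke the isomorphism $H_{Eis}^2\cong H^2(\partial)$ from Theorem \ref{Eichler-Shimura}.
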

\begin{proof}
The proof is exactly the same as the proof of Corollary \ref{c: newformseisenstein} using Corollary \ref{c: dim1borde} and Remark \ref{r: 0dim result p-estab} instead Theorem \ref{t: dim1borde} and part ii) of Remark \ref{r: 0dim result}.
\end{proof}

This corollary shows that $\tilde{\phi}$ is in fact a Bianchi Eisenstein eigensystem of level $\K_1(\gn,p)$ and weight $(k,\ell)$.

\begin{remark} \label{r: valuaciones general}
Note that if $\phi$ has infinity type $[(k+1,0),(-1,\ell)]$ (resp. $[(k+1,\ell+1), (-1,-1)]$), then $v_p(\alpha_\gp)=v_p(\beta_{\overline{\gp}})=0, v_p(\alpha_{\overline{\gp}})=\ell+1$, $v_p(\beta_\gp)=k+1$ (resp.  $v_p(\alpha_\gp)=v_p(\alpha_{\overline{\gp}})=0$, $v_p(\beta_\gp)=k+1$,  $v_p(\beta_{\overline{\gp}})=\ell+1$). Since we will work with the ordinary $p$-stabilization, we are interested in $\tilde{\phi}= (\phi, \alpha_\gp, \beta_{\overline{\gp}})$ (resp. $\tilde{\phi}= (\phi, \alpha_\gp, \alpha_{\overline{\gp}})$).
\end{remark}


\begin{corollary}\label{c: dimension 1 de p-estabilizaciones} Let $\phi$ be as in Theorem \ref{t: main theorem}, fix $\tilde{\phi}$ a $p$-stabilization of $\phi$ and let $L$ be a finite extension of $\Q_p$ containing the values of $\tilde{\phi}$. We put $\fn= \fn_1\fn_2$ and denote by $\fm_{\tilde{\phi}}\subset \cH_{\fn, p}\otimes_{\Z}L$ the maximal ideal attached to $\tilde{\phi}$ as in equation (\ref{e: ideal p-estabilizado}). Then if $\phi$ has infinity type $[(k+1,0),(-1,\ell)]$ (resp. $[(k+1,\ell+1), (-1,-1)]$ and $(k,\ell)\neq (0, 0)$) then
$$\mathrm{dim}_{L}H^1(Y_{\K_1(\gn, p)},\cV_{k,\ell}^*(L))_{\fm_{\tilde{\phi}}}=\mathrm{dim}_{L}H_c^2(Y_{\K_1(\gn, p)},\cV_{k,\ell}^*(L))_{\fm_{\tilde{\phi}}}=1 \text{ (resp. $0$)},$$ 
$$\mathrm{dim}_{L}H^2(Y_{\K_1(\gn, p)},\cV_{k,\ell}^*(L))_{\fm_{\tilde{\phi}}}=\mathrm{dim}_{L}H_c^1(Y_{\K_1(\gn, p)},\cV_{k,\ell}^*(L))_{\fm_{\tilde{\phi}}}= 0 \text{ (resp. $1$)}.$$
\end{corollary}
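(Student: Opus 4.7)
The plan is to reduce the statement to the generalized eigenspace computations already carried out over $\C$ in Corollaries \ref{c: newformeisenstein p-estab} and \ref{c: igualdadboundary}, and then run the long exact sequence of the Borel--Serre boundary, exactly as in the proof of Theorem \ref{t: main theorem}.

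First I would handle the descent from $\C$ to $L$. Since $L$ contains the values of $\tilde{\phi}$, the ideal $\fm_{\tilde{\phi}}\subset\cH_{\fn,p}\otimes_\Z L$ has residue field $L$, so its extension $\fm_{\tilde{\phi}}\otimes_L\C$ is a maximal ideal of $\cH_{\fn,p}\otimes_\Z\C$. Because localization at a maximal ideal commutes with the flat base change $L\hookrightarrow\C$, for each $i$ and each $?\in\{\emptyset,c\}$ there is a natural isomorphism
$$H_?^i(Y_{\K_1(\gn,p)},\cV_{k,\ell}^*(L))_{\fm_{\tilde{\phi}}}\otimes_L\C \;\cong\; H_?^i(Y_{\K_1(\gn,p)},\cV_{k,\ell}^*(\C))_{\fm_{\tilde{\phi}}\otimes_L\C}.$$
These cohomology groups are finite dimensional over $\C$, so the localization at a maximal ideal coincides with the corresponding generalized eigenspace, and it suffices to compute $\dim_\C H_?^i(Y_{\K_1(\gn,p)},\cV_{k,\ell}^*(\C))[[\fm_{\tilde{\phi}}]]$.

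Next I would invoke the Hecke-equivariant splitting $H^i=H^i_{\mathrm{cusp}}\oplus H^i_{\mathrm{Eis}}$ and argue that $H^i_{\mathrm{cusp}}[[\fm_{\tilde{\phi}}]]=0$. Any nonzero contribution would produce a cuspidal automorphic representation of $\mathrm{GL}_2(\A_K)$ whose Hecke eigenvalues at all primes $\fq\nmid p\fn$ agree with those of the isobaric non-cuspidal induction $\mathrm{Ind}_{B(\A_K)}^{\mathrm{GL}_2(\A_K)}(\phi_1,\phi_2)$, which is ruled out by strong multiplicity one for isobaric automorphic representations of $\mathrm{GL}_2$ (Jacquet--Shalika). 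Thus $H^i[[\fm_{\tilde{\phi}}]]=H^i_{\mathrm{Eis}}[[\fm_{\tilde{\phi}}]]$, and Corollary \ref{c: newformeisenstein p-estab} directly yields the claimed values of $\dim H^1[[\fm_{\tilde{\phi}}]]$ and $\dim H^2[[\fm_{\tilde{\phi}}]]$.

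For the compactly supported groups I would take the $[[\fm_{\tilde{\phi}}]]$-part of the long exact sequence (\ref{e: exact sequence}) at level $\K_1(\gn,p)$, substituting the $H^i[[\fm_{\tilde{\phi}}]]$ just obtained, the boundary dimensions from Corollary \ref{c: igualdadboundary}, and the vanishing of $H^0$, $H^3$, $H^0_c$, $H^3_c$ on $[[\fm_{\tilde{\phi}}]]$ (following the same argument as in the proof of Theorem \ref{t: main theorem}, via \cite[Lem 4.3.1]{lee2023p} when $(k,\ell)\neq(0,0)$ and a short diagram chase when $(k,\ell)=(0,0)$). Exactness then forces the stated values of $\dim H^1_c[[\fm_{\tilde{\phi}}]]$ and $\dim H^2_c[[\fm_{\tilde{\phi}}]]$ in each of the two infinity-type cases. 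The main obstacle is the cuspidal vanishing $H^i_{\mathrm{cusp}}[[\fm_{\tilde{\phi}}]]=0$; everything else is bookkeeping with flat base change and the long exact sequence.
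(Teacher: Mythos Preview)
Your proposal is correct and follows essentially the same route as the paper: reduce to generalized eigenspaces over $\C$, kill the cuspidal contribution, apply Corollary \ref{c: newformeisenstein p-estab} for the Eisenstein part, and then run the Borel--Serre long exact sequence together with Corollary \ref{c: igualdadboundary} to pin down the compactly supported groups. The only cosmetic difference is in the descent step: rather than a direct ``flat base change $L\hookrightarrow\C$'' (which, for a $p$-adic field $L$, requires a non-canonical field embedding and some care that it be compatible with the $K$-algebra structures defining the local systems), the paper passes through a sufficiently large number field $F$ serving as a common rational structure for both the $L$- and $\C$-coefficient cohomology, which is the cleaner way to phrase the comparison.
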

\begin{proof} Firstly, remark that from \cite[Thm 2.5.9]{bellaiche2021eigenbook} in each statement of the corollary we can work the generalized eigenspaces instead of localizations. 

Fixing a big enough number field $F$, for each $i$ the space $H^i(Y_{\K_1(\gn, p)},\cV_{k,\ell}^*(F)[[\fm_{\tilde{\phi}}]]$ is a $F$-rational structure of $H^i(Y_{\K_1(\gn, p)},\cV_{k,\ell}^*(L)[[\fm_{\tilde{\phi}}]]$ and $H^i(Y_{\K_1(\gn, p)},\cV_{k,\ell}^*(\C)[[\fm_{\tilde{\phi}}]]$. The same is true for compact support coholomogy groups. Thus, it is enough to prove the statement of the corollary with $\C$ instead of $L$. Now, to prove this we follow the strategy of the proof of Theorem \ref{t: main theorem} using Corollary \ref{c: newformeisenstein p-estab} and equations 
(\ref{e: betti}), (\ref{exact}) and (\ref{e: caso (0,0)}).
\end{proof}

\begin{remark}\label{r: recoverBC p-estab}
Note that by setting $\phi_1=\varphi^{-1}$ and $\phi_2=(\varphi^c)^{-1}|\cdot|_{\A_K}^{-1}$ in the above corollary, we recover the system of eigenvalues determined by the $p$-stabilizations of the base change Bianchi modular form associated to $\varphi$ (see Section \ref{ss: $p$-stabilized eigensystems}).    
\end{remark}
 
In \cite{palacios2023} were introduced and studied \textit{partial modular symbols} to produce $p$-adic $L$-functions for the ordinary $p$-stabilization ${\cF}^{\alpha\alpha}$. In a forthcoming work, the second author describes these partial modular symbols in terms of cohomology and perform the same study carried out in the current section in order to deform these $p$-adic $L$-functions. Following \cite{palacios2023} and \cite{bellaiche2015p} such study would produce interesting arithmetic applications such as the construction of $4$-variables $p$-adic $L$-functions passing through the one attached to $\cF^{\alpha\alpha}$.

\section{Bianchi eigenvarieties}

Using the multiplicity-one results for Bianchi Eisenstein eigensystems proved in Section \ref{s: BEE} we study Bianchi eigenvarieties around non-cuspidal points attached to these eigensystems.

\subsection{Overconvergent modules} We consider Bianchi eigenvarieties constructed in \cite{hansen}. Also see \cite{BW21} for a presentation of Hansen's work in the Bianchi setting.  

We fix $\fn$ an ideal of $\cO_K$ coprime with $p$.

\begin{definition} The \emph{weight space} is the rigid analytic space $\mathcal{W}$ defined over $\Q_p$ whose $R$-points, for a $\Q_p$-affinoid algebra $R$, are
\begin{equation*}
\mathcal{W}(R) = \mathrm{Hom}_{cts}(\tensorspace^\times/E(\gn), R^\times).  
\end{equation*}
where $E(\gn):=\{\epsilon \in \roi_K^{\times}: \epsilon\equiv1\; \mathrm{mod}\;\gn\}$. 
\end{definition} 

As $p$ splits in $K$ then we have $\cO_{K, p}^{\times}\cong \Z_p^{\times}\times \Z_p^{\times}$ and using this we will denote by $(z, \overline{z})$  the elements of $\cO_{K, p}^{\times}$. If $L$ is a finite extension of $\Q_p$  then a weight $\lambda\in\mathcal{W}(L)$ is called an \textit{algebraic weight} if $\lambda(z, \overline{z})=z^k\overline{z}^{\ell}$ with $k, \ell\in\mathbb{Z}$.

If $\cU\subset \cW$ is an admissible $L$-affinoid, we denote by $\cO(\cU)$ the ring of its rigid analytic functions. If $\lambda\in \cU(L)$ the rigid localization is denoted by $\cO(\cU)_{\lambda}$ and if $M$ is a $\cO(\cU)$-module we put $M_{\lambda}= M\otimes_{\cO(\cU)}\cO(\cU)_{\lambda}$. When $\lambda$ is algebraic and attached to $(k, \ell)\in \Z^2$ then we simply write $M_{(k, \ell)}$.

If $R$ is any $\Q_p$-affinoid algebra we denote by $\cA(R)$ the space of locally analytic functions $h: \cO_{K, p}\rightarrow R$, this space is naturally a Fréchet $R$-module. We denote by $\cD(R)$ its continuous dual which is a compact Fréchet $R$-module.

We consider the monoid
\begin{equation*}
\Sigma_0(p) := \left\{\matrixx{a}{b}{c}{d} \in M_2(\Z_p)\cap\mathrm{GL}_2(\Q_p):  c\in p\Z_p, d\in\Z_p^\times\right\}. 
\end{equation*}

For an affinoid $\cU\subset \cW$ we denote by $\lambda_{\cU}:\tensorspace^\times\rightarrow \roi(\cU)^\times$ its attached character. We denote by $\cA_\cU$ the $\cO(\cU)$-module $\cA(\roi(\cU))$ endowed with a left action by $\Sigma_0(p)\times \Sigma_0(p)$ given by $$(\gamma\cdot h)(z, \overline{z}) = \lambda_\cU(a+cz, \overline{a}+\overline{c} \overline{z}) h\left(\frac{dz+b}{cz+a},\frac{\overline{d} \overline{z} +\overline{b}}{\overline{c}\overline{z}+ \overline{a}}\right),$$ 
for $\gamma= \left(\smallmatrixx{a}{b}{c}{d}, \smallmatrixx{\overline a}{\overline b}{\overline c}{\overline d}\right)\in \Sigma_0(p)\times \Sigma_0(p)$ and $h\in \cA(\roi(\cU))$. Then we denote by $\cD_{\cU}$ the $\cO(\cU)$-module $\cD(\cO(\cU))$ endowed with the right action by $\Sigma_0(p)\times \Sigma_0(p)$ obtained from its left action on $\cA(\roi(\cU))$. When $\cU= \{\lambda\}$ is a single weight $\lambda: \cO_{K, p}^{\times}\rightarrow L^{\times}$ for $L$ a finite extension of $\Q_p$, the $L$-vector spaces $\cA_{\{\lambda\}}$ and $\cD_{\{\lambda\}}$ are simply denoted by $\cA_{\lambda}$ and $\cD_{\lambda}$. Finally, if $\lambda$ is an algebraic weight attached to the integers $k, \ell\in \Z$ then we denote the spaces obtained by $\mathcal{A}_{k,\ell}$ and $\cD_{k,\ell}$.

Let $k, \ell$ be non-negative integers and $L$ be a finite extension of $\Q_p$, then we have an $\Sigma_0(p)\times \Sigma_0(p)$-equivariant injection 
$$V_{k,\ell}(L)\longrightarrow \mathcal{A}_{k,\ell}, \ \ P\left[\matrixxx{x}{y},\matrixxx{\overline{x}}{\overline{y}}\right]\longrightarrow h(z, \overline{z}):= P\left[\matrixxx{z}{1},\matrixxx{\overline{z}}{\overline{1}}\right].$$
From this we obtain a $\Sigma_0(p)\times \Sigma_0(p)$-equivariant morphism of $L$-vector spaces
 \begin{equation}\label{e:specialization espacios}
 \cD_{k,\ell}\rightarrow V_{k,\ell}^{\ast}(L).
 \end{equation}

\subsection{Eigenvarieties}\label{ss: eigenvarieties}

Remark that we can identify $E(\fn)$ with $Z(\mathrm{GL}_2(K))\cap \cK_1(\fn, p)$ where $Z(\mathrm{GL}_2(K))= K^\times$ is the center of $\mathrm{GL}_2(K)$, thus the invariance under $E(\fn)$ allows to obtain from the spaces $\cD_\cU, \cD_{\lambda}$ and $\cD_{k,\ell}$ local systems on the manifold $Y_{\cK_1(\fn, p)}$ which are denoted again by the same symbols. We will use the cohomology groups $H^i(Y_{\cK_1(\fn, p)},\cD_{k,\ell})$ and  $H_c^i(Y_{\cK_1(\fn, p)},\cD_{k,\ell})$ for $i\in \N$. These spaces are endowed with an action of the Hecke algebra $\cH_{\fn, p}$.

For each $i\in \N$ and $?\in\{\emptyset,c\}$, the map (\ref{e:specialization espacios}) induces a Hecke-equivariant \textit{specialization map} of $L$-vector spaces
\begin{equation}\label{e:specialization map}
\rho_{k,\ell}:H_?^i(Y_{\cK_1(\fn, p)},\cD_{k,\ell})\rightarrow H_?^i(Y_{\cK_1(\fn, p)},\cV_{k,\ell}^*(L)).  
\end{equation} 
We recall the following result which controls this morphism in small enough slope subspaces (see \cite{BW21}, \cite{BW21parabolic} and \cite{hansen}). If $M$ is a $L[U_{\fp}, U_{\overline{\fp}}]$-module and  $h= (h_{\fp}, h_{\overline\fp})\in \R^2$ we denote by $M^{\leqslant h}\subset M$ the $L$-vector space of the vectors of slope $\leqslant h_{\fp}$ with respect to $U_{\fp}$ and slope $\leqslant h_{\overline\fp}$ with respect to $U_{\overline\fp}$ (see \cite[Prop. 4.6.2]{ash2008p}).

\begin{theorem}\label{t:control} If $h= (h_{\fp}, h_{\overline{\fp}})\in [0, k+1)\times [0, \ell+1)$ then (\ref{e:specialization map}) induces a Hecke equivariant isomorphism of $L$-vector spaces
$$\rho_{k,\ell}:H_?^i(Y_{\cK_1(\fn, p)},\cD_{k,\ell})^{\leqslant h}\cong H_?^i(Y_{\cK_1(\fn, p)},\cV_{k,\ell}^*(L))^{\leqslant h}$$
for $i\in \N$ and $?\in\{\emptyset,c\}$.
\end{theorem}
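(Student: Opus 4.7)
The plan is to follow the well-known strategy for control theorems for overconvergent cohomology, adapted from Stevens and carried out in the Bianchi setting in \cite{BW21}, \cite{hansen}. Let $N_{k,\ell}$ denote the kernel of the map $\cD_{k,\ell}\rightarrow V_{k,\ell}^{\ast}(L)$ dual to the inclusion $V_{k,\ell}(L)\hookrightarrow \cA_{k,\ell}$. This fits into a short exact sequence of $\Sigma_0(p)\times\Sigma_0(p)$-modules
\begin{equation*}
0\longrightarrow N_{k,\ell}\longrightarrow \cD_{k,\ell}\xrightarrow{\ \rho_{k,\ell}\ }V_{k,\ell}^{\ast}(L)\longrightarrow 0.
\end{equation*}
Since the functor of taking the slope $\leqslant h$ part is exact on the relevant Fréchet modules admitting the Hecke operators $U_{\fp}, U_{\overline{\fp}}$ as compact operators, it suffices to show that $H_?^i(Y_{\cK_1(\fn,p)},N_{k,\ell})^{\leqslant h}=0$ for all $i$ and $?\in\{\emptyset,c\}$; the long exact sequence in cohomology will then force $\rho_{k,\ell}$ to induce an isomorphism on the small-slope parts.

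The heart of the proof is a slope estimate at the level of the coefficient module: $U_{\fp}$ acts on $N_{k,\ell}$ with slopes bounded below by $k+1$, and $U_{\overline{\fp}}$ acts on $N_{k,\ell}$ with slopes bounded below by $\ell+1$. To see this, one writes $U_{\fp}$ as the sum of the matrices $\smallmatrixx{1}{a}{0}{p}$ for $a$ running over representatives of $\Z_p/p\Z_p$ (acting on the first factor of $\Sigma_0(p)\times\Sigma_0(p)$, trivially on the second). Dualizing the action on $\cA_{k,\ell}$ and using the fact that $N_{k,\ell}$ consists of distributions annihilating polynomials of degree $\leq k$ in the first variable, one checks that this action on $N_{k,\ell}$ is divisible by $p^{k+1}$. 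The argument for $U_{\overline{\fp}}$ is completely symmetric, now using the second variable and the degree $\ell$ in $\overline{z}$.

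Once these coefficient-level slope bounds are in place, they propagate to $H_?^i(Y_{\cK_1(\fn,p)},N_{k,\ell})$ since the Hecke action on cohomology is induced from the coefficient action. Because $h_{\fp}<k+1$ and $h_{\overline{\fp}}<\ell+1$, any element on which $U_{\fp}$ has slope $\geq k+1$ and $U_{\overline{\fp}}$ has slope $\geq \ell+1$ must vanish in the $\leqslant h$ part; this yields the required vanishing and completes the argument.

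The main obstacle is the rigorous slope estimate on $N_{k,\ell}$ together with the technical framework (compact operators on compact Fréchet modules, the slope decomposition machinery of \cite[Prop. 4.6.2]{ash2008p}) needed to deduce the cohomological vanishing from it. All these ingredients, however, have been set up in detail in the Bianchi framework in \cite{BW21}, \cite{BW21parabolic} and in the more general eigenvariety machine of \cite{hansen}; the argument here is a direct application of the results stated there, and we would simply cite those works for the verification of the slope bound.
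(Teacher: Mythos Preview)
The paper does not actually give a proof of this theorem: it is stated as a result recalled from the literature, with the sentence ``We recall the following result which controls this morphism in small enough slope subspaces (see \cite{BW21}, \cite{BW21parabolic} and \cite{hansen})'' and no further argument. Your proposal likewise defers to exactly these references for the technical verification, so in this sense the two treatments agree.

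Your sketch of the underlying argument is the standard one, but one point is stated a bit loosely. The kernel $N_{k,\ell}$ consists of distributions vanishing on polynomials of bidegree $(\leq k,\leq\ell)$ in $(z,\overline{z})$, not on all functions polynomial of degree $\leq k$ in $z$ alone; so it is not literally true that $U_{\fp}$ acts on all of $N_{k,\ell}$ with slope $\geq k+1$. In the two-variable Bianchi setting one instead introduces a filtration (e.g.\ by the degree in one variable) and shows that on each graded piece at least one of $U_{\fp},U_{\overline{\fp}}$ has the required slope bound, which is enough to kill the $\leqslant h$ part under the hypothesis $h\in[0,k+1)\times[0,\ell+1)$. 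This is precisely what is carried out in \cite{BW21} and \cite{BW21parabolic}, so since you already cite those works for the details the imprecision is harmless.
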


Firstly, we consider the eigenvariety of tame level $\cK_1(\fn)$ constructed using the full cohomology and denoted by $\cE$. Moreover, we consider the eigenvariety constructed using the compact supported cohomology and denoted by $\cE_c$. Both are endowed with finite maps to the weight space:
$$\cE\longrightarrow \cW \ \ \ \ \ \  \text{and} \ \ \ \ \ \ \cE_c\longrightarrow \cW.$$

See \cite[\S3.2]{BW21} and \cite[\S4.3]{hansen} for details. For example we give an idea of the construction of the eigenvariety $\cE_c$. Let $\cU\subset \cW$ be an open affinoid and $h\in \R$ non-negative such that $H_c^{\bullet}(Y_{\cK_1(\fn, p)}, \cD_{\cU})$ admits a slope-$\leqslant h$ decomposition with respect to $U_p= U_{\fp}U_{\overline{\fp}}$ (such a pair $(\cU, h)$ is called a slope adapted pair). We denote by 
$$\bT_{\cU, h}\subset \mathrm{End}_{\cO(\cU)}(H_c^{\bullet}(Y_{\cK_1(\fn, p)}, \cD_{\cU})^{\leqslant h})$$ 
the $\cO(\cU)$-algebra generated by the action of the Hecke algebra $\cH_{\gn,p}$ on $H^{\bullet}_c(Y_{\cK_1(\fn, p)}, \cD_{\cU})^{\leqslant h}$. The eigenvariety $\cE_c$ is constructed by gluing together the rigid analytic spectrums $\mathrm{Sp}(\bT_{\cU, h})$. The natural maps $\mathrm{Sp}(\bT_{\cU, h})\rightarrow \cU$ also glue to a map $\cE_c\longrightarrow \cW$.

\subsection{Results} \label{ss: results eigenvariety}

Let $\phi= (\phi_1, \phi_2)$ be as in Section \ref{ss: $p$-stabilized eigensystems} i.e., $\phi_1,\phi_2$ are Hecke characters of $K$ of conductors $\fn_1,\fn_2$ respectively such that $(\fn_1, \fn_2)= 1$ and $\fn= \fn_1\fn_2$. Moreover, we assume that either:

\begin{itemize}
\item $\phi$ has infinity type $[(k+1,0),(-1,\ell)]$ or
\item $\phi$ has infinity type $[(k+1,\ell+1), (-1,-1)]$ and $(k,\ell)\neq (0, 0)$.
\end{itemize}

We fix a $p$-stabilization $\tilde{\phi}$ of $\phi$, as defined in Section \ref{ss: $p$-stabilized eigensystems}. It is natural to ask if $\tilde\phi$ determines a point in some of the eigenvarieties $\cE$ or $\cE_c$. In affirmative cases then we would like to study the local geometry of the eigenvariety around these points. The following theorem treats these questions when $\tilde{\phi}$ is ordinary i.e., $v_p(x_{\fp})= v_{p}(x_{\overline{\fp}})= 0$ (see Section \ref{ss: $p$-stabilized eigensystems}).

\begin{theorem} \label{t: main general theorem} If $\tilde\phi$ is the ordinary $p$-stabilization of $\phi$, then we have:
\begin{enumerate}
    \item $\tilde{\phi}$ determines a point $x\in \cE$ and the weight map $\cE\longrightarrow \cW$ is etale at $x$.
    \item $\tilde{\phi}$ determines a point $x_{c}\in \cE_c$ and the weight map $\cE_c\longrightarrow \cW$ is etale at $x_c$.
\end{enumerate}
\end{theorem}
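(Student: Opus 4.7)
The plan is to bootstrap the multiplicity-one result of Corollary \ref{c: dimension 1 de p-estabilizaciones} into geometric information on the eigenvarieties via the control theorem \ref{t:control} and the formalism of \cite{BDJ17}. The argument is the same for $\cE$ and $\cE_c$, so I describe it for $\cE_c$; the case of $\cE$ requires only the substitution of $H^{i}$ for $H^{i}_c$ throughout. Recall that Corollary \ref{c: dimension 1 de p-estabilizaciones} isolates one distinguished cohomological degree $i_0\in\{1,2\}$ (depending on the infinity type of $\phi$) in which the generalized eigenspace $H^{i_0}_c(Y_{\cK_1(\fn, p)}, \cV_{k,\ell}^{*}(L))_{\fm_{\tilde{\phi}}}$ is $1$-dimensional over $L$ and vanishes in the complementary degree. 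The mere non-vanishing produces a system of Hecke eigenvalues which, by Hansen's construction, determines a point $x_c\in \cE_c$ above $\lambda= (k, \ell)$, giving the first half of (2).

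For the etaleness at $x_c$, observe that since $\tilde\phi$ is ordinary we have $v_p(x_\fp)=v_p(x_{\overline\fp})=0$. I would pick $h=(h_\fp, h_{\overline\fp})\in \R^2_{\geqslant 0}$ satisfying $h_\fp<k+1$ and $h_{\overline\fp}<\ell+1$, together with a slope-adapted $L$-affinoid $\cU\subset \cW$ containing $\lambda$. Setting $M:= H^{i_0}_c(Y_{\cK_1(\fn, p)}, \cD_\cU)^{\leqslant h}_{\fm_{\tilde\phi}}$, this is a finitely generated $\cO(\cU)_\lambda$-module by Hansen's general theory. The control theorem \ref{t:control}, combined with the fact that the $U_\fp$- and $U_{\overline\fp}$-eigenvalues of $\tilde\phi$ have slope $0\leqslant h_\fp, h_{\overline\fp}$, identifies
\[
M/\fm_\lambda M \cong H^{i_0}_c(Y_{\cK_1(\fn, p)}, \cV_{k,\ell}^{*}(L))^{\leqslant h}_{\fm_{\tilde\phi}} = H^{i_0}_c(Y_{\cK_1(\fn, p)}, \cV_{k,\ell}^{*}(L))_{\fm_{\tilde\phi}},
\]
which is $1$-dimensional by Corollary \ref{c: dimension 1 de p-estabilizaciones}. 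By Nakayama, $M$ is cyclic over $\cO(\cU)_\lambda$; after shrinking $\cU$ if necessary to ensure torsion-freeness, $M$ becomes free of rank $1$.

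The local Hecke algebra $\bT_{\cU, h, \fm_{\tilde\phi}}$ then acts faithfully on $M$ -- here using that the complementary-degree overconvergent cohomology also vanishes at $\fm_{\tilde\phi}$, so that $\bT_{\cU, h, \fm_{\tilde\phi}}$ coincides with the local image of $\cH_{\fn, p}$ acting on $M$ -- and therefore embeds in $\mathrm{End}_{\cO(\cU)_\lambda}(M)= \cO(\cU)_\lambda$. Since the structural map $\cO(\cU)_\lambda\to \bT_{\cU, h, \fm_{\tilde\phi}}$ composed with this embedding is the identity of $\cO(\cU)_\lambda$, both arrows must be isomorphisms. This is precisely the assertion that the weight map $\cE_c \to \cW$ is etale at $x_c$.

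The main obstacle I anticipate is propagating the \emph{single-degree concentration} from the classical weight $\lambda$ to the full affinoid $\cU$: namely, rigorously verifying that $H^i_c(Y_{\cK_1(\fn, p)}, \cD_\cU)^{\leqslant h}_{\fm_{\tilde\phi}}= 0$ for $i\neq i_0$ after possibly shrinking $\cU$. This is where the faithfulness of $\bT_{\cU, h, \fm_{\tilde\phi}}$ on $M$ alone really rests, and it is handled by a standard base-change and long exact sequence argument of \cite{BDJ17}-type, starting from the vanishing at $\lambda$ provided by Corollary \ref{c: dimension 1 de p-estabilizaciones} but requiring some care in tracking compatibilities with slope decompositions and with the boundary--interior exact sequence over $\cU$.
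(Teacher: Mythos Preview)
Your proposal is correct and follows essentially the same route as the paper: control theorem plus Corollary \ref{c: dimension 1 de p-estabilizaciones} to get concentration and dimension one at the classical weight, then \cite{BDJ17} to propagate to the affinoid, then the observation that the local Hecke algebra is an $\cO(\cU)_{(k,\ell)}$-subalgebra of $\mathrm{End}_{\cO(\cU)_{(k,\ell)}}(M)\cong \cO(\cU)_{(k,\ell)}$ containing $1$. The only difference is organizational: the paper invokes \cite[Lemmas 2.9(i) and 2.11]{BDJ17} as a black box to obtain simultaneously the single-degree concentration of $H^{\bullet}_?(Y_{\cK_1(\fn,p)},\cD_\cU)^{\leqslant h}_{\fm_{\tilde\phi}}$ and its freeness of rank one, whereas you unpack this into a Nakayama step and then flag the base-change/concentration issue as the ``main obstacle'' at the end. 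Your identification $M/\fm_\lambda M\cong H^{i_0}_?(Y,\cV_{k,\ell}^*(L))_{\fm_{\tilde\phi}}$ is not quite immediate from the control theorem alone (one needs vanishing in the adjacent degree to kill the Tor term), so in a clean write-up you should apply the \cite{BDJ17} lemmas first, exactly as the paper does, rather than after the Nakayama argument.
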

\begin{proof} The proof of (1) and (2) are completely analogue, thus we prove the theorem for the cohomology without support. Moreover, we will suppose that $\phi$ has infinity type $[(k+1,0),(-1,\ell)]$. The case when $\phi$ has infinity type $[(k+1,\ell+1), (-1,-1)]$ with $(k,\ell)\neq (0, 0)$ is proved in the same way. 

Recall the maximal ideal $\fm_{\tilde{\phi}}\subset \cH_{\fn, p}\otimes_{\Z}L$ attached to $\tilde{\phi}$ as in (\ref{e: ideal p-estabilizado}). As $\tilde\phi$ is the ordinary $p$-stabilization of $\phi$ then after localizing the map (\ref{e:specialization map}) at $\fm_{\tilde{\phi}}$, we obtain from Theorem \ref{t:control}  an isomorphism of $L$-vector spaces
 $$\rho_{k,\ell}:H^i(Y_{\cK_1(\fn, p)},\cD_{k,\ell})_{\fm_{\tilde{\phi}}}\rightarrow H^i(Y_{\cK_1(\fn, p)},\cV_{k,\ell}^*(L))_{\fm_{\tilde{\phi}}}$$  
for each $i\in \N$. We deduce from Theorem \ref{t: main theorem} and Corollary \ref{c: dimension 1 de p-estabilizaciones} that $H^{\bullet}(Y_{\K_1(\gn, p)},\cD_{k,\ell})_{\fm_{\tilde{\phi}}}$ is concentrated in degree $\bullet= 1$ and $\mathrm{dim}_{L}H^1(Y_{\K_1(\gn, p)},\cD_{k,\ell})_{\fm_{\tilde{\phi}}}= 1$. From this, we obtain a point $x\in \cE$ attached to $\tilde\phi$.

We fix a slope adapted pair $(\cU, h)$ and we consider the localization $(\bT_{\cU,h})_{x}$ of $\bT_{\cU, h}$ at $x$. Applying \cite[Lemmas 2.9(i) and 2.11]{BDJ17} we deduce that $H^{\bullet}(Y_{\K_1(\gn, p)},\cD_{\cU})^{\leqslant h}_{\fm_{\tilde{\phi}}}$ is supported in degree $1$ and $H^{1}(Y_{\K_1(\gn, p)},\cD_{\cU})^{\leqslant h}_{\fm_{\tilde{\phi}}}$ is a $\cO(\cU)_{(k, \ell)}$-module free of rank $1$. Then we observe that 
$$(\bT_{\cU, h})_{x}\subset \mathrm{End}_{\cO(\cU)}(H^{\bullet}(Y_{\K_1(\gn, p)},\cD_{k,\ell})^{\leqslant h}_{\fm_{\tilde{\phi}}})\cong \cO(\cU)_{(k, \ell)}$$
As $(\bT_{\cU, h})_{x}$ is a $\cO(\cU)_{(k, \ell)}$-algebra containing $1$, we deduce that $(\bT_{\cU, h})_{x}\cong \cO(\cU)_{(k, \ell)}$ and thus the map $\cE\rightarrow \cW$ is etale at $x$. 
\end{proof}

\begin{remark} When $p$ is inert in $K$ and the conductors of $\phi_1$ and $\phi_2$ are coprime to $p$ then the $p$-stabilizations of $\phi= (\phi_1, \phi_2)$ have critical slope and then we can not use the corresponding control Theorem \ref{t:control}. Since that theorem is key in the proof of the previous theorem, this explains our assumption on the splitting behavior of $p$ when the conductors of $\phi$ are coprime to $p$.
\end{remark}

\begin{corollary}\label{c: rank 1 around our point} There exists a slope adapted pair $(\cU,h)$ such that $(k,\ell)\in \cU$ and if $\cC \subset \cE_c$ denotes the connected component though $x_c$ over $\cU$ we have 
\begin{enumerate}
\item if $\phi$ has infinity type $[(k+1,0),(-1,\ell)]$ then $H^2_c(Y_{\K_1(\gn, p)}, \cD_{\cU})^{\leqslant h}\otimes_{\bT_{\cU}}\cO(\cC)$ is a $\cO(\cU)$-module free of rank $1$;
\item if $\phi$ has infinity type $[(k+1,\ell+1), (-1,-1)]$ with $(k,\ell)\neq (0, 0)$ then the $\cO(\cU)$-module $H^1_c(Y_{\K_1(\gn, p)}, \cD_{\cU})^{\leqslant h}\otimes_{\bT_{\cU}}\cO(\cC)$ is free of rank $1$. 
\end{enumerate}
\end{corollary}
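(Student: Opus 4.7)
The plan is to mimic the proof of Theorem \ref{t: main general theorem} but now working with the analytic family $\cD_{\cU}$ and then extracting the piece that corresponds to the connected component $\cC$. I would treat case (1) in detail; case (2) is completely analogous with the roles of degrees $1$ and $2$ interchanged.

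First I would choose a slope adapted pair $(\cU, h)$ with $(k, \ell)\in \cU$ and $h= (h_\fp, h_{\overline{\fp}})\in [0, k+1)\times [0, \ell+1)$ such that $H^{\bullet}_c(Y_{\K_1(\fn, p)}, \cD_{\cU})$ admits a slope $\leqslant h$ decomposition; the existence of such pairs comes from \cite{hansen}, and the ordinarity of $\tilde\phi$ ensures its slope $(0,0)$ lies in the chosen range.

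Second, I would apply the control theorem \ref{t:control} to compactly supported cohomology to get
$$H^i_c(Y_{\K_1(\fn,p)}, \cD_{k,\ell})^{\leqslant h} \cong H^i_c(Y_{\K_1(\fn,p)}, \cV^{\ast}_{k,\ell}(L))^{\leqslant h}.$$
Localizing at $\fm_{\tilde\phi}$ and combining with Corollary \ref{c: dimension 1 de p-estabilizaciones} yields, in case (1), that $H^{\bullet}_c(Y_{\K_1(\fn,p)}, \cD_{k,\ell})^{\leqslant h}_{\fm_{\tilde\phi}}$ is concentrated in degree $2$ and is one-dimensional over $L$; analogously in degree $1$ for case (2).

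Third, I would invoke \cite[Lemmas 2.9(i) and 2.11]{BDJ17} applied to the family $H^{\bullet}_c(Y_{\K_1(\fn, p)}, \cD_{\cU})^{\leqslant h}_{\fm_{\tilde\phi}}$ to propagate the concentration and rank $1$ freeness from the classical fibre to the family: in case (1) this family is supported in degree $2$ and $H^2_c(Y_{\K_1(\fn,p)}, \cD_{\cU})^{\leqslant h}_{\fm_{\tilde\phi}}$ is free of rank $1$ over $\cO(\cU)_{(k,\ell)}$; analogously in degree $1$ for case (2).

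Finally, by Theorem \ref{t: main general theorem}(2) the weight map $\cE_c\to \cW$ is etale at $x_c$, so $(\bT_{\cU,h})_{x_c}\cong \cO(\cU)_{(k,\ell)}$. After shrinking $\cU$ around $(k,\ell)$ if necessary, the connected component $\cC$ of $\mathrm{Sp}(\bT_{\cU,h})$ through $x_c$ is a direct factor of $\mathrm{Sp}(\bT_{\cU,h})$ mapping isomorphically onto $\cU$, hence $\cO(\cC)\cong \cO(\cU)$, and tensoring with $\cO(\cC)$ extracts exactly the direct summand of the cohomology cut out by $\cC$. By the third step, this summand is free of rank $1$ over $\cO(\cC)\cong \cO(\cU)$, giving (1); case (2) is identical with degree $1$ replacing degree $2$. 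The main subtlety is the final step: after shrinking $\cU$ we must ensure that $\cC$ is the only connected component of $\mathrm{Sp}(\bT_{\cU,h})$ meeting $x_c$, so that tensoring with $\cO(\cC)$ and localizing at $\fm_{\tilde\phi}$ produce the same module over $\cO(\cU)$. The finiteness of $\bT_{\cU,h}$ over $\cO(\cU)$ together with the etaleness at $x_c$ make such a shrinking possible.
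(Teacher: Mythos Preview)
Your proposal is correct and follows essentially the same route as the paper. The only difference is in the final ``delocalization'' step: the paper simply cites \cite[Lemma 2.10]{BDJ17} to pass from the localized statement (free of rank $1$ over $\cO(\cU)_{(k,\ell)}$) to the global one on the connected component $\cC$ after shrinking $\cU$, whereas you spell this out by hand using the \'etaleness established in Theorem \ref{t: main general theorem}(2) and the finiteness of $\bT_{\cU,h}$ over $\cO(\cU)$. Your explicit argument is exactly the content of that lemma, so the two proofs coincide.
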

\begin{proof} We suppose that $\phi$ has infinity type $[(k+1,0),(-1,\ell)]$ as the proof is identical in the other case. As in the proof of Theorem \ref{t: main general theorem}, from Theorem \ref{t: main theorem} and Corollary \ref{c: dimension 1 de p-estabilizaciones} we deduce that there exists a slope adapted pair $(\cU, h)$ such that $H^{\bullet}_c(Y_{\K_1(\gn, p)},\cD_{\cU})^{\leqslant h}_{\fm_{\tilde{\phi}}}$ is supported in degree $2$ and $H^{2}_c(Y_{\K_1(\gn, p)},\cD_{\cU})^{\leqslant h}_{\fm_{\tilde{\phi}}}$ is a $\cO(\cU)_{(k, \ell)}$-module free of rank $1$. Finally, we apply \cite[Lemma 2.10]{BDJ17} to deduce that shrinking $\cU$ we can delocalize this, and thus proving the corollary. 
\end{proof}

\begin{remark} \begin{enumerate} \item These results are proved under the hypothesis that the conductor of $\phi_1$ is coprime to the conductor of $\phi_2$. Such condition comes from the Theorem \ref{t: dim1borde}. To relax these hypotheses, we should consider a slightly different open compact subgroups and a modification of the new vectors considered in the proof of Lemma \ref{l: eigen} (for more details in the modifications see \cite[\S 3.2]{berger2008denominators}). 
        \item Taking $\phi_1$ and $\phi_2$ as in Remark \ref{r: recoverBC p-estab}, then the theorem and corollary above proved the Corollary \ref{c: BC intro} for the base change points. Moreover, observe that in this case we allow $(k, \ell)= (0,0)$, thus in particular we consider eigenvalues attached to  base change elliptic curves from $\Q$ having complex multiplication by $K$.
\end{enumerate}
\end{remark}

Consider the base change Bianchi modular from $\mathcal{F}$ from Section \ref{ss: basechange} and its four $p$-stabilizations $\cF^{\alpha\alpha}$, $\cF^{\alpha\beta}$, $\cF^{\beta\alpha}$, $\cF^{\beta\beta}$ from Section \ref{ss: $p$-stabilized eigensystems}. The theorem above proved that around $\cF^{\alpha\alpha}$ the Bianchi eigenvariety is etale, then it is natural to ask what is happening around the other $p$-stabilizations. For example, if any of these $p$-stabilizations determine a point in the eigenvariety and in the affirmative case how it is the geometry around such a point. The $p$-stabilization $\cF^{\beta\beta}$ is special: by \cite{bellaiche2012critical} the Coleman-Mazur eigencurve is smooth around the modular form $f_{\beta}$ from Section \ref{ss: $p$-stabilized eigensystems}, then taking the base change of the Coleman family passing through $f_{\beta}$ (which is cuspidal and not CM) we obtain a curve $\cC_{BC}\subset \cE$ and thus $\cF^{\beta\beta}$ determines a point $x_\beta \in \cE$. We thank C. Williams for pointing out that $\cC_{BC}$ is not contained in any $2$-dimensional family (we apply \cite[Thm. 3.8]{BW21} to any non-critical point in $\cC_{BC}$). As observed by A. Betina, since the slope is locally constant in families we should be able to prove that there is no a $2$-dimensional (even a classical) non-cuspidal family passing through $x_{\beta}$. More precisely, we can prove: 

\begin{proposition} If $\cI$ is an irreducible component of $\cE$ such that $x_\beta \in \cI$ and contains infinitely many classical points around $x_\beta$, then all but finitely many of these points are cuspidal.
\end{proposition}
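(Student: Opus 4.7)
The proof is a continuity-vs-slope argument on the rigid analytic space $\cE$. At $x_\beta$, the Hecke eigenvalues satisfy $U_\gp(x_\beta) = U_{\overline\gp}(x_\beta) = \beta = \varphi(\gp)$, and this has $p$-adic valuation exactly $k+1$ since $\varphi$ has infinity type $(-k-1,0)$ and $\iota_p$ corresponds to $\gp$. As $U_\gp$ and $U_{\overline\gp}$ are rigid analytic functions on $\cE$, their absolute values $|U_\gp|$ and $|U_{\overline\gp}|$ are continuous on the irreducible component $\cI$. Since both take the strictly positive value $p^{-(k+1)}$ at $x_\beta$, I would fix an affinoid neighborhood $V$ of $x_\beta$ in $\cI$ such that
\begin{equation*}
p^{-(k+2)} \;<\; |U_\gp(x)|,\,|U_{\overline\gp}(x)| \;<\; 1 \qquad \text{for all } x\in V.
\end{equation*}

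Next I would inspect the classical non-cuspidal points inside $V$. Any such point $x_n$ is attached to a $p$-stabilized Bianchi Eisenstein eigensystem of some weight $(k_n,\ell_n)\in\N^2$, and by Theorem \ref{Eichler-Shimura} the underlying character has infinity type $[(k_n+1,0),(-1,\ell_n)]$ or $[(k_n+1,\ell_n+1),(-1,-1)]$. In either case, Remark \ref{r: valuaciones general} forces $v_p(U_\gp(x_n))\in\{0,k_n+1\}$ and $v_p(U_{\overline\gp}(x_n))\in\{0,\ell_n+1\}$. The upper bound $|U_\gp(x_n)|<1$ rules out valuation $0$ at $\gp$, so $v_p(U_\gp(x_n))=k_n+1$; combining with the lower bound $|U_\gp(x_n)|>p^{-(k+2)}$ gives $k_n+1<k+2$, i.e.\ $k_n\leqslant k$. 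The same reasoning at $\overline\gp$ yields $\ell_n\leqslant k$.

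Consequently, the weights of classical non-cuspidal points in $V$ lie in the finite box $\{0,\dots,k\}^2$, and at each such weight there are only finitely many admissible eigensystems: the pairs $(\phi_1,\phi_2)$ of Hecke characters of $K$ with given infinity type and conductors dividing $\fn$ range over a finite set (the relevant ray class groups are finite), and there are only four $p$-stabilizations. Thus $V$ contains only finitely many classical non-cuspidal points, so under the hypothesis that $\cI$ contains infinitely many classical points around $x_\beta$, all but finitely many must be cuspidal. The main subtlety is the rigid-analytic construction of $V$: one must check that the simultaneous open conditions $\{p^{-(k+2)} < |U_\gq| < 1\}_{\gq\mid p}$ define an admissible open neighborhood of $x_\beta$ inside $\cI$, but this is routine from the continuity of Hecke eigenvalues as rigid analytic coordinates on $\cE$.
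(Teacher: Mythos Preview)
Your argument is correct and follows essentially the same approach as the paper's sketch: both exploit that the $U_\gq$-slopes of non-cuspidal classical points are either $0$ or tied to the weight via Remark~\ref{r: valuaciones general}, and that this is incompatible with having infinitely many such points accumulating at $x_\beta$. The only notable difference is that the paper invokes local constancy of the slope along $p$-adic families to get an immediate contradiction, whereas you use the weaker fact that $|U_\gp|,|U_{\overline\gp}|$ are continuous on $\cE$ to trap the slopes in an interval, then bound the weights and count. Your version is more explicit (you spell out the finiteness of pairs $(\phi_1,\phi_2)$ of bounded conductor and fixed infinity type, and the finitely many $p$-stabilizations), and it avoids having to justify local constancy of the individual $U_\gq$-slopes rather than just the $U_p$-slope. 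One small point worth making explicit in your write-up: the non-cuspidal classical points on the eigenvariety of tame level $\K_1(\fn)$ automatically arise from characters unramified at $p$ (since Iwahori-fixed vectors in $V_{\chi_\fq}$ force $\chi_{1,\fq},\chi_{2,\fq}$ unramified), so they really are $p$-stabilizations and Remark~\ref{r: valuaciones general} applies.
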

\begin{proof}(Sketch) Suppose infinitely many of these points are non-cuspidal points, then such points should be non-ordinary $p$-stabilizations of classical eigensystems. But those $p$-stabilizations have slope given by their weights (see remark \ref{r: valuaciones general}). We obtain a contradiction because the slope is locally constant over $p$-adic families.
\end{proof}

\begin{question}
Are there classical families other than $\cC_{BC}$ in $\cE$ and passing through $x_{\beta}$?    
\end{question} 

It is interesting to compare this question with the results from \cite{BDS23} by Betina, Dimitrov and Shih in the real quadratic case. Moreover, the authors of this work would be very interested in study what happens with $\cF^{\alpha\beta}$ and $\cF^{\beta\alpha}$. Finally, we observe that in \cite{eisensteindegeneration} the authors study the geometry of the Coleman-Mazur eigencurve around critical slope Eisenstein series based on Bellaïche's work. Their results suggest that $\cF^{\beta\beta}$ would appear in degrees $1$ and $2$ of the overconvergent cohomology. 

\subsection{\texorpdfstring{$p$}{Lg}-adic \texorpdfstring{$L$}{Lg}-functions of Bianchi Eisenstein eigensystems}\label{sss: $p$-adic $L$-functions} Our previous results allow the construction of $p$-adic $L$-functions attached to certain Bianchi Eisenstein eigensystems. Let $\tilde\phi$ be the ordinary $p$-stabilization of $\phi= (\phi_1, \phi_2)$ as in Section \ref{ss: results eigenvariety} and suppose that its infinity type is $[(k+1,\ell+1), (-1,-1)]$ with $(k,\ell)\neq (0, 0)$.

From Theorem \ref{t:control} and Corollary \ref{c: dimension 1 de p-estabilizaciones} we deduce that for big enough finite extension $L$ of $\Q_p$, the $L$-vector space
$H^1_c(Y_{\cK_1(\fn, p)},\cD_{k,\ell})_{\fm_{\tilde\phi}}$ is one dimensional. We pick any non-zero class $\Psi_{\tilde\phi}\in H^1_c(Y_{\cK_1(\fn, p)},\cD_{k,\ell})_{\fm_{\tilde\phi}}$. 

From \cite[\S2.4]{BW21} we have the Mellin transform
$$\mathrm{Mel}_{(k, \ell)}: H^1_c(Y_{\cK_1(\fn, p)},\cD_{k,\ell})\rightarrow \cD(\mathrm{Cl}_K(p^{\infty}), L),$$
where $\mathrm{Cl}_K(p^{\infty})=K^\times\backslash \mathbb{A}_K^\times/\mathbb{C}^\times\prod_{\gq\nmid p}\cO_\gq^\times$.

\begin{definition}
The \emph{$p$-adic $L$-function of $\tilde\phi$} is the locally analytic distribution
 $$L_p(\tilde{\phi}):= \mathrm{Mel}_{(k, \ell)}(\Psi_{\tilde\phi})\in \cD(\mathrm{Cl}_K(p^{\infty}), L).$$
\end{definition}

\begin{proposition} \label{p: deformation of L-function} The distribution $L_p(\tilde{\phi})$ is bounded i.e., it is a measure over the $p$-adic group $\mathrm{Cl}_K(p^{\infty})$. Moreover,   $L_p(\tilde{\phi})$ can be deformed in a $2$-dimensional family of measures over $\mathrm{Cl}_K(p^{\infty})$. 
\end{proposition}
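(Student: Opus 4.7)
The plan is to deduce both claims from the rank-one freeness in Corollary \ref{c: rank 1 around our point}(2) and the étaleness in Theorem \ref{t: main general theorem}(2), combined with the family Mellin transform developed in \cite[\S2.4]{BW21} (following \cite{chris2017}). In that formalism, $\mathrm{Mel}_{(k,\ell)}$ is the specialization at the weight $(k,\ell)$ of a map
$$\mathrm{Mel}_{\cU}: H^1_c(Y_{\cK_1(\fn, p)}, \cD_{\cU}) \longrightarrow \cD(\mathrm{Cl}_K(p^{\infty}), \cO(\cU))$$
defined on any open affinoid $\cU \subset \cW$, so the task of deforming $L_p(\tilde\phi)$ reduces to deforming the class $\Psi_{\tilde\phi}$ in overconvergent compactly-supported cohomology.

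For the two-parameter deformation, I would pick a slope-adapted pair $(\cU, h)$ with $(k, \ell)\in \cU$ as provided by Corollary \ref{c: rank 1 around our point}(2), so that the $\cO(\cU)$-module
$$M_{\cU} := H^1_c(Y_{\cK_1(\fn, p)}, \cD_{\cU})^{\leqslant h} \otimes_{\bT_{\cU}} \cO(\cC)$$
is free of rank one, where $\cC \subset \cE_c$ is the component through $x_c$. By Theorem \ref{t: main general theorem}(2), after shrinking $\cU$ the natural map $\cO(\cU)\to \cO(\cC)$ is an isomorphism, so I can pick a generator $\Psi_{\cU}\in M_{\cU}$ whose specialization at $(k, \ell)$ is $\Psi_{\tilde\phi}$ up to a unit. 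Setting $\cL_p := \mathrm{Mel}_{\cU}(\Psi_{\cU}) \in \cD(\mathrm{Cl}_K(p^{\infty}), \cO(\cU))$ gives an $\cO(\cU)$-valued distribution over the $2$-dimensional base $\cU$, interpolating $L_p(\tilde\phi)$ at the central weight by compatibility of $\mathrm{Mel}_{\cU}$ with weight specialization.

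For the measure property, the ordinariness hypothesis $v_p(x_{\fp}) = v_p(x_{\overline{\fp}}) = 0$ forces $\Psi_{\tilde\phi}$, and hence $\Psi_{\cU}$ after a further shrinking of $\cU$, to be genuinely ordinary with respect to both $U_{\fp}$ and $U_{\overline{\fp}}$ (ordinarity is an open condition because the slope is locally constant over $p$-adic families). Under the family Mellin transform, ordinary families map to bounded $\cO(\cU)$-valued distributions, i.e., to measures on $\mathrm{Cl}_K(p^{\infty})$; specialization at $(k, \ell)$ then shows that $L_p(\tilde\phi)$ is itself a measure, and $\cL_p$ yields the desired $2$-parameter family of measures. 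The main obstacle I anticipate is the bookkeeping behind the family Mellin transform: one must check that $\mathrm{Mel}_{\cU}$ is $\cO(\cU)$-linear, commutes with weight specialization (so that $\cL_p$ really interpolates $L_p(\tilde\phi)$), and sends the ordinary part to bounded distributions with respect to the Gauss norm on $\cO(\cU)$. These are technical but structural points in the formalism of \cite{chris2017} and \cite[\S2.4]{BW21}; once they are verified the proposition follows from Corollary \ref{c: rank 1 around our point}(2) essentially by a direct application of the family construction.
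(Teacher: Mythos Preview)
Your proposal is correct and follows essentially the same route as the paper: use Corollary~\ref{c: rank 1 around our point}(2) to get rank-one freeness of $H^1_c(Y_{\cK_1(\fn, p)}, \cD_{\cU})^{\leqslant h} \otimes_{\bT_{\cU}} \cO(\cC)$, lift $\Psi_{\tilde\phi}$ to a class $\Psi_{\cU}$, and apply the family Mellin transform $\mathrm{Mel}_{\cU}$ from \cite[\S2.4]{BW21}. The only notable difference is that the paper establishes boundedness of $L_p(\tilde\phi)$ directly at the single weight $(k,\ell)$ by invoking the $(0,0)$-admissibility criterion of \cite[Prop.~6.15]{chris2017}, rather than deducing it from ordinariness in family as you do; both arguments are valid, though the paper's is more direct and avoids the need to track boundedness through the family Mellin transform.
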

\begin{proof} By exactly the same proof given in \cite[Prop. 6.15]{chris2017}  we obtain that the distribution $L_p(\tilde{\phi})$ is $(0,0)$-admissible. This is equivalent to say that $L_p(\tilde{\phi})$ is bounded i.e. it is a measure.

Now let $(\cU,h)$ be a slope adapted pair as in Corollary \ref{c: rank 1 around our point}. Then if $\cC\subset \mathrm{Sp}(\bT_{\cU, h})$ is the connected component passing through the point corresponding to $\tilde{\phi}$ then $H^1_c(Y_{\K_1(\gn, p)}, \cD_{\cU})^{\leqslant h}\otimes_{\bT_{\cU}}\cO(\cC)$ is a free $\cO(\cU)$-module of rank $1$. We pick a class 
$$\Psi_{\cU}\in H^1_c(Y_{\K_1(\gn, p)}, \cD_{\cU})^{\leqslant h}\otimes_{\bT_{\cU}}\cO(\cC)\subset H^1_c(Y_{\K_1(\gn, p)}, \cD_{\cU})^{\leqslant h}$$
lifting $\Psi_{\tilde\phi}$. 

Recall from \cite[\S2.4]{BW21} we have the Mellin transform
$$\mathrm{Mel}_{\cU}: H^1_c(Y_{\cK_1(\fn, p)},\cD_{\cU})\rightarrow \cD(\mathrm{Cl}_K(p^{\infty}), \cO(\cU)).$$ 
Thus, the distribution $L_p(\cC):= \mathrm{Mel}_{\cU}(\Psi_{\cU})\in \cD(\mathrm{Cl}_K(p^{\infty}), \cO(\cU))$ is a $2$-dimensional family of distributions over $\mathrm{Cl}_K(p^{\infty})$ specializing to $L_p(\tilde\phi)$ at $(k, \ell)$. 
\end{proof}

\begin{remark} We expect that $L_p(\tilde{\phi})$ satisfies an interpolation formula as a \emph{true} $p$-adic $L$-function. In fact, we would expect it is related to special values of Hecke $L$-functions. 
\end{remark}

\begin{remark}
Recall that our assumption on the splitting behavior of $p$ is motivated by the need of having  ordinary $p$-stabilizations and thus to be able to apply the control Theorem \ref{t:control}. However, when $p$ is inert and if we suppose the character $\phi=(\phi_1,\phi_2)$ has infinity type $[(k+1,\ell+1), (-1,-1)]$ and the conductors of $\phi_1,\phi_2$ satisfy $p||\gn_1$ and $p\nmid\gn_2$ then $\phi$ is attached to an ordinary Eisenstein eigensystem. If we also suppose $(k,\ell)\neq(0,0)$ then following the argument in Theorem \ref{t: main general theorem} we would obtain etaleness of the corresponding eigenvarieties. In the same way as in this section we can attach a $p$-adic $L$-function to $\phi$ and deform it in a $2$-dimensional family of measures over $\mathrm{Cl}_K(p^\infty)$ (exactly in the same way as in Proposition \ref{p: deformation of L-function}). It would very interesting to obtain an interpolation formula relating these to complex $L$-values and study the existence of trivial zeros. 
\end{remark}

\bibliographystyle{amsalpha}
\bibliography{main}

\newcommand{\etalchar}[1]{$^{#1}$}
\providecommand{\bysame}{\leavevmode\hbox to3em{\hrulefill}\thinspace}
\providecommand{\MR}{\relax\ifhmode\unskip\space\fi MR }
\providecommand{\MRhref}[2]{%
  \href{http://www.ams.org/mathscinet-getitem?mr=#1}{#2}
}
\providecommand{\href}[2]{#2}
\begin{thebibliography}{BDR{\etalchar{+}}22}

\bibitem[AL13]{atwill2013newform}
Timothy~W Atwill and Benjamin Linowitz, \emph{Newform theory for {H}ilbert {Eisenstein} series}, The Ramanujan Journal \textbf{30} (2013), no.~2, 257--278.

\bibitem[AS08]{ash2008p}
Avner Ash and Glenn Stevens, \emph{$p$-adic deformations of arithmetic cohomology}, preprint available at https://www2. bc. edu/ashav (2008).

\bibitem[BCGP21]{boxer2021abelian}
George Boxer, Frank Calegari, Toby Gee, and Vincent Pilloni, \emph{Abelian surfaces over totally real fields are potentially modular}, Publications math{\'e}matiques de l'IH{\'E}S \textbf{134} (2021), no.~1, 153--501.

\bibitem[BD15]{bellaiche2015p}
Jo{\"e}l Bella{\"\i}che and Samit Dasgupta, \emph{The $p$-adic {$L$}-functions of evil {E}isenstein series}, Compositio Mathematica \textbf{151} (2015), no.~6, 999--1040.

\bibitem[BDJ22]{BDJ17}
Daniel Barrera, Mladen Dimitrov, and Andrei Jorza, \emph{{$p$}-adic {$L$}-functions of {H}ilbert cusp forms and the trivial zero conjecture}, J. Eur. Math. Soc. (JEMS) \textbf{24} (2022), no.~10, 3439--3503. \MR{4432904}

\bibitem[BDP22]{betina2022failure}
Adel Betina, Mladen Dimitrov, and Alice Pozzi, \emph{On the failure of {G}orensteinness at weight 1 {E}isenstein points of the eigencurve}, American Journal of Mathematics \textbf{144} (2022), no.~1, 227--265.

\bibitem[BDR{\etalchar{+}}22]{bertolini2022heegner}
Massimo Bertolini, Henri Darmon, Victor Rotger, Marco~Adamo Seveso, and Rodolfo Venerucci, \emph{Heegner points, {S}tark-{H}eegner points, and diagonal classes}, Soci{\'e}t{\'e} math{\'e}matique de France, 2022.

\bibitem[BDS]{BDS23}
Adel Betina, Mladen Dimitrov, and Sheng-Chi Shih, \emph{Eisenstein points on the {H}ilbert cuspidal eigenvariety}, Preprint: \url{https://arxiv.org/abs/2311.08361}.

\bibitem[Bel12]{bellaiche2012critical}
Jo{\"e}l Bella{\"\i}che, \emph{Critical $p$-adic {$L$}-functions}, Inventiones mathematicae \textbf{189} (2012), no.~1, 1--60.

\bibitem[Bel21]{bellaiche2021eigenbook}
\bysame, \emph{The eigenbook: eigenvarieties, families of {G}alois representations, $p$-adic {$L$}-functions}, Springer Nature, 2021.

\bibitem[Ber05]{berger}
Tobias Berger, \emph{An {E}isenstein ideal for imaginary quadratic fields}, Ph.D. thesis, University of Michigan, 2005.

\bibitem[Ber08]{berger2008denominators}
\bysame, \emph{Denominators of {E}isenstein cohomology classes for {$\mathrm{GL}_2$} over imaginary quadratic fields}, Manuscripta Mathematica \textbf{125} (2008), 427--470.

\bibitem[BH24]{bergdall2024}
John Bergdall and David Hansen, \emph{On $p$-adic {$L$}-functions for {H}ilbert modular forms}.

\bibitem[BSW21a]{BW21}
Daniel Barrera~Salazar and Chris Williams, \emph{Families of {B}ianchi modular symbols: critical base-change $p$-adic {$L$}-functions and $p$-adic {A}rtin formalism}, Selecta Mathematica \textbf{27} (2021), no.~5, 82.

\bibitem[BSW21b]{BW21parabolic}
\bysame, \emph{Parabolic eigenvarieties via overconvergent cohomology}, Mathematische Zeitschrift \textbf{299} (2021), no.~1, 961--995.

\bibitem[Cas73]{casselman1973some}
William Casselman, \emph{On some results of {A}tkin and {L}ehner}, Mathematische Annalen \textbf{201} (1973), no.~4, 301--314.

\bibitem[Fri83]{friedberg1983imaginary}
Solomon Friedberg, \emph{On the imaginary quadratic {D}oi-{N}aganuma lifting of modular forms of arbitrary level}, Nagoya Mathematical Journal \textbf{92} (1983), 1--20.

\bibitem[GS93]{GS93}
Ralph Greenberg and Glenn Stevens, \emph{$p$-adic {$L$}-functions and $p$-adic periods of modular forms}, Invent. Math. \textbf{111} (1993), 407 -- 447.

\bibitem[Har84]{harder1984eisenstein}
G{\"u}nter Harder, \emph{Eisenstein cohomology of arithmetic groups, the case {${\mathrm{GL}}_2$}}.

\bibitem[Hid88]{hida1988p}
Haruzo Hida, \emph{On $p$-adic {H}ecke {A}lgebras for $\mathrm{GL}(2)$ over {T}otally {R}eal {F}ields}, Annals of mathematics (1988), 295--384.

\bibitem[Hid94]{hida1994critical}
\bysame, \emph{On the critical values of {$ L $}-functions of {$\mathrm{GL}(2)$} and {$\mathrm{GL}(2)\times\mathrm{GL}(2)$}}, Duke Mathematical Journal \textbf{74} (1994), no.~2, 431--529.

\bibitem[HN17]{hansen}
David Hansen and James Newton, \emph{Universal eigenvarieties, trianguline {G}alois representations, and $p$-adic {L}anglands functoriality}, Journal f{\"u}r die reine und angewandte Mathematik \textbf{2017} (2017), no.~730, 1--64.

\bibitem[Kis03]{kisin2003overconvergent}
Mark Kisin, \emph{Overconvergent modular forms and the {F}ontaine-{M}azur conjecture}, Inventiones mathematicae \textbf{153} (2003), no.~2, 373--454.

\bibitem[Lan80]{langlands1980base}
Robert~P Langlands, \emph{Base change for $\mathrm{GL}(2)$}, no.~96, Princeton University Press, 1980.

\bibitem[Lan94]{lang1994algebraic}
S~Lang, \emph{Algebraic number theory}, Springer-Verlag, 1994.

\bibitem[LRS24]{eisensteindegeneration}
David Loeffler and Oscar Rivero~Salgado, \emph{Eisenstein degeneration of {E}uler systems}, Journal f{\"u}r die reine und angewandte Mathematik (Crelles Journal) \textbf{2024} (2024), no.~814, 241--282.

\bibitem[LT15]{linowitz2015sign}
Benjamin Linowitz and Lola Thompson, \emph{The sign changes of fourier coefficients of {E}isenstein series}, The Ramanujan Journal \textbf{37} (2015), 223--241.

\bibitem[LW23]{lee2023p}
Pak-Hin Lee and Ju-Feng Wu, \emph{On $p$-adic adjoint {$L$}-functions for {B}ianchi cuspforms: the $p$-split case}, arXiv preprint arXiv:2306.15441 (2023).

\bibitem[LZ20]{loeffler2020bloch}
David Loeffler and Sarah~Livia Zerbes, \emph{On the {B}loch-{K}ato conjecture for $\mathrm{GSp}(4)$}, arXiv preprint arXiv:2003.05960 (2020).

\bibitem[LZ21]{loeffler2021birch}
\bysame, \emph{On the {B}irch-{S}winnerton-{D}yer conjecture for modular abelian surfaces}, arXiv preprint arXiv:2110.13102 (2021).

\bibitem[Pal23a]{palacios}
Luis~Santiago Palacios, \emph{Functional equation of the $p$-adic {$L$}-function of {B}ianchi modular forms}, Journal of Number Theory \textbf{242} (2023), 725--753.

\bibitem[Pal23b]{palacios2023}
\bysame, \emph{Non-cuspidal {B}ianchi modular forms and {K}atz $ p $-adic {$L$}-functions}, arXiv preprint arXiv:2302.13758 (2023).

\bibitem[Raj00]{rajan2000refinement}
C~Rajan, \emph{Refinement of strong multiplicity one for automorphic representations of $\mathrm{GL}(n)$}, Proceedings of the American Mathematical Society \textbf{128} (2000), no.~3, 691--700.

\bibitem[Ram94]{ramakrishnan1994refinement}
Dinakar Ramakrishnan, \emph{A refinement of the strong multiplicity one theorem for $\mathrm{GL}(2)$}, Inventiones mathematicae \textbf{116} (1994), no.~1, 645--649.

\bibitem[Raw24]{rawson2024computing}
James Rawson, \emph{Computing tangent spaces to eigenvarieties}, arXiv preprint arXiv:2402.13799 (2024).

\bibitem[Ser22]{serban2022finiteness}
Vlad Serban, \emph{A finiteness result for $p$-adic families of {B}ianchi modular forms}, Journal of Number Theory \textbf{233} (2022), 405--431.

\bibitem[SU14]{skinner2014iwasawa}
Christopher Skinner and Eric Urban, \emph{The {I}wasawa main conjectures for $\mathrm{GL}_2$}, Inventiones mathematicae \textbf{195} (2014), 1--277.

\bibitem[Tay88]{taylor1988congruences}
Richard~Lawrence Taylor, \emph{On congruences between modular forms}, Ph.D. thesis, Princeton University, 1988.

\bibitem[Urb95]{urban1995formes}
Eric Urban, \emph{Formes automorphes cuspidales pour $\mathrm{GL}_2$ sur un corps quadratique imaginaire. {V}aleurs sp{\'e}ciales de fonctions {$L$} et congruences}, Compositio Mathematica \textbf{99} (1995), no.~3, 283--324.

\bibitem[Wei77]{weisinger1977some}
James~Roger Weisinger, \emph{Some results on classical {E}isenstein series and modular forms over function fields}, Ph.D. thesis, Harvard University, 1977.

\bibitem[Wil86]{wiles1986p}
Andrew Wiles, \emph{On $p$-adic representations for totally real fields}, Annals of Mathematics \textbf{123} (1986), no.~3, 407--456.

\bibitem[Wil90]{wiles1990iwasawa}
\bysame, \emph{The {I}wasawa conjecture for totally real fields}, Annals of Mathematics \textbf{131} (1990), no.~3, 493--540.

\bibitem[Wil17]{chris2017}
Chris Williams, \emph{$p$-adic {$L$}-functions of {B}ianchi modular forms}, Proceedings of the London Mathematical Society \textbf{114} (2017), no.~4, 614--656.

\end{thebibliography}
\end{document}